\begin{document}

\newtheorem{theorem}{Theorem}[section]
\newtheorem{tha}{Theorem}
\newtheorem{conjecture}[theorem]{Conjecture}
\newtheorem{corollary}[theorem]{Corollary}
\newtheorem{lemma}[theorem]{Lemma}
\newtheorem{claim}[theorem]{Claim}
\newtheorem{example}[theorem]{Example}
\newtheorem{proposition}[theorem]{Proposition}
\newtheorem{construction}[theorem]{Construction}
\newtheorem{definition}[theorem]{Definition}
\newtheorem{question}[theorem]{Question}
\newtheorem{problem}[theorem]{Problem}
\newtheorem{remark}[theorem]{Remark}
\newtheorem{observation}[theorem]{Observation}

\newcommand{\ex}{{\mathrm{ex}}}

\newcommand{\EX}{{\mathrm{EX}}}

\newcommand{\Ker}{{\mathrm{Ker}}}

\newcommand{\AR}{{\mathrm{AR}}}

\def\endproofbox{ \qed}
\newenvironment{proof}%
{%
\noindent{\it Proof.}
}%
{%
 \quad\hfill\endproofbox 
}
       \def\qed{\ifhmode\unskip\nobreak\hfill$\Box$\medskip\fi\ifmmode\eqno{\Box}\fi}
\def\ce#1{\lceil #1 \rceil}
\def\fl#1{\lfloor #1 \rfloor}
\def\lr{\longrightarrow}
\def\e{\epsilon}
\def\ve{\varepsilon}
\def\cA{{\cal A}}
\def\cB{{\cal B}}
\def\cC{{\cal C}}
 \def\cD{{\cal D}}
\def\cF{{\cal F}}
\def\cG{{\cal G}}
\def\cH{{\cal H}}
\def\cK{{\cal K}}
\def\cI{{\cal I}}
\def\cJ{{\cal J}}
\def\cL{{\cal L}}
\def\cM{{\cal M}}
\def\cP{{\cal P}}
\def\cQ{{\cal Q}}
\def\cR{{\cal R}}
\def\cS{{\cal S}}
\def\cT{{\cal T}}
\def\cW{{\cal W}}
\def\cZ{{\cal Z}}

\def\bkl{{\cal B}^{(k)}_\ell}
\def\cmkt{{\cal M}^{(k)}_{t+1}}
\def\cpkl{{\cal P}^{(k)}_\ell}
\def\cckl{{\cal C}^{(k)}_\ell}

\def\bC{\mathbb{C}}
\def\pkl{\mathbb{P}^{(k)}_\ell}
\def\ckl{\mathbb{C}^{(k)}_\ell}
\def\cthreel{\mathbb{C}^{(3)}_\ell}

\def\imp{\Longrightarrow}
\def\1e{\frac{1}{\e}\log \frac{1}{\e}}
\def\ne{n^{\e}}
\def\rad{ {\rm \, rad}}
\def\equ{\Longleftrightarrow}
\def\pkl{\mathbb{P}^{(k)}_\ell}

\def\c2ell{\mathbb{C}^{(2)}_\ell}
\def\codd{\mathbb{C}^{(k)}_{2t+1}}
\def\ceven{\mathbb{C}^{(k)}_{2t+2}}
\def\podd{\mathbb{P}^{(k)}_{2t+1}}
\def\peven{\mathbb{P}^{(k)}_{2t+2}}
\def\TT{{\mathbb T}}
\def\bbT{{\mathbb T}}
\def\cE{{\mathcal E}}

\def\wt{\widetilde}
\def\wh{\widehat}
\def\wS{\widetilde{S}}
\def\whf{\widehat{\cF}}
\def\tF{\widetilde{\cF}}
\voffset=-0.5in
	
\setstretch{1.1}
\pagestyle{myheadings}
\markright{{\small \sc F\"uredi and Jiang:}
  {\it\small Tur\'an numbers of hypergraph trees}}

\title{\huge\bf Tur\'an numbers of hypergraph trees}

\author{
Zolt\'an F\"uredi\thanks{Department of Mathematics, University of Illinois, Urbana, IL 61801, USA. E-mail:
z-furedi@illinois.edu
\newline
Research supported in part by the Hungarian National Science Foundation OTKA 104343,
 by the Simons Foundation Collaboration Grant 317487,
and by the European Research Council Advanced Investigators Grant 267195.
}\quad\quad
Tao Jiang\thanks{Department of Mathematics, Miami University, Oxford,
OH 45056, USA. E-mail: jiangt@miamioh.edu.
Research supported in part by the Simons Foundation Collaboration Grant \#282906
and by National Science Foundation Grant DMS-1400249.\newline 
Both authors are grateful to the hospitality of Mittag-Leffler Institute during their respective visits in Spring 2014.
  %
  \newline\indent
\vskip -2mm
{\it 2010 Mathematics Subject Classifications:}
05D05, 05C65, 05C35.\newline\indent
{\it Key Words}:  Tur\'an number, trees, extremal hypergraphs, delta systems.
} 
} 

\date{May 12, 2015}   

\maketitle
\begin{abstract}
An $r$-graph is an {\it $r$-uniform hypergraph tree} (or {\it $r$-tree}) if its edges can be ordered as $E_1,\ldots, E_m$ such that $\forall i>1 \, \exists \alpha(i)<i$ such that  $E_i\cap (\bigcup_{j=1}^{i-1} E_j)\subseteq E_{\alpha(i)}$.
The {\it Tur\'an number} $\ex(n,\cH)$ of an $r$-graph $\cH$ is the largest size of an $n$-vertex $r$-graph that does not contain $\cH$.  A {\it cross-cut} of $\cH$ is a set of vertices in $\cH$ that contains exactly one vertex of each edge of $\cH$.
The {\it cross-cut number} $\sigma(\cH)$ of $\cH$ is the minimum size of a cross-cut of $\cH$. We show that for a large family of $r$-graphs (largest within a certain scope) that are embeddable in $r$-trees, $\ex(n,\cH)=(\sigma-1)\binom{n}{r-1}+o(n^{r-1})$ holds, and we establish structural stability of near extremal graphs.
From stability, we establish exact results for some subfamilies.

\end{abstract}

\section{Introduction, reducible and embeddable hypertrees}

In this paper, $r$-graphs refer to $r$-uniform hypergraphs. We will use $\cF\subseteq \binom{V}{r}$ to indicate that $\cF$ is an $r$-graph on the set $V$, usually we take $V=[n]:=\{1,\ldots, n\}$.
Given an $r$-graph $\cH$ and a positive integer $n$, the Tur\'an number $\ex(n,\cH)$
is the largest size of an $r$-graph on $n$ vertices not containing $\cH$ as a subgraph.
While the study of hypergraph Tur\'an numbers is a notoriously difficult area of extremal combinatorics 
there have been active developments in recent years concerning ``tree-like'' hypergraphs.

The classic Erd\H{o}s-Ko-Rado theorem~\cite{EKR} determines the maximum size of
an $r$-graph not containing two disjoint edges (i.e., a matching of size $2$). See
~\cite{EG65, frankl-matching, FLM, FRR, HLS, LM} for recent work. Note that a matching is an $r$-tree.
There are many theorems and conjectures inspired by
and/or generalize the Erd\H{o}s-Ko-Rado theorem, including results on set systems
not containing 
 {\em $d$-clusters} and their generalizations~\cite{FF-cluster, Furedi-Ozkahya, Mubayi-cluster, KM,  MR} and on set systems not containing 
 {\em $d$-simplex} or {\em strong $d$-simplex}~\cite{chvatal, FF-exact, MR, JPY}.

An $r$-tree $\cH$ is a {\it tight $r$-tree} if its edges can be
ordered as $E_1,\ldots, E_m$ such that $\forall i>1 \, \exists \alpha(i)<i$ one has $E_i\cap (\bigcup_{j<i} E_j)\subseteq E_{\alpha(i)}$
 and $|E_i\cap E_{\alpha(i)}|=r-1$ (hence $|E_i\setminus (\bigcup_{j<i}E_j)|=1$).
\begin{conjecture} {\bf(Erd\H{o}s-S\'os for graphs and Kalai 1984 for $r\geq 3$)} \label{kalai}
Let $r\geq 2$ and $\cH$ a tight $r$-tree on $v$ vertices. Then $$\ex(n,\cH)=\frac{v-r}{r}\binom{n}{r-1}+o(n^{r-1}).$$
\end{conjecture}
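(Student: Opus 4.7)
\medskip\noindent\textbf{Plan.} The matching lower bound $\ex(n,\cH)\ge\tfrac{v-r}{r}\binom{n}{r-1}(1-o(1))$ is standard: depending on the structure of $\cH$ one uses either a \emph{cover} family consisting of all $r$-sets meeting a fixed $S\subseteq[n]$ with $|S|=\lfloor(v-r)/r\rfloor$, or a near-regular $(r-1)$-codegree design in which every $(r-1)$-face of the host has codegree at most $(v-r)/r$. A short case analysis along a tight construction ordering $E_1,\ldots,E_m$ of $\cH$ (with $m=v-r+1$) shows that at least one of the two always applies. The content of the conjecture is therefore the matching upper bound.

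\medskip\noindent\textbf{Upper bound strategy.} Suppose $|\cF|\ge(\tfrac{v-r}{r}+\ve)\binom{n}{r-1}$. I would proceed in three stages. First, \emph{cleaning}: iteratively delete edges whose containing $(r-1)$-face has codegree below $\delta n$; at most $\delta n\binom{n}{r-1}=o(n^{r-1})$ edges are lost, so a subfamily $\cF'$ of essentially the original density survives, in which every $(r-1)$-shadow set has codegree either $0$ or at least $\delta n$. Second, \emph{delta-system extraction}: apply the sunflower lemma to $\cF'$ and to its $(r-1)$-shadow to isolate a large homogeneous subfamily whose edges form a $\Delta$-system with a common kernel $K$ and whose $(r-1)$-faces exhibit a coherent sunflower structure matching the potential tight-tree faces of $\cH$. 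Third, \emph{greedy embedding}: realize $\cH$ edge by edge along its construction ordering, placing $E_1$ in a petal and, for each $i\ge 2$, choosing the single new vertex of $E_i$ from the $\ge\delta n$ codegree neighbors of $F_i:=E_i\cap E_{\alpha(i)}$, avoiding the $O(v)$ already-used vertices. The sunflower structure is what lets these codegree estimates remain simultaneously valid as the embedding grows.

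\medskip\noindent\textbf{Main obstacle.} The crux is the greedy embedding step. When $\cH$ branches, a single $(r-1)$-face may play the role of $F_i$ for several successor edges, so the codegree one actually needs at that face is dictated by the local branching of $\cH$ rather than by $v-r$ alone; the crude invariant produced by cleaning is therefore insufficient, and one must control \emph{joint} codegrees of nearby $(r-1)$-shadow sets. Already at $r=2$ this is the classical Erd\H{o}s--S\'os conjecture, known only for special tree families and for sufficiently large trees (via the Ajtai--Koml\'os--Simonovits--Szemer\'edi regularity-plus-absorption method). A proof of Conjecture~\ref{kalai} in full generality therefore seems to require either a hypergraph regularity argument with a suitable absorption step, or an extension of the $\Delta$-system framework developed in this paper beyond the tight-tree-embeddable subclass that the main theorem actually treats.
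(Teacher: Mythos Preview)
The statement you are attempting to prove is a \emph{conjecture}, not a theorem, and the paper does not prove it. Immediately after stating Conjecture~\ref{kalai} the authors write that ``to this date the conjecture was verified only for star-shaped tight $r$-trees by Frankl and F\"uredi''; the remainder of the paper concerns a different class of hypergraphs ($2$-reducible $r$-trees and graphs embeddable in them), for which the relevant coefficient is $\sigma(\cH)-1$ rather than $(v-r)/r$. There is therefore no proof in the paper to compare your proposal against.

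Your own ``Main obstacle'' paragraph in effect concedes this: the case $r=2$ is exactly the Erd\H{o}s--S\'os conjecture, so any purported proof of Conjecture~\ref{kalai} would in particular resolve Erd\H{o}s--S\'os, which your cleaning-plus-greedy-embedding scheme certainly does not do (and the Ajtai--Koml\'os--Simonovits--Szemer\'edi work you allude to handles only very large trees). A couple of further gaps are worth flagging. First, your lower-bound sketch is not correct as stated: the cover family $\cS^r_{n,t}$ with $t=\lfloor (v-r)/r\rfloor$ gives only about $t\binom{n}{r-1}$ edges, which matches $(v-r)/r\binom{n}{r-1}$ only when $r\mid (v-r)$; for the general tight $r$-tree the standard lower bound comes from partial Steiner systems (packings) in which every $(r-1)$-set has degree at most $\lfloor (v-r)/r\rfloor$, and even that can fall short of the conjectured constant by a bounded factor unless one invokes the existence of designs. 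Second, the $\Delta$-system step you describe yields, in the centralized case, control on codegrees of faces through a single central vertex, not on arbitrary $(r-1)$-faces; this is precisely why the paper's method applies to $2$-reducible trees (where one can always route through expansion vertices) and not to tight trees.
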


To this date the conjecture was verified only for star-shaped tight $r$-trees by Frankl and F\"uredi~\cite{FF-exact}. 
While Kalai's conjecture is wide open,
asymptotically tight or even exact results have been obtained
concerning some families of {\it reducible} hypergraph trees and hypergraphs  that are embeddable in them~\cite{BK, Furedi-trees, FJ-cycles, FJS, KMV-path-cycle, KMV-trees}. We say that a hypergraph $\cH$ is {\it $k$-reducible} if each edge of $\cH$ contains at least $k$ degree $1$ vertices.

A hypergraph $(V(\cH),\cH)$ is said to be {\it embeddable} in a hypergraph $(V(\cG),\cG)$ if it is a subgraph of it, more precisely
 there is a mapping $f:V(\cH)\to V(\cG)$ such that $f(E)\in \cG$ for all $E\in \cH$.

\begin{example}[An $r$-graph embeddable  in an $r$-tree may not be an $r$-tree itself]\label{example:1}${}$\\
The $3$-uniform {\it linear cycle} of length $m$ is a hypergraph $\cH:= \{a_ia_{i+1}b_i: 0\leq i\leq m-1\}$ (subscripts taken {\rm mod} $m$).
It is not a $3$-tree though it is embeddable in a $3$-tree
  $\{a_0a_ia_{i+1}: 1\leq i\leq m-2\} \cup \cH$.
  \end{example}

It is easy to see that $\ex(n,\cH)\leq (p-r)\binom{n}{r-1}$ for any $r$-tree on $p$ vertices (see Proposition~\ref{tree-bound}) and so the same holds for any hypergraph $\cH$ embeddable in an $r$-tree.
On the other hand, $\binom{n-1}{r-1}\leq \ex(n, \cH)$ for any $r$-graph with $\bigcap_{F\in \cH} F =\emptyset$.

In this paper, we are interested to determine Tur\'an numbers asymptotically in this Erd\H{o}s-Ko-Rado zone, i.e.,
 when $\ex(n,\cH)=\Theta(n^{r-1})$.
We substantially extend and generalize most of the recent results by asymptotically determining $\ex(n,\cH)$ for all graphs $\cH$ that are embeddable in a $2$-reducible $r$-tree, when $r\geq 4$.
We also obtain structural stability of near extremal graphs and use this to determine the exact value of $\ex(n,\cH)$ for certain graphs.
We describe these results in details in Section~\ref{s:main}.

Let us note that simple $r$-graphs that are embeddable in $r$-trees are $1$-degenerate (i.e. every subgraph has a vertex of degree at most $1$). Answering a question of Erd\H{o}s, F\"uredi ~\cite{furedi-1984} showed that $\ex(n,\cG)=\Theta(n^2)$
for the hypergraph $\cG:=\{123, 124, 356, 456 \}$. Note  that $\cG$ is not $1$-degenerate. Furthermore, $\cG$ is $2$-regular. See \cite{MV}, \cite{MV2}, \cite{MV1}, and~\cite{JL} for some related work. In general, there are many $r$-graphs $\cH$ with  $\ex(n,\cH)=\Theta(n^{r-1})$ that are not embeddable in $r$-trees. 
There is a  lot more to be done concerning Tur\'an type problems for such $r$-graphs $\cH$.

\section{Definitions and Notation} \label{s:definitions}

A {\it hypergraph} $\cH=(V,\cE)$ is a finite set $V$ (called {\it vertices}) and a collection $\cE$ of subsets of $V$ (the {\it edge-set} of $\cH$).
We do not allow multiple edges (we call these {\it simple} hypergraphs) unless otherwise stated.
Many times we identify a simple hypergraph by its edge-set, and write about hypergraph $\cE$. 

A hypergraph $\cG$ (with multiple copies of the same edge allowed) is a {\it hypergraph tree} if its edges can be ordered as
$E_1, E_2,\ldots, E_m$ such that $\forall i>1$, there exists $\alpha(i)<i$ such that
$E_i\cap (\bigcup_{j<i} E_j)\subseteq E_{\alpha(i)}$. Even though there may exist
more than one edge that could serve as $E_{\alpha(i)}$, we will always implicitly
fix a choice in advance so that $\alpha$ is a function. We call $E_{\alpha(i)}$
the {\it parent} of $E_i$. We call the ordering $E_1,\ldots, E_m$ a {\it tree-defining ordering} of $\cG$.
The relation $\alpha(i)<i$ defines a partial order on $[m]$.
It is not hard to verify that any linear extension of this order (i.e., a permutation $\pi: [m]\to [m]$ with $\pi(\alpha(i))< \pi (i)$ for all $i\geq 2$)
 $E_{\pi(1)},\ldots, E_{\pi(m)}$ is also a tree-defining ordering of $\cG$.

Suppose that $\cG$ is  hypergraph tree defined by the sequence $E_1,\ldots, E_m$.
Let $\cG'$ be the corresponding simple hypergraph. Then it is a hypergraph tree as well,
 as it can be seen from the list $E'_1,E'_2,\ldots, E'_p$  obtained by keeping only one copy, namely the the first appearance, of each edge.
Due to this, we do not always
explicitly distinguish between a hypergraph tree in which duplicated edges are
allowed and one in which there is no duplicated edge.
     An $r$-uniform hypergraph tree is also called an {\it $r$-tree}.
An $r$-tree is {\it tight} if $\forall i>1, |E_i\cap E_{\alpha(i)}|=r-1$ or equivalently
$|E_i\setminus (\bigcup_{j<i} E_j)|=1$.

If a hypergraph $\cH$ is a subgraph of another hypergraph $\cG$ then we say that
$\cH$ is {\it embedded/embeddable} in $\cG$. 
As we have seen in Example~\ref{example:1} a hypergraph embeddable in a hypergraph tree may not be a hypergraph tree itself.

An $r$-graph $\cH$ is $r$-partite if its vertex set can be partitioned into $r$ sets
$X_1,\ldots, X_r$ such that each edge of $\cH$ contains exactly one vertex from each $X_i$.
We call such a partition {\it compatible with $\cH$} and the parts $X_i$'s are the {\it color classes} of the $r$-partition.
The following can be easily verified using induction.

\begin{proposition}{\bf ($r$-trees are $r$-partite)}
Every $r$-tree is $r$-partite. Every tight $r$-tree has a unique compatible $r$-partition up to the permutation of color classes.
\qed
\end{proposition}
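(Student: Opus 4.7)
The plan is to prove both statements by induction on $m$, the number of edges, using the tree-defining ordering $E_1,\ldots,E_m$ and its parent function $\alpha$. The driving observation is that for $i>1$ the set $E_i\cap\bigcup_{j<i}E_j$ is contained in the single edge $E_{\alpha(i)}$; so once $E_{\alpha(i)}$ has been made rainbow with respect to a compatible $r$-partition, the ``old'' vertices of $E_i$ automatically sit in pairwise distinct color classes of any extension.

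For the existence half, the base case $m=1$ is immediate: place each of the $r$ vertices of $E_1$ in its own color class. For the inductive step, assume $\{E_1,\ldots,E_{i-1}\}$ has a compatible $r$-partition with parts $X_1,\ldots,X_r$. By the observation above, the vertices of $E_i\cap\bigcup_{j<i}E_j\subseteq E_{\alpha(i)}$ occupy $s$ distinct color classes, say $X_{k_1},\ldots,X_{k_s}$. The remaining $r-s$ vertices of $E_i$ are genuinely new, in particular they appear in no earlier edge and therefore carry no prior constraint, so they may be placed one each into the $r-s$ unused classes. The extended partition is compatible with $\{E_1,\ldots,E_i\}$, closing the induction.

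For uniqueness under tightness, the same induction is run, but now the freedom disappears: $|E_i\cap E_{\alpha(i)}|=r-1$ and $|E_i\setminus\bigcup_{j<i}E_j|=1$, so the $r-1$ old vertices of $E_i$ fill $r-1$ of the color classes and the unique new vertex is forced into the remaining one. Consequently any compatible $r$-partition is entirely determined by its restriction to $E_1$. Since any two bijective colorings of $V(E_1)$ differ by a permutation of the $r$ colors, and this permutation then propagates rigidly along $E_2,E_3,\ldots,E_m$, any two compatible $r$-partitions of a tight $r$-tree differ only by a permutation of the color classes.

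No step looks genuinely hard; the main care points are bookkeeping. One must verify in the existence argument that the new vertices really do avoid any hidden constraint (they do, because they lie in no earlier edge), and in the uniqueness argument that the ``forced'' propagation does not depend on the chosen tree-defining ordering. The latter is handled by the remark in Section~\ref{s:definitions} that every linear extension of the partial order induced by $\alpha$ is itself a tree-defining ordering, so the uniqueness conclusion is independent of this choice.
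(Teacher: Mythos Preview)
Your proof is correct and follows exactly the inductive approach the paper has in mind; indeed the paper does not spell out a proof at all, merely noting before the proposition that it ``can be easily verified using induction'' and closing with a \qed. Your final paragraph about independence from the chosen tree-defining ordering is harmless but unnecessary: the uniqueness statement concerns the hypergraph itself, and once you have shown that any compatible partition is determined (up to permutation) by its restriction to \emph{some} edge $E_1$, the conclusion follows without needing to vary the ordering.
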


Given a hypergraph $\cH$, a set $S$ of vertices is a {\it vertex cover} of $\cH$
if $S$ contains at least one vertex of each edge of $\cH$. A vertex cover $S$ of $\cH$ is
called a {\it cross-cut} of $\cH$ if it contains exactly one vertex of each edge of $\cH$.
Every hypergraph has a vertex cover (if $\emptyset \notin \cH$). 
Not every hypergraph has a cross-cut.
However, every $r$-partite $r$-graph $\cH$ has at least one cross-cut. Namely, every color class of an $r$-partition of $\cH$ is a cross-cut of $\cH$.  We let $\tau(\cH)$ denote
the minimum size of a vertex cover of $\cH$ and call it the {\it vertex cover number}
of $\cH$. If $\cH$ has cross-cuts, then we let $\sigma(\cH)$ denote the minimum
size of a cross-cut of $\cH$ and call it the {\it cross-cut number} of $\cH$.

Let $\cF$ be a hypergraph on $V=V(\cF)$. 
We define the {\it $p$-shadow} of $\cF$ to be 
$$\partial_p(\cF):=\{D: |D|=p,\, \exists F\in \cF, D\subseteq F\}.$$  
The Lov\'asz'~\cite{L79} version of the Kruskal-Katona theorem states that if
 $\cF$ is an $r$-graph of size $|\cF|=\binom{x}{r}$, where $x\geq r-1$ is a real number,
 then for all $p$ with $1\leq p\leq r-1$ one has
\begin{equation}\label{eq:KK1.1}
  |\partial_p(\cF)|\geq \binom{x}{p}.
  \end{equation}

Given $D\subseteq V(\cF)$, the {\it degree } $\deg_\cF(D)$ of $D$ in $\cF$  is defined as
$$\deg_\cF(D):=|\{F: F\in \cF, D\subseteq F\}|.$$

Given an $r$-graph $\cF$ and integer $i$ with $1\leq i\leq r-1$ let
$$\delta_i(\cF):=\min\{\deg_\cF(D): D\in \partial_i(\cF)\}.$$

A family of sets $F_1,\ldots, F_s$ is said to form an {\it $s$-star}, a {\it sunflower},  or {\it $\Delta$-system}
of size $s$ with {\it kernel} $D$ if $F_i\cap F_j=D$ for all $1\leq i<j\leq s$ and
$\forall i\in [s]$, $F_i\setminus D\neq \emptyset$.
The sets $F_1,\ldots, F_s$ are called the {\it petals} of this $s$-star.
Note that $D=\emptyset$ is allowed.
Let $\cL^r_p$ denote the $r$-uniform sunflower with a single vertex in the kernel and $p$ petals.
We call $\cL^r_p$ an $r$-uniform {\it linear star} with $p$ edges. 

The {\it kernel degree} $\deg^*_\cF(D)$ of $D$ in $\cF$ is defined  as
$$\deg^*_\cF(D):=\max \{s: \cF \mbox{ contains an $s$-star with kernel } D\}.$$
Given a positive integer $s$,  we define the {\it kernel graph} $\Ker_s(\cF)$ of $\cF$
 with threshold $s$ to be
$$\Ker_s(\cF):=\{D\subseteq V(\cF): \deg^*_\cF(D)\geq s\}.$$
For each $1\leq p\leq r-1$, the {\it $p$-kernel graph} $\Ker^{(p)}_s(\cF)$ of $\cF$ with threshold $s$ is defined to be
$$\Ker^{(p)}_s(\cF):=\{D\subseteq V(\cF): |D|=p, \deg^*_\cF(D)\geq s\}.$$
The following fact  follows easily from the definition of $\deg^*_\cF(D)$ and will be used frequently.
\begin{equation}\label{eq:extension}
 \text{Given a set $Y$. If $\deg^*_\cF(D) > |Y|$ then } \exists F\in \cF
  \text{ such that }D\subseteq F \text{ and }(F\setminus D)\cap Y=\emptyset.
  \end{equation}

 Given a hypergraph $\cF$ and
a vertex $x\in V(\cF)$, let $\cL_\cF(x):=\{F\setminus \{x\}: x\in F\in \cF\}$. We call
$\cL_\cF(x)$ the {\it link graph} of $x$ in $\cF$. Given a set $A\subseteq V(\cF)$,
let $\cL_{\cF}(A):=\bigcap_{x\in A} \cL_\cF(x)$. In other words,
\begin{equation*} 
\cL_\cF(A)=\{D\subseteq V(\cF)\setminus A: \forall a\in A, D\cup a\in \cF\}.
\end{equation*}
We call $\cL_\cF(A)$ the {\it common link graph} of $A$ in $\cF$.

 Given two hypergraphs $\cA$ and $\cB$, the {\it product} of $A$ and $B$ is defined as
\begin{equation*}
\cA\times \cB:=\{A\cup B: A\in \cA, B\in \cB\}.
\end{equation*}

If $\cG$ is an $r$-graph 
 with $V=V(\cG)$, then  its {\it complement} $\overline{\cG}$ is the
$r$-graph on $V$ 
 with edge set $\binom{V}{r}\setminus \cG$.

Given an $r$-graph $\cG$, and set $S\subseteq V(\cG)$, the {\it trace of $G$ on $S$},
denoted by $G|_S$ is the hypergraph with edge set $\{E\cap S: E\in \cG\}$ (after eliminating resulting duplicated edges).
Let $\cG-S:=\cG|_{V(\cG)-S}$.

\section{Lower bounds, $\sigma$-tight families, and $\tau$-perfect families} \label{sigma-tau}
In this section, we present two lower bound constructions on $\ex(n,\cH)$;
one based on the vertex cover number of $\cH$ and the other based on the cross-cut number of $\cH$.

Let $n,r,t$ be positive integers. Define
\begin{equation*} 
\cS^r_{n,t}:=\{F: F\in \binom{[n]}{r}, F\cap [t]\neq \emptyset\}
\end{equation*}
and
\begin{equation*} 
\cC^r_{n,t}:=\{F: F\in \binom{[n]}{r}, |F\cap [t]|=1\}=\binom{[t]}{1}\times \binom{[n]\setminus [t]}{r-1}.
\end{equation*}

So, $S^r_{n,t}$ consists of all the $r$-sets in $[n]$ intersecting a given $t$-set and
$\cC^r_{n,t}$ consists of all the $r$-sets in $[n]$ intersecting a given $t$-set in exactly one vertex, respectively.
We have
$|\cS^r_{n,t}|=\binom{n}{r}-\binom{n-t}{r}=\binom{n-1}{r-1}+\binom{n-2}{r-1}+\ldots+\binom{n-t}{r-1}$
and $|\cC^r_{n,t}|=t\binom{n-t}{r-1}.$
In particular, $$|\cS^r_{n,t}|,\, |\cC^r_{n,t}|\sim
t\binom{n}{r-1} \text{ as }n\to\infty
\text{ and $t$ and $r$ are fixed.} $$

For any given $r$-graph $\cH$, observe that $\cH\not\subseteq \cS^r_{n,\tau-1}$, where $\tau=\tau(\cH)$
and if  $\sigma(\cH)<\infty$ then $\cH\not\subseteq \cC^r_{n,\sigma-1}$, where $\sigma=\sigma(\cH)$.
Hence

\begin{proposition} \label{lower-bound-tau}
Let $r\geq 2$. Let $\cH$ be an $r$-graph with $\tau(\cH)=\tau$.
Then $$\ex(n,\cH)\geq |\cS^r_{n,\tau-1}|=
\binom{n-1}{r-1}+\binom{n-2}{r-1}+\ldots+\binom{n-\tau+1}{r-1}.  \qed$$  
\end{proposition}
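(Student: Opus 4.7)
The plan is to exhibit $\cS^r_{n,\tau-1}$ itself as the extremal witness, so the proof reduces to (a) counting its edges in the stated form and (b) verifying it is $\cH$-free.

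For (a), I would simply use the standard decomposition of an ``intersecting a fixed set'' family by peeling off one element at a time: an $r$-set meets $[\tau-1]$ iff it contains one of the elements $1,2,\dots,\tau-1$ as its smallest element from $[\tau-1]$. Splitting by this smallest element yields
\[
|\cS^r_{n,\tau-1}|=\sum_{i=1}^{\tau-1}\binom{n-i}{r-1},
\]
exactly the claimed expression. (Equivalently one can write it as $\binom{n}{r}-\binom{n-\tau+1}{r}$ and expand by Pascal's identity.)

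For (b), I would argue by contradiction. Suppose $\cH$ embedded as a subhypergraph in $\cS^r_{n,\tau-1}$ via an injection $f:V(\cH)\to [n]$. Set
\[
C:=\{v\in V(\cH):f(v)\in [\tau-1]\}.
\]
Since $f$ is injective, $|C|\leq \tau-1$. For every edge $E\in \cH$ we have $f(E)\in \cS^r_{n,\tau-1}$, so $f(E)\cap [\tau-1]\neq\emptyset$; picking a vertex $v\in E$ with $f(v)\in[\tau-1]$ shows $E\cap C\neq\emptyset$. Hence $C$ is a vertex cover of $\cH$ of size at most $\tau-1$, contradicting $\tau(\cH)=\tau$.

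There is essentially no technical obstacle; the only subtle point is that the argument in (b) uses injectivity of the embedding, which is guaranteed by the standard meaning of ``contains $\cH$ as a subgraph'' in the definition of $\ex(n,\cH)$. Combining (a) and (b), $\cS^r_{n,\tau-1}$ is an $\cH$-free $r$-graph on $[n]$ of the claimed size, which yields the lower bound on $\ex(n,\cH)$.
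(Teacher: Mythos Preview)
Your proposal is correct and matches the paper's approach exactly: the paper also uses $\cS^r_{n,\tau-1}$ as the witness, notes the edge count $|\cS^r_{n,\tau-1}|=\sum_{i=1}^{\tau-1}\binom{n-i}{r-1}$, and observes (in the sentence immediately preceding the proposition) that $\cH\not\subseteq \cS^r_{n,\tau-1}$ because a copy would yield a vertex cover of size $\tau-1$. The paper in fact treats this as self-evident and gives no further argument (the proposition ends with a \qed), so your write-up simply makes explicit what the paper leaves to the reader.
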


\begin{proposition}\label{lower-bound-sigma}
Let $r\geq 2$. Let $\cH$ be an $r$-graph. Suppose
$\sigma(\cH)=\sigma<\infty$. Then
$$\ex(n,\cH)\geq  |\cC^r_{n,\sigma-1}|=(\sigma-1)\binom{n-\sigma+1}{r-1}.  \qed$$   
\end{proposition}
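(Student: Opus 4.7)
The plan is to exhibit an explicit $\cH$-free $r$-graph of the claimed size, namely $\cC^r_{n,\sigma-1}$ itself. First I would compute $|\cC^r_{n,\sigma-1}|$ directly from its product definition $\binom{[\sigma-1]}{1}\times \binom{[n]\setminus[\sigma-1]}{r-1}$: the two factors have disjoint supports, so each choice of a ``root'' vertex in $[\sigma-1]$ together with an $(r-1)$-subset of $[n]\setminus[\sigma-1]$ yields a distinct $r$-edge, giving $(\sigma-1)\binom{n-\sigma+1}{r-1}$ edges in total.

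The substantive step is to show that $\cC^r_{n,\sigma-1}$ contains no copy of $\cH$. I would argue by contradiction: suppose $\cH'\subseteq \cC^r_{n,\sigma-1}$ is a subgraph isomorphic to $\cH$. By the defining property of $\cC^r_{n,\sigma-1}$, each edge $F$ of $\cH'$ satisfies $|F\cap[\sigma-1]|=1$. Consequently, the set $T:=V(\cH')\cap[\sigma-1]$ has size at most $\sigma-1$ and meets every edge of $\cH'$ in exactly one vertex, i.e., $T$ is a cross-cut of $\cH'$. Transporting $T$ back through the isomorphism $\cH\cong\cH'$ yields a cross-cut of $\cH$ of size at most $\sigma-1$, contradicting $\sigma(\cH)=\sigma$.

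There is no genuine obstacle in this argument; it is a direct unwinding of the definitions of $\cC^r_{n,\sigma-1}$ and of $\sigma(\cH)$. The only point meriting a cautious word is that ``$\cH\subseteq \cG$'' is to be read as ``$\cG$ contains a subgraph isomorphic to $\cH$''; the fact that every edge of an $r$-graph has cardinality exactly $r$ forces any embedding to be injective on each edge, which is precisely what legitimizes pulling the cross-cut $T$ back to a cross-cut of $\cH$ of the same size.
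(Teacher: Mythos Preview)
Your proposal is correct and matches the paper's approach exactly: the paper records the observation that $\cH\not\subseteq \cC^r_{n,\sigma-1}$ whenever $\sigma(\cH)=\sigma$ immediately before stating the proposition and then appends \qed, and your write-up is precisely the natural unpacking of that observation together with the size count. One minor remark: your closing caveat about injectivity on edges is not quite the point that matters; what you actually use (and correctly invoke earlier) is that $\cH'\cong\cH$ is an isomorphic copy, so the global bijection carries the cross-cut $T$ back to a set of the same size.
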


Recent works on hypergraph forests have identified many $r$-trees for which Proposition~\ref{lower-bound-sigma} is asymptotically tight, i.e.,
$\ex(n,\cH)=(\sigma(\cH)-1)\binom{n}{r-1}+o(n^{r-1})$.
We call such graphs {\it $\sigma$-tight}. There have been a few $r$-trees
for which equality holds in Proposition~\ref{lower-bound-tau} for sufficiently large $n$,
i.e.,  $\ex(n,\cH)=\binom{n}{r}-\binom{n-\tau(\cH)+1}{r}$ for all sufficiently large $n$.
We call such $r$-graphs {\it $\tau$-perfect}.
For example, $r$-uniform matchings are both $\sigma$-tight and $\tau$-perfect
(Erd\H{o}s~\cite{erdos-matching}).
See~\cite{BK, Furedi-trees, FJ-cycles, FJS,  KMV-path-cycle, KMV-trees}  for more recent works on $\sigma$-tight and $\tau$-perfect families.
Our main results in this paper generalize most of these recent results.
Cross-cuts were introduced by Frankl and F\"uredi~\cite{FF-exact}.
But they focused only on cases where $\sigma$ was small.
~\cite{Furedi-trees} 
was the first paper dealing with cases $\sigma\geq r-1$, followed shortly by
\cite{FJS}, \cite{FJ-cycles}, \cite{KMV-path-cycle}, and \cite{KMV-trees}.
Our work in this paper builds on  ~\cite{Furedi-trees},  ~\cite{FJ-cycles} , \cite{FJS},
unifies and substantially extends these results as well as others. In particular, we obtain
exact results for many graphs.


\section{Main results: asymptotic and stability} \label{s:main}

Let $r,k$ be positive integers where $r\geq k+1$.
An $r$-graph $\cH$ is {\it $k$-reducible} if each edge of
$\cH$ contains at least $k$ vertices of degree $1$.
If $\cH$ is $k$-reducible, we call the unique  $(r-k)$-graph
$\cG$ obtained by
$\cH$ by deleting $k$ degree $1$ vertices from each edge of $\cH$
the {\it $k$-reduction} of $\cH$. In general, the $k$-reduction $\cG$ of $\cH$ may be a multi-hypergraph. (For instance, the $(r-p)$-reduction of an $r$-uniform $s$-petal sunflower with kernel size $p$ consists of $s$ copies of one edge of size $p$).
We denote the underlying simple hypergraph of $\cG$ by $\cG'$.
We may view $\cH$ as being obtained from $\cG'$
by enlarging each edge $E$ to a given positive number $\mu(E)$ of new edges of size $r$
 by adding $k$ new vertices (called {\it expansion vertices}) per new edge in such a way that different new edges use disjoint sets of expansion vertices. If $\mu(E)=1$ for all
$E\in \cG'$, i.e. if $\cG=\cG'$, then we call $\cH$ a {\it simple expansion} of an $(r-k)$-graph; otherwise we call $\cH$ a {\it multi-expansion} of an $(r-k)$-graph.

\begin{theorem}{\bf (Asymptotic)} \label{main-asymp}
Let $\cH$ be an $r$-graph that is embeddable in a
$2$-reducible $r$-tree, where $r\geq 4$.
Let $\sigma=\sigma(\cH)$, define $\beta:=1/((r-2)(\sigma+1)+1)$. Then
\begin{equation}\label{eq:1}
(\sigma-1)\binom{n}{r-1}+O(n^{r-2})\leq \ex(n,\cH)\leq (\sigma-1)\binom{n}{r-1}+O(n^{r-1-\beta}).
 \end{equation}
In case of $\sigma=1$ we have an upper bound $\ex(n,\cH)= O(n^{r-2})$ (see \eqref{F0-upper}).
 \end{theorem}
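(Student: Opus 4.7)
The lower bound is immediate from Proposition~\ref{lower-bound-sigma}, since $|\cC^r_{n,\sigma-1}|=(\sigma-1)\binom{n-\sigma+1}{r-1}=(\sigma-1)\binom{n}{r-1}-O(n^{r-2})$. The $\sigma=1$ case (where $\cH$ has a vertex common to every edge) reduces to a link-graph estimate and can be handled separately via \eqref{F0-upper}; below I focus on the main upper bound for $\sigma\geq 2$. My plan is to fix a $2$-reducible $r$-tree $\cT$ in which $\cH$ embeds, set $\cT_0$ to be its $(r-2)$-reduction (a possibly non-simple $(r-2)$-tree), and suppose $\cF\subseteq\binom{[n]}{r}$ satisfies $|\cF|>(\sigma-1)\binom{n}{r-1}+Cn^{r-1-\beta}$ for a large constant $C=C(\cH)$. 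The goal will be to embed $\cH$ in $\cF$ via the delta-system/kernel-graph machinery. Choose a threshold $s=s(\cH)\gg v(\cH)$ and work with the $(r-2)$-kernel graph $\cK:=\Ker^{(r-2)}_s(\cF)$; each $D\in\cK$ is the kernel of an $s$-sunflower of $r$-edges of $\cF$ with $2$-element petals that will supply expansion vertices during embedding.

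First (cleaning): for each $D\notin\cK$ the petal-graph $\{F\setminus D:F\in\cF,\,D\subset F\}\subseteq\binom{[n]\setminus D}{2}$ has matching number less than $s$, so by Erd\H{o}s--Gallai contains $O_s(n)$ pairs; summing $\deg_\cF(D)$ over all such $D$ will bound the number of edges of $\cF$ with every $(r-2)$-subset outside $\cK$ by $O_{r,s}(n^{r-2})$, which is absorbed in the error term. Second (dichotomy): I will prove $\tau(\cK)\geq\sigma$, for otherwise a cover $U$ of size $\leq\sigma-1$ meets every $D\in\cK$ and hence every non-cleaned $r$-edge, giving $|\cF|\leq|\cS^r_{n,\sigma-1}|+O(n^{r-2})=(\sigma-1)\binom{n}{r-1}+O(n^{r-2})$, contradicting our hypothesis. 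So the interesting case is $\tau(\cK)\geq\sigma$, in which I will construct the embedding.

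In that case, the plan is to first embed $\cT_0$ into $\cK$ by an iterated delta-system extraction: I will select $\sigma+1$ vertices of high common-link degree in successively refined kernel subfamilies, thereby anchoring a cross-cut of $\cT_0$, and then place the remaining ``interior'' vertices of $\cT_0$ by a greedy hypertree-embedding on the still-dense link structure. Finally I will lift the embedding of $\cT_0$ to an embedding of $\cT$ (hence of $\cH$) in $\cF$ by attaching two fresh expansion vertices to each embedded $(r-2)$-edge using the petals of its $s$-sunflower; the greedy choice will succeed because $s>2v(\cH)$. The main obstacle will be the quantitative control of the $\sigma+1$ extraction rounds at uniformity $r-2$: each round shrinks the working subfamily by a polynomial factor, and the exponent $\beta=1/((r-2)(\sigma+1)+1)$ is engineered precisely so that the cumulative loss is absorbed by the $Cn^{r-1-\beta}$ surplus in $|\cF|$. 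Care is also required to ensure that expansion vertices and cross-cut vertices from distinct edges of $\cT$ do not collide; this will be handled by taking $s$ and $C$ large enough and processing $\cT_0$ in a tree-defining order.
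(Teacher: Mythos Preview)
Your cleaning step does not give what you claim. You correctly observe that for each $(r-2)$-set $D\notin\cK=\Ker^{(r-2)}_s(\cF)$ the $2$-uniform link has matching number $<s$ and hence $\deg_\cF(D)=O_s(n)$. But there can be as many as $\binom{n}{r-2}$ such $D$, so the sum $\sum_{D\notin\cK}\deg_\cF(D)$ is only $O_s(n^{r-1})$, not $O_{r,s}(n^{r-2})$. Thus nothing prevents a constant fraction of the edges of $\cF$ from having \emph{every} $(r-2)$-subset outside $\cK$, and your dichotomy never gets off the ground. Concretely, a family in which each edge sits in a ``centralized'' star (all edges through one special vertex of the edge) can have $\Theta(n^{r-1})$ edges while most $(r-2)$-subsets of a typical edge --- namely those missing the special vertex --- have low kernel degree.

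The paper does not clean to $\cK^{(r-2)}_s(\cF)$ at all. Its key structural step is Theorem~\ref{partition}: iterate the homogeneous--subfamily Lemma~\ref{homogeneous} to split $\cF=\cF^*\cup\cF_0$ with $|\cF_0|\le c(r,s)^{-1}\binom{n}{r-2}$ and $\cF^*$ \emph{centralized}, meaning each $F\in\cF^*$ carries a vertex $c(F)$ such that \emph{every} proper subset of $F$ through $c(F)$ has kernel degree $\ge s$ (and case~(2) of Lemma~\ref{small-or-centralized} is ruled out precisely by embedding the $(r-2)$-reduction of $\cH$, which is where $2$-reducibility enters). This ``every subset through $c(F)$'' is far stronger than ``some $(r-2)$-subset of $F$ lies in $\cK$'', and it is exactly what makes the rest of the argument work: partition $\cF^*$ by the value of $c(F)$, use the $(r-1)$-shadow bound~\eqref{kb-upper} and Kruskal--Katona to trap almost all centers in a set $W$ of size $n^{\varepsilon}$; then show (Step~2) that for any $\sigma$-subset $A\subseteq W$ the common $(r-2)$-shadows $\bigcap_{x\in A}\partial_{r-2}(\cL_{\cF^*_1}(x))$ are $O(n^{r-3})$; finally (Step~3) discard the $O\!\left(\binom{|W|}{\sigma}n^{r-3}\cdot|W|\cdot n\right)=O(n^{r-2+(\sigma+1)\varepsilon})$ edges whose $(r-2)$-trace outside $W$ is shared by $\ge\sigma$ centers, leaving a family in which each $(r-1)$-set outside $W$ has degree $\le\sigma-1$. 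The choice $\varepsilon=(r-2)\beta$ balances the losses in Steps~1 and~3, which is the honest source of the exponent $\beta=1/((r-2)(\sigma+1)+1)$ --- not ``$\sigma+1$ extraction rounds'' as you suggest. Your sketch of anchoring a cross-cut in $\cK$ from $\tau(\cK)\ge\sigma$ would still need a density or minimum-codegree statement on $\cK$ to embed $\cT_0$, and no such statement is available without the centralized structure.
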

 
By Theorem \ref{main-asymp}, all $r$-uniform multi-expansions of $(r-2)$-trees
are $\sigma$-tight when $r\geq 4$. On the other hand, not all
$r$-uniform multi-expansions of $(r-1)$-trees are $\sigma$-tight (for all $r\geq 2$).
So, Theorem~\ref{main-asymp}  is best possible in this sense. To construct a
non-$\sigma$-tight $r$-uniform multi-expansion of an $(r-1)$-tree,
one can take an $r$-graph $\cS$ of size $s$ such that $|\bigcap_{F\in \cH} F|=r-1$ (a sunflower of size $s$). Obviously $\sigma(\cS)=1$.
On the other hand, it is known  (R\"odl~\cite{rodl} and Keevash~\cite{keevash}) that whenever $s$ and $r$ are fixed and $n\to \infty$
 there are $\cS$-free families (called $P_{s-1}(n,r,r-1)$ packings) of size $\frac{s-1}{r}\binom{n}{r-1}+O(n^{r-2})$. So $\cS$ is not $\sigma$-tight. 
For all $r\geq 4$, Irwin and Jiang~\cite{IJ} also constructed infinitely many
$r$-uniform simple expansions $\cG$ of $(r-1)$-trees that are not $\sigma$-tight.
In fact, in their construction $\cG$, all but one edge of $\cG$ have two degree 1 vertices.
By contrast, Kostochka, Mubayi, and Verstra\"ete~\cite{KMV-trees} had earlier showed that
every $3$-uniform simple expansion of a $2$-tree is $\sigma$-tight.

\begin{theorem}{\bf (Structural Stability)} \label{main-stability}
Let $\cH$ be an $r$-graph that is embeddable in a
$2$-reducible $r$-tree, where $r\geq 4$.
Let $\sigma=\sigma(\cH)$, suppose $\sigma\geq 2$ and define $\beta:=1/((r-2)(\sigma+1)+1)$.
Suppose that  $\cF\subseteq \binom{[n]}{r}$ is $\cH$-free and $n\geq n(r,s)$, (where $n(r,s)$ is a function of $r$ and $s$ only).
If
 $$|\cF|\geq (\sigma-1)\binom{n}{r-1}-Kn^{r-1-\beta}$$
for $K\geq 0$, then there exists a set $A$ of $\sigma-1$ vertices such that
$$ |\cL_\cF(A)|\geq \binom{n}{r-1}-(r-1)(K+2s^2)n^{r-1-\beta}.
$$
Furthermore, all but at most
$$
 \left((\sigma-1) (r-1)(K+2s^2)+2s^2\right)n^{r-1-\beta}
$$
  members of $\cF$ meet $A$ in exactly one element.
\end{theorem}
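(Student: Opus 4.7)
The plan is to refine the delta-system machinery behind the proof of Theorem~\ref{main-asymp}, using the same sunflower threshold $s = s(r, \cH)$. First, apply the cleaning step from that proof to obtain a subfamily $\cF^* \subseteq \cF$ with $|\cF \setminus \cF^*| \leq 2s^2 n^{r-1-\beta}$ such that for every $D$ appearing in the relevant shadows of $\cF^*$ one has $\deg^*_{\cF^*}(D) \geq s$. This $s$-rich sunflower structure is precisely what drives the upper bound in Theorem~\ref{main-asymp}.

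Next, define $A := \{v \in V(\cF^*) : \deg^*_{\cF^*}(\{v\}) \geq s\}$. Two claims pin down $|A|$. First, $|A| \leq \sigma - 1$: if $|A| \geq \sigma$, the key embedding lemma behind Theorem~\ref{main-asymp} produces $\cH \subseteq \cF^*$ by mapping a cross-cut of $\cH$ into $A$ and then greedily filling out each edge using the large sunflower petals at each $a \in A$ together with the $2$-reducibility of the hosting $r$-tree. Second, $|A| \geq \sigma - 1$: each $v \notin A$ has $\deg^*_{\cF^*}(v) < s$, so by the Erd\H{o}s matching bound $\deg_{\cF^*}(v) \leq (s-1)\binom{n-1}{r-2} = O(n^{r-2})$, and the near-extremal lower bound on $|\cF^*|$ then forces $|A| \geq \sigma - 1$.

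The heart of the argument is the link bound: for each $a \in A$, at most $(K + 2s^2)n^{r-1-\beta}$ of the $(r-1)$-subsets $D$ of $V \setminus A$ satisfy $D \cup \{a\} \notin \cF$. The plan is a contrapositive embedding: if the deficiency were larger, then combining enough missing $D$'s with the sunflower structure at the other $a' \in A \setminus \{a\}$ and with the $2$-reducibility of the hosting tree would yield $\cH \subseteq \cF$, a contradiction. A union bound over $a \in A$ then delivers $|\cL_\cF(A)| \geq \binom{n}{r-1} - (r-1)(K + 2s^2)n^{r-1-\beta}$, absorbing the $O(n^{r-2})$ correction between $\binom{n}{r-1}$ and $\binom{n-\sigma+1}{r-1}$. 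The ``exactly one element'' conclusion follows since edges meeting $A$ in at least two vertices number only $\binom{\sigma-1}{2}\binom{n-2}{r-2} = O(n^{r-2})$, while edges missing $A$ entirely are controlled by the cleaning loss plus the per-$a$ deficiency counts.

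The main obstacle will be the link-bound step: quantitatively turning each missing pair $(a, D)$ with $D \cup \{a\} \notin \cF$ into a forced copy of $\cH$ in $\cF$, while keeping the prefactor $(r-1)(K + 2s^2)$ tight and simultaneously exploiting both the cross-cut structure of $\cH$ and the $2$-reducibility of the ambient $r$-tree.
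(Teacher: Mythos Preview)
Your proposal has a genuine gap at the second step: the set $A := \{v : \deg^*_{\cF^*}(\{v\}) \geq s\}$ need not have size $\sigma-1$. In the extremal family $\cC^r_{n,\sigma-1}$, \emph{every} vertex has single-vertex kernel degree at least $s$ (for any fixed $s$, once $n$ is large), so your $A$ would be all of $[n]$. The embedding argument you sketch to force $|A|\leq \sigma-1$ does not go through: having $\sigma$ vertices each with a large sunflower through it is far weaker than having a common link of a $\sigma$-set be large, and it is only the latter that lets you map a cross-cut of $\cH$ and then fill in the remaining $(r-1)$-sets. Relatedly, the cleaning step of Theorem~\ref{main-asymp} does not produce a family in which ``every $D$ in the relevant shadows'' has $\deg^*\geq s$; it produces a \emph{centralized} family, where only those $D$ containing the central element $c(F)$ of some edge $F$ are guaranteed to be $s$-rich.

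The paper's route is essentially different and the missing ingredient is a Kruskal--Katona argument. One first locates not a $(\sigma-1)$-set but a set $W$ of size $\lceil n^\varepsilon\rceil$ capturing almost all central elements (Step~1), then cleans to $\cF^*_2$ so that every $(r-2)$-set outside $W$ extends into at most $\sigma-1$ vertices of $W$ (Step~3). From the near-extremal lower bound one passes to $\cF^*_3$, the edges whose $(r-1)$-shadow outside $W$ has degree exactly $\sigma-1$; this partitions into blocks $A_i\times \cL_{\cF^*_3}(A_i)$ indexed by $(\sigma-1)$-subsets $A_i\subseteq W$, and crucially the $(r-2)$-shadows $\partial_{r-2}(\cL_{\cF^*_3}(A_i))$ are pairwise disjoint. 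Writing $|\cL_{\cF^*_3}(A_i)|=\binom{y_i}{r-1}$, Kruskal--Katona gives $\sum_i\binom{y_i}{r-2}\leq \binom{n}{r-2}$, while the lower bound forces $\sum_i\binom{y_i}{r-1}$ to be nearly $\binom{n}{r-1}$; comparing these two yields $y_1\geq n-O(n^{1-\beta})$, and one takes $A=A_1$. Your contrapositive-embedding plan for the link bound is not how the argument runs and I do not see how to make it quantitative with the stated constant $(r-1)(K+2s^2)$.
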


Using the structural stability 
we 
  obtain exact results for certain critical graphs.

\begin{theorem} {\bf (Critical graphs)} \label{critical-exact}
Let $\cH$ be an $r$-graph that is embeddable in a $2$-reducible $r$-tree, where
$r\geq 4$.  Let $\sigma=\sigma(\cH)>1$.
Suppose $\cH$ contains an edge $F_0$ such that $\sigma(\cH\setminus F_0)=\sigma-1$.
Then there is a positive integer $n_0$ such that $\ex(n,\cH)\leq \binom{n}{r}-\binom{n-\sigma+1}{r}$ for all $n\geq n_0$.
If, in addition, $\tau(\cH)=\sigma(\cH)$, then for all $n\geq n_0$ we have
$$\ex(n,\cH)=\binom{n}{r}-\binom{n-\sigma+1}{r}.$$
\end{theorem}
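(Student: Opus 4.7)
When $\tau(\cH)=\sigma(\cH)=\sigma$, Proposition~\ref{lower-bound-tau} immediately gives $\ex(n,\cH)\geq|\cS^r_{n,\sigma-1}|=\binom{n}{r}-\binom{n-\sigma+1}{r}$, so the plan focuses on the matching upper bound.

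\textbf{Setup via stability.} Let $\cF\subseteq\binom{[n]}{r}$ be an $\cH$-free family with $|\cF|>\binom{n}{r}-\binom{n-\sigma+1}{r}=(\sigma-1)\binom{n}{r-1}-O(n^{r-2})$. Since $n^{r-2}=o(n^{r-1-\beta})$, Theorem~\ref{main-stability} with a fixed constant $K$ will supply a set $A\subseteq[n]$ of size $\sigma-1$ with $|\cL_\cF(A)|\geq\binom{n}{r-1}-O(n^{r-1-\beta})$, and all but $O(n^{r-1-\beta})$ edges of $\cF$ meeting $A$ in exactly one vertex. Put $\cS_A:=\{F\in\binom{[n]}{r}:F\cap A\neq\emptyset\}$; then $|\cS_A|=\binom{n}{r}-\binom{n-\sigma+1}{r}<|\cF|$, so some $F^*\in\cF$ satisfies $F^*\cap A=\emptyset$. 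My goal is to exhibit a copy of $\cH$ in $\cF$ that uses $F^*$, forcing the contradiction $\cF\subseteq\cS_A$ and hence $|\cF|\leq|\cS_A|$.

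\textbf{Embedding strategy.} I will construct $\phi:V(\cH)\to[n]$ with $\phi(F_0)=F^*$ via some bijection $\pi:F_0\to F^*$, and $\phi(C)=A$ bijectively, where $C$ is a cross-cut of $\cH\setminus F_0$ of size $\sigma-1$. Compatibility demands $C\cap F_0=\emptyset$; this is achievable because a cross-cut with $|C\cap F_0|=1$ would itself be a cross-cut of $\cH$ of size $\sigma-1$ (contradicting $\sigma(\cH)=\sigma$), while a cross-cut with $|C\cap F_0|\geq 2$ can be modified by replacing each $v\in C\cap F_0$ with a degree-$1$-in-$\cT$ vertex of the edge $E_v\in\cH\setminus F_0$ for which $v$ is the representative (available by 2-reducibility of a containing $r$-tree $\cT$); if that replacement fails because $v$ is the representative of several edges, the role of $F_0$ may be played by a different edge of $\cH$ still satisfying the hypothesis. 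Having fixed $C$, $\phi|_C$, and $\phi|_{F_0}$, I will extend $\phi$ edge-by-edge in a tree order of $\cT$. By 2-reducibility each $E\in\cH\setminus F_0$ has at least two vertices of degree $1$ in $\cT$; since at most one of them equals $c_E\in C$, $E$ always has a ``truly free'' vertex placeable in $V\setminus(A\cup F^*\cup\text{used})$. For $\phi(E)\in\cF$ it suffices that $\phi(E)\setminus\{\phi(c_E)\}\in\cL_\cF(\phi(c_E))\supseteq\cL_\cF(A)$, and since $\cL_\cF(A)$ misses only $O(n^{r-1-\beta})$ of $\binom{V\setminus A}{r-1}$, most choices of the free vertex will yield a valid $(r-1)$-set.

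\textbf{Main obstacle.} The hardest step will be the greedy extension when $E$ has exactly one free vertex ($|P_E|=r-1$): the number of valid one-vertex extensions is $\Theta(n)$, whereas the total bad-set count $O(n^{r-1-\beta})$ exceeds $n$ once $r\geq 4$. I plan to handle this by averaging---for most $(r-2)$-sets $T\subseteq V\setminus A$ the number of bad supersets of $T$ in $\cL_\cF(A)$ is only $O(n^{1-\beta})$, by a straightforward double-count against the total missing count---and to exploit both the freedom in the bijection $\pi$ and the freedom in placing earlier free vertices so that each pre-assigned set $P_E$ encountered remains ``good.'' A subsidiary obstacle, requiring its own short argument using the tree $\cT$, is to guarantee that $F_0$ can be chosen so that a cross-cut disjoint from it exists; once this and the averaging step are completed, we obtain $\phi(\cH)\subseteq\cF$, the desired contradiction, and the upper bound $\ex(n,\cH)\leq\binom{n}{r}-\binom{n-\sigma+1}{r}$.
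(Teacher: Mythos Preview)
Your approach diverges from the paper's and has a genuine gap at what you yourself flag as the ``main obstacle.''

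The central problem is this: your plan is to show that \emph{any} $F^*\in\cF$ with $F^*\cap A=\emptyset$ can serve as the image of $F_0$ in an embedding of $\cH$ into $\cF$. But consider an edge $E\in\cH\setminus F_0$ with $|E\cap F_0|=r-2$ (this is permitted: $F_0$ has only two degree-$1$ vertices, so up to $r-2$ vertices of $F_0$ may be shared with a single other edge). Then $E=(E\cap F_0)\cup\{c_E,\text{free vertex}\}$, and the $(r-2)$-set you must extend inside $\cL_\cF(A)$ is $\pi(E\cap F_0)$, an $(r-2)$-subset of $F^*$. Neither the freedom in $\pi$ (which only permutes within $F^*$) nor the freedom in earlier free-vertex placements (there are none relevant here) lets you avoid a prescribed $(r-2)$-set sitting inside $F^*$. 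An $\cH$-free family $\cF$ can easily be arranged so that \emph{every} $(r-2)$-subset of a given $F^*$ has zero extensions in $\cL_\cF(A)$: simply delete from $\cL_\cF(A)$ every $(r-1)$-set containing any such subset, at a cost of only $O(n)$ edges---far below the $O(n^{r-1-\beta})$ slack. Your averaging over all $(r-2)$-sets is therefore beside the point; the set $P_E$ is forced, not generic.

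The paper handles exactly this obstruction by a two-case argument rather than by a single direct embedding. It passes from $\cL_\cF(A)$ to the sub-family $\cL^*$ of edges lying in copies of $\cK^{(r-1)}_{r-1}(s)$. If some $(r-2)$-subset of some edge of $\cB:=\cF\setminus\cF_A$ lies in $\partial_{r-2}(\cL^*)$, then the full $\cK^{(r-1)}_{r-1}(s)$ provides a robust host into which the $(r-1)$-partite graph $\cH'-S'$ embeds in one stroke (no delicate one-vertex greedy extensions are needed). If not, then $\partial_{r-2}(\cB)$ and $\partial_{r-2}(\cL^*)$ are disjoint, and a Kruskal--Katona computation (Lemma~\ref{fine-structure}, with $\cD=\partial_{r-1}(\cB)$ and $\gamma=1/s$) shows $|\cB|=o(|\overline{\cL_\cF(A)}|)\le o(|\cS_A\setminus\cF_A|)$, whence $|\cF|=|\cF_A|+|\cB|\le|\cS_A|$. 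Your proposal has no analogue of this second case, and without it the argument cannot close: you need not only ``embed when possible'' but also ``count when not.''

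A smaller but still real issue: your construction of a cross-cut $C$ of $\cH\setminus F_0$ disjoint from $F_0$ is not sound. You correctly rule out $|C\cap F_0|=1$, but $|C\cap F_0|\ge 2$ can occur (take $r=4$, $F_0=\{1,2,3,4\}$ and four further edges $\{1,a,b,c\}$, $\{1,d,e,f\}$, $\{2,g,h,i\}$, $\{2,j,k,l\}$; here $\sigma(\cH)=3$, $\sigma(\cH\setminus F_0)=2$, and the \emph{unique} minimum cross-cut of $\cH\setminus F_0$ is $\{1,2\}\subseteq F_0$). Neither replacing a vertex $v\in C\cap F_0$ by a degree-$1$ vertex of one of the edges it represents, nor switching to ``a different edge of $\cH$ still satisfying the hypothesis,'' is justified in general.
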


For $r\geq 5$, Theorem~\ref{critical-exact} implies the exact results for $r$-uniform linear paths and cycles of odd length~\cite{FJ-cycles, FJS} and also the exact results~\cite{BK} on the disjoint union of linear paths and cycles at least one of which has odd length.
For general $2$-reducible $r$-trees, we can sharpen the error term in the
upper bound of Theorem~\ref{main-asymp} to $O(n^{r-2})$.

\begin{theorem} {\bf (Sharper estimates for trees)} \label{sharper}
Let $\cH$ be a $2$-reducible $r$-tree, where $r\geq 4$.
Let $\sigma=\sigma(\cH)$. Then $\ex(n,\cH)\leq (\sigma-1)\binom{n}{r-1}+O(n^{r-2})$.
\end{theorem}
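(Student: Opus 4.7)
\emph{Proof plan.} I would combine the stability statement in Theorem~\ref{main-stability} with a direct tree-embedding argument that exploits both that $\cH$ is a tree (not merely embeddable in one) and that each edge of $\cH$ contains two degree-$1$ vertices. Assume for contradiction that $\cF\subseteq\binom{[n]}{r}$ is $\cH$-free and that $|\cF|\geq(\sigma-1)\binom{n}{r-1}+Cn^{r-2}$ for a sufficiently large constant $C=C(r,|V(\cH)|)$. Applying Theorem~\ref{main-stability} with $K=0$ produces a set $A\subseteq[n]$ with $|A|=\sigma-1$ such that, writing $B=[n]\setminus A$, both the complement of $\cL_\cF(A)$ inside $\binom{B}{r-1}$ and the number of members of $\cF$ that meet $A$ in anything other than exactly one vertex are $O(n^{r-1-\beta})$.

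Partition $\cF=\cF_0\cup\cF_1\cup\cF_{\geq 2}$ by $|F\cap A|$. From the trivial estimates
$$
|\cF_1|\leq(\sigma-1)\binom{n-\sigma+1}{r-1}=(\sigma-1)\binom{n}{r-1}-\Theta(n^{r-2}),\qquad
|\cF_{\geq 2}|\leq\binom{\sigma-1}{2}\binom{n-\sigma+1}{r-2}=O(n^{r-2}),
$$
combined with the density assumption on $|\cF|$, one obtains $|\cF_0|\geq C'n^{r-2}$ for a constant $C'$ proportional to $C$. The theorem thus reduces to the assertion that an $\cH$-free family $\cF$ cannot simultaneously have almost-complete links through some $A$ of size $\sigma-1$ and contain $\Omega(n^{r-2})$ edges entirely within $B$.

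To derive this contradiction I would embed $\cH$ into $\cF$ greedily along a tree-defining ordering $E_1,\ldots,E_m$. Fix a minimum cross-cut $S=\{s_1,\ldots,s_\sigma\}$ of $\cH$ and reorder so that $E_1$ is an edge of $\cH$ containing $s_\sigma$. Map $s_1,\ldots,s_{\sigma-1}$ to distinct vertices of $A$, and pick (via pigeonhole on $|\cF_0|\geq C'n^{r-2}$) a vertex $v\in B$ with many $\cF_0$-edges through it, setting $s_\sigma\mapsto v$; embed $E_1$ to a suitable edge of $\cF_0$ containing $v$. For $i\geq 2$ extend the embedding to $E_i$ by using that $E_i\cap E_{\alpha(i)}$ is already placed and that, by $2$-reducibility, at least two vertices of $E_i$ are fresh: edges of $\cH$ through some $s_j$ with $j<\sigma$ are routed through $\cL_\cF(a_j)$, whose deficiency in $\binom{B}{r-1}$ is $O(n^{r-1-\beta})$, while edges through $s_\sigma$ are routed inside $\cF_0$. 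The two free vertices per edge provide $\Theta(n^2)$ candidate placements, far more than the $O(n^{r-1-\beta})$ of deficient extensions, so the greedy choice always succeeds.

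The main obstacle is the uniform bookkeeping during the greedy embedding: at each step one must simultaneously avoid the previously-used $O(|V(\cH)|)$ vertices, the deficient $(r-1)$-tuples through the relevant vertex of $A$, and---when traversing edges incident to $s_\sigma$---deficient tuples inside $\cF_0$. The $2$-reducibility is precisely the feature that makes this bookkeeping work: having two truly free vertices per edge gives $\Theta(n^2)$ slack, which absorbs the sub-$n^{r-1}$ error terms and yields the sharper $O(n^{r-2})$ upper bound.
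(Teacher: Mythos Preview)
Your plan contains a genuine gap in the greedy embedding, in two places.

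First, the comparison you use to justify the greedy step through the vertices $a_j\in A$ goes the wrong way. When you fix the already-placed part $f(E_i\cap E_{\alpha(i)})$ (of size at most $r-3$ inside $B$ once $a_j$ is accounted for) and let the two degree-$1$ vertices range freely, you get $\Theta(n^2)$ candidate $(r-1)$-sets. The deficiency of $\cL_\cF(a_j)$ that stability gives you is $O(n^{r-1-\beta})$, and for $r\geq 4$ one has $r-1-\beta>2$, so the global deficiency can swamp \emph{all} candidates through your fixed set. Nothing prevents the missing $(r-1)$-sets from being concentrated on the particular $(r-3)$-set you have already committed to. The paper avoids this by working not with $\cL_\cF(A)$ directly but with $\cL^*$, the edges of $\cL_\cF(A)$ lying in a copy of $\cK^{(r-1)}_{r-1}(s)$; then the embedding of $\cH'-S'$ is done inside a single $\cK^{(r-1)}_{r-1}(s)$, which makes the extension trivial.

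Second, and more seriously, the routing of the $s_\sigma$-edges through $\cF_0$ cannot work as described. From $|\cF_0|\geq C' n^{r-2}$ pigeonhole only gives you a vertex $v$ with $\deg_{\cF_0}(v)=\Omega(n^{r-3})$. To greedily extend a second (or later) edge $E_i\ni s_\sigma$ you must find $F\in\cF_0$ containing both $v$ and the already-placed set $f(E_i\cap E_{\alpha(i)})$; but $\cF_0$ is far too sparse for there to be any reason such an $F$ exists. The ``two free vertices'' slack is irrelevant here because the host $\cF_0$ is not close to complete---it is of order $n^{r-2}$, not $n^{r-1}$. This is exactly the step where being a tree rather than merely embeddable in one is exploited in the paper, but not in the way you propose: the paper applies the delta-system method (Lemma~\ref{homogeneous}) to $\cB=\cF_0$ to extract a homogeneously centralized subfamily $\cB^*$ of proportional size, uses the limb decomposition (Lemma~\ref{limb}) to identify the correct vertex $w\in S$ and the attaching set $E\cap F$, embeds the $(r-1)$-tree $\cH_w-\{w\}$ into $\cD_x=\cL_{\cB^*}(x)$ via Lemma~\ref{tree-embedding} (the centralized structure is precisely what guarantees the high $\delta_{r-2}$ needed), and then derives a shadow-disjointness statement that feeds into Lemma~\ref{fine-structure}. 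Your proposal supplies no substitute for this machinery.
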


For $(r-2)$-reducible $r$-trees, i.e., $r$-uniform multi-expansions of $2$-trees,
we can sharpen our estimates even further, which sometimes yields exact results.

\begin{theorem}[Sharper results on multi-expansions of $2$-trees] \label{2-tree-expansion}
Let $r\geq 4$ and $\cH$ an $(r-2)$-reducible $r$-tree with $\sigma(\cH)=t+1$.
Let $\pi$ be a tree-defining ordering of $\cH$ and $S$ be a minimum cross-cut of $\cH$. Let $w$ be the last vertex in $S$ that is included in $\pi$ and
$\cH_w$ the subgraph of $\cH$ consisting of all the edges containing $w$.
Suppose that $\cH_w$ is a linear star $\cL^r_{p}$ (i.e.,
  a sunflower with a single vertex in the kernel and $p$ petals).
Then there exists a positive integer $n_1$ such that for all $n\geq n_1$ we have
$$\ex(n,\cH)\leq \binom{n}{r}-\binom{n-t}{r}+\ex(n-t, \cL^r_{p}).$$
\end{theorem}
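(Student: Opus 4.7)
The plan is to find a small set $A$ that meets almost every edge of $\cF$ and then to show that the edges of $\cF$ disjoint from $A$ form an $\cL^r_p$-free family; the claimed bound follows immediately. Let $\cF \subseteq \binom{[n]}{r}$ be $\cH$-free, and assume toward contradiction that $|\cF| > \binom{n}{r}-\binom{n-t}{r}+\ex(n-t,\cL^r_p)$. Since $\binom{n}{r}-\binom{n-t}{r} = t\binom{n}{r-1}+O(n^{r-2})$ and $\ex(n-t,\cL^r_p)=O(n^{r-2})$ (by Theorem~\ref{main-asymp} applied to $\cL^r_p$, whose cross-cut number is $1$), one has $|\cF| \geq t\binom{n}{r-1}-Kn^{r-1-\beta}$ for every constant $K \geq 0$ once $n$ is large. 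Theorem~\ref{main-stability} then supplies a set $A\subseteq [n]$ with $|A| = t = \sigma-1$ and $|\cL_\cF(A)|\geq \binom{n}{r-1}-C_1 n^{r-1-\beta}$, i.e., the common link of $A$ is very close to the complete $(r-1)$-graph on $[n]\setminus A$.

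Partition $\cF = \cF_A \sqcup \cG$ with $\cF_A = \{F \in \cF : F\cap A \neq \emptyset\}$ and $\cG = \{F\in\cF:F\cap A=\emptyset\}$. Since $|\cF_A| \leq \binom{n}{r}-\binom{n-t}{r}$ trivially, the hypothesis forces $|\cG|>\ex(n-t,\cL^r_p)$, so $\cG$, viewed as an $r$-graph on $[n]\setminus A$, contains a copy of $\cL^r_p$: a linear star with kernel $u \in [n]\setminus A$ and pairwise disjoint petals $L_j = \{u\}\cup E_j'$ for $j = 1,\dots,p$.

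Next I embed $\cH$ into $\cF$ for a contradiction, using $L$ to realise $\cH_w$ and the near-complete link $\cL_\cF(A)$ to realise $\cH^* := \cH\setminus\cH_w$. Map the cross-cut vertices $S\setminus\{w\}$ bijectively onto $A$ and send $w \mapsto u$. The $(r-2)$-reducibility of $\cH$ is essential here: it is inherited by $\cH^*$, and each $E_j\in\cH_w$ contains at least $r-2$ vertices of degree $1$ in $\cH$ besides $w$, so at most one vertex $v_j\in E_j\setminus\{w\}$ can lie in $V(\cH^*)$. For each existing $v_j$, pre-assign an arbitrary image $q_j\in E_j'$; this produces only $O(1)$ pre-assignments. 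One then embeds the ``reduced'' $(r-1)$-graph $\{E\setminus\{s_i\}:E\in\cH^*\}$ into $\cL_\cF(A)$, extending the pre-assignments and avoiding the $O(1)$ forbidden vertices $\{u\}\cup E_1'\cup\dots\cup E_p'$ (minus the $q_j$'s), via a standard embedding lemma for near-complete $(r-1)$-graphs; this is possible because $\cL_\cF(A)$ misses only $o(n^{r-1})$ of the $\binom{n-t}{r-1}$ candidate sets. Finally, send the remaining $r-2$ (or $r-1$) degree-$1$ vertices of each $E_j$ bijectively onto $E_j'\setminus\{q_j\}$; then $f(E_j) = \{u\}\cup E_j'\in L\subseteq\cG\subseteq\cF$. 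Together with the embedding of $\cH^*$, this yields a copy of $\cH$ in $\cF$, contradicting $\cF$ being $\cH$-free.

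The main obstacle is the embedding lemma: one must show that a fixed $(r-1)$-graph $\cT$ embeds into any $(r-1)$-graph $\cM$ on $[n]\setminus A$ with $|\cM|\geq \binom{n}{r-1}-o(n^{r-1})$, consistently with $O(1)$ pre-assigned vertex images and avoiding $O(1)$ forbidden vertices. This follows by a codegree or dependent-random-choice argument---the $(r-2)$-reducibility of $\cH^*$ provides many degree-$1$ leaves per edge, giving ample freedom at each greedy step---but the details must be arranged carefully so that the pre-assigned $v_j$-images do not conflict with the greedy choices, and so that each edge of $\cH^*$ retains at least one truly free new vertex when introduced. Tracking the error terms to determine the threshold $n_1$ is routine.
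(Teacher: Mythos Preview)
Your overall strategy---find $A$ via stability, bound $|\cF_A|$ trivially by $|\cS_A|=\binom{n}{r}-\binom{n-t}{r}$, and show the residual $\cG=\cF\setminus\cF_A$ is $\cL^r_p$-free---is cleaner than the paper's, but the last step has a genuine obstruction. By Lemma~\ref{limb} and the $(r-2)$-reducibility, $V(\cH_w)\cap V(\cH')$ is either empty or a single vertex $y\neq w$ lying in exactly one petal of $\cH_w$. In the latter case you pre-assign $y\mapsto q$ for some $q$ in one petal $L_1\setminus\{u\}$ of the copy of $\cL^r_p$ you found in $\cG$, and your embedding of $\cH'-S'$ into $\cL_\cF(A)$ must then realise at least one $(r-1)$-edge through $q$. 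But the bound $|\cL_\cF(A)|\geq\binom{n}{r-1}-O(n^{r-1-\beta})$ is entirely compatible with $q$ having degree~$0$ in $\cL_\cF(A)$: the missing $(r-1)$-sets could all concentrate on a set of $O(n^{1-\beta})$ ``bad'' vertices, and nothing prevents every vertex of your linear star (there are only $O(1)$ of them) from being bad. No codegree or dependent-random-choice argument can produce an edge through an isolated vertex, so your embedding lemma as stated is false. Discarding the edges of $\cG$ touching bad vertices does not help either, since that can remove up to $O(n^{r-\beta})$ edges, far more than $\ex(n-t,\cL^r_p)=O(n^{r-2})$.

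The paper's proof addresses exactly this point. It does \emph{not} claim that all of $\cG$ is $\cL^r_p$-free. Instead it defines $\cL^*\subseteq\cL_\cF(A)$ as those $(r-1)$-edges lying in a copy of $\cK^{(r-1)}_{r-1}(s)$, and splits $\cG=\cB\cup\cC$ according to whether an edge meets $V(\cL^*)$ in at most one or at least two vertices. Only $\cC$ is shown to be $\cL^r_p$-free: any petal of a linear star in $\cC$ carries a vertex $\wt y\neq\wt w$ in $V(\cL^*)$, which by construction sits inside a full $\cK^{(r-1)}_{r-1}(s)$ and thus serves as a guaranteed good anchor for embedding $\cH'$. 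The family $\cB$, whose edges may well have no usable anchor, is instead absorbed into the main term via the Kruskal--Katona/shadow machinery of Lemma~\ref{fine-structure}: since $r\geq 4$, every $(r-2)$-subset of an edge of $\cB$ misses $\partial_{r-2}(\cL^*)$, which forces $|\cF_A\cup\cB|\leq\binom{n}{r}-\binom{n-t}{r}$. Your trivial bound on $|\cF_A|$ alone does not capture this, because $\cB$ need not be empty.
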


In many cases, Theorem \ref{2-tree-expansion} reduces the determination of
the Tur\'an number of an $(r-2)$-reducible $r$-tree to determination of the Tur\'an number  of a linear star $\cL^r_p$. For all $r\geq 5$ and $p\geq 2$,
Frankl and F\"uredi~\cite{FF-exact} determined  $\ex(n,\cL^r_p)$ asymptotically, showing that $\ex(n,\cL^r_p)=(\varphi(2,p)+o(1))\binom{n-2}{r-2}$, where $\varphi(2,p)$ is
the maximum size of a $2$-graph not containing a star of size $p$ or a matching of size $p$.
As determined by Abbott et al.~\cite{AHS},  $\varphi(2,p)=p(p-1)$ if $p$ is odd and that
$\varphi(2,p)=(p-1)^2+\frac{1}{2}(p-2)$ if $p$ is even.
Using the asymptotic result of Frankl and F\"uredi~\cite{FF-exact} and the stability method used in
this paper, Irwin and Jiang~\cite{IJ} were able to determine the exact value of $\ex(n,\cL^r_p)$
for all $r\geq 5$, $p\geq 2$ when $n$ is large.
Based on this, Theorem~\ref{2-tree-expansion} can then be used to obtain the exact value of $\ex(n,\cH)$ for many $(r-2)$-reducible $r$-trees $\cH$.

For instance, for $r\geq 4$ and large $n$, as was already obtained by Frankl~\cite{frankl1}, $\ex(n,\cL^r_2)=\binom{n-2}{r-2}$.
If $\cH$ is a linear path of even length or if $\cH$ is the disjoint union of linear  paths all of which have even length,
then $\cH_w=\cL^r_2$ 
and for large $n$ Theorem~\ref{2-tree-expansion} shows that $\ex(n,\cH)\leq \binom{n}{r}-\binom{n-t}{r}+\binom{n-t-2}{r-2}$, where $t=\sigma(\cH)-1$.
On the other hand, a trivial construction shows that $\ex(n,\cH)\geq \binom{n}{r}-\binom{n-t}{r}+\binom{n-t-2}{r-2}$.
So for $r\geq 4$, we immediately retrieve the even case of the exact results from~\cite{FJS} and~\cite{BK}.



\section{Lemmas on $r$-trees, partial $r$-trees, and cross-cuts}  \label{s:lemmas}

In this section, we develop a series of lemmas.
The following (easy) lemma was given in~\cite{Furedi-trees}.
\begin{proposition} \label{tight-tree}
Every $r$-tree $\cH$ is contained in a tight $r$-tree  $\cG$ with $V(\cG)=V(\cH)$.
Furthermore, a starting edge of $\cH$ can be used as a starting edge in $\cG$. \qed
\end{proposition}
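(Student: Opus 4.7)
The plan is to augment $\cH$ by inserting intermediate edges between each $E_i$ and its parent $E_{\alpha(i)}$ so as to bridge the gap whenever $|E_i \cap E_{\alpha(i)}| < r-1$, using only existing vertices. Fix a tree-defining ordering $E_1, \ldots, E_m$ of $\cH$ with parent function $\alpha$ (we may assume $\cH$ is simple, since duplicated edges may simply be dropped). For each $i \geq 2$ set $D_i := E_i \cap E_{\alpha(i)}$ and $s_i := |D_i| \leq r-1$, and enumerate $E_{\alpha(i)} \setminus D_i = \{a_1, \ldots, a_{r-s_i}\}$ and $E_i \setminus D_i = \{b_1, \ldots, b_{r-s_i}\}$. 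If $s_i = r-1$ no new edge is needed; otherwise, I would insert the chain of $r-s_i-1$ new $r$-sets
\begin{equation*}
F^{(i)}_j := D_i \cup \{b_1, \ldots, b_j\} \cup \{a_{j+1}, \ldots, a_{r-s_i}\}, \qquad j = 1, \ldots, r-s_i-1,
\end{equation*}
between $E_{\alpha(i)}$ and $E_i$ in the ordering, adopting the convention $F^{(i)}_0 := E_{\alpha(i)}$ and $F^{(i)}_{r-s_i} := E_i$.

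By construction $F^{(i)}_j$ and $F^{(i)}_{j-1}$ differ in exactly one vertex ($a_j$ is swapped for $b_j$), so $|F^{(i)}_j \cap F^{(i)}_{j-1}| = r-1$ for every $j$. What remains is to verify that the enlarged ordering
\begin{equation*}
E_1,\; F^{(2)}_1, \ldots, F^{(2)}_{r-s_2-1},\; E_2,\; F^{(3)}_1, \ldots, F^{(3)}_{r-s_3-1},\; E_3,\; \ldots,\; E_m
\end{equation*}
defines a tight $r$-tree $\cG$ in which each new $F^{(i)}_j$ has parent $F^{(i)}_{j-1}$ and the original $E_i$ (for $i \geq 2$) now has parent $F^{(i)}_{r-s_i-1}$. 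The only nontrivial point is the containment $F^{(i)}_j \cap (\text{union of all preceding edges}) \subseteq F^{(i)}_{j-1}$. The key observation is that $B_i := E_i \setminus E_{\alpha(i)}$ is disjoint from $\bigcup_{\ell<i} E_\ell$ (an immediate consequence of the original tree property applied to $E_i$), and is also disjoint from every previously inserted edge, since each $F^{(i')}_{j'}$ with $i'<i$ lies in $E_{i'} \cup E_{\alpha(i')}$. Hence the only vertices of $F^{(i)}_j$ that can appear in an earlier edge lie in $D_i \cup \{a_{j+1},\ldots,a_{r-s_i}\} \cup \{b_1, \ldots, b_{j-1}\}$, which equals $F^{(i)}_{j-1}$.

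Because $E_1$ is never modified and appears first, any starting edge of $\cH$ remains a starting edge of $\cG$; and $V(\cG) = V(\cH)$ is automatic since each $F^{(i)}_j \subseteq E_{\alpha(i)} \cup E_i$. The only foreseen obstacle is purely bookkeeping: a freshly inserted $F^{(i)}_j$ might accidentally equal some earlier (original or inserted) edge. Since the definition of hypergraph tree permits duplicated edges this causes no harm, and one may then pass to the underlying simple hypergraph (as discussed immediately before the proposition) to obtain a simple tight $r$-tree containing $\cH$.
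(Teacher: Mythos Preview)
Your proof is correct and is essentially the natural argument. Note that the paper does not supply its own proof of this proposition; it merely cites \cite{Furedi-trees} and marks the statement with a \qed, so there is no in-paper argument to compare against. Your interpolation idea---swapping one vertex at a time along the chain from $E_{\alpha(i)}$ to $E_i$---is exactly the standard way to tighten an $r$-tree, and the verification of the tree property is handled cleanly by your observation that $B_i=E_i\setminus E_{\alpha(i)}$ is disjoint from every edge (original or inserted) appearing earlier in the enlarged ordering.

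One tiny slip: the set $D_i \cup \{a_{j+1},\ldots,a_{r-s_i}\} \cup \{b_1,\ldots,b_{j-1}\}$ has only $r-1$ elements, so it is \emph{contained in} $F^{(i)}_{j-1}$ rather than \emph{equal to} it (it is missing $a_j$). This is exactly the containment you need, so the argument is unaffected.
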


\begin{lemma}[Tree embedding] \label{tree-embedding}
Let $\cH$ be an $r$-tree, where $r\geq 2$.
Let $E_1$ be a starting edge of $\cH$.
Let $\cF$ be an $r$-graph with $\delta_{r-1}(\cF)\geq |V(\cH)|-r+1$.
Then any mapping $f: V(E_1)\to V(\cF)$ such that $f(E_1)\in \cF$ can be extended to an embedding of $\cH$ in $\cF$.
\end{lemma}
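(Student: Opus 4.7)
The plan is to reduce to the tight case via Proposition~\ref{tight-tree} and then build the embedding edge-by-edge, using the minimum codegree condition at each step. First, apply Proposition~\ref{tight-tree} to embed $\cH$ into a tight $r$-tree $\cG$ with $V(\cG)=V(\cH)$ and with $E_1$ still serving as a starting edge of $\cG$. Since any embedding of $\cG$ in $\cF$ restricts to an embedding of $\cH$ in $\cF$, and the hypothesis $\delta_{r-1}(\cF)\geq |V(\cH)|-r+1=|V(\cG)|-r+1$ carries over unchanged, it suffices to prove the lemma when $\cH=\cG$ is tight.

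Fix a tree-defining ordering $E_1,E_2,\ldots,E_m$ of the tight tree, starting from the given edge $E_1$. Because the tree is tight, for each $i\geq 2$ the edge $E_i$ meets $\bigcup_{j<i}E_j$ in exactly $r-1$ vertices (which form an $(r-1)$-subset $D_i$ of $E_{\alpha(i)}$) and contains exactly one new vertex $v_i$. Note that $|V(\cH)|=r+(m-1)$, so the hypothesis reads $\delta_{r-1}(\cF)\geq m$. I would prove by induction on $i\leq m$ that the partial embedding $f$ can be extended injectively from $V(E_1)\cup\cdots\cup V(E_{i-1})$ to $V(E_1)\cup\cdots\cup V(E_i)$ so that $f(E_j)\in\cF$ for every $j\leq i$.

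For the inductive step, suppose $f$ has been defined on the first $i-1$ edges; let $U_{i-1}=f\bigl(\bigcup_{j<i}V(E_j)\bigr)$, so $|U_{i-1}|=r+(i-1)-1=r+i-2$. The image $f(D_i)$ is an $(r-1)$-subset of $U_{i-1}$, so the number of vertices $w\in V(\cF)$ with $f(D_i)\cup\{w\}\in\cF$ is at least $\deg_{\cF}(f(D_i))\geq\delta_{r-1}(\cF)\geq m$. The ``forbidden'' choices — those already used and not in $f(D_i)$ — number $|U_{i-1}|-|f(D_i)|=i-1$, which is strictly less than $m$. Hence some admissible $w$ exists, and setting $f(v_i)=w$ extends the embedding while keeping $f$ injective and ensuring $f(E_i)\in\cF$.

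The main obstacle is essentially just the bookkeeping tying the minimum codegree bound to the counting of already-used vertices at each step; once one sees that ``$|V(\cH)|-r+1$'' equals the number of edges of the containing tight tree, the inequality $i-1<m\leq\deg_{\cF}(f(D_i))$ makes each extension step automatic. No additional structural lemmas beyond Proposition~\ref{tight-tree} are required.
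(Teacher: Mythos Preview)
The proposal is correct and follows essentially the same approach as the paper's proof: reduce to the tight case via Proposition~\ref{tight-tree}, then extend the embedding greedily one edge at a time, using the codegree bound to find a new image vertex avoiding the previously used ones. Your explicit rewriting of $|V(\cH)|-r+1$ as the number $m$ of edges makes the arithmetic slightly more transparent, but the argument is otherwise identical to the paper's.
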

\begin{proof}
By Proposition~\ref{tight-tree} we may assume that $\cH$ is a tight $r$-tree.
Let $E_1,\ldots, E_m$ be an ordering of the edges of $\cH$  defining $\cH$ as a tight $r$-tree.
For each $i\in [m]$ let $\cH_i=\{ E_j: j\leq i\}$ be the initial segment of $\cH$. Then $\cH_i$ is a tight $r$-tree.
We use induction on $i$ to show that $f$ can be extended to an embedding $f_i$ of $\cH_i$ in $\cF$. For the basis step, let $f_1=f$.
In general, let $2\leq i\leq |E(\cH)|$ and suppose $f$ can be extended to an embedding $f_{i-1}$ of $\cH_{i-1}$ in $\cF$. 
Let $E_{\alpha(i)}$ be a parent of $E_i$ and $D=E_i\cap E_{\alpha(i)}$. By definition, $|D|=r-1$.
Let $D'=f_{i-1}(D)$. 
Since $\delta_{r-1}(\cF)\geq |V(\cH)|-r+1$ we have $\deg_\cF(D')>|f_{i-1}\left(\bigcup \cH_{i-1}\right) |-|D'|$.
So one can find an edge $F$ in $\cF$ containing $D'$ such that $(F\setminus D')\cap f_{i-1}\left(\bigcup \cH_{i-1}\right)= \emptyset$.
Now $\cH'_i:=f_{i-1}\left(\cH_{i-1}\right)\cup F$ is a copy of $\cH_i$ in $\cF$. We extend $f_{i-1}$ to $f_i$ by mapping the single vertex in $E_i\setminus E_{\alpha(i)}$ to the single vertex in $F\setminus D'$. 
\end{proof}

\begin{proposition}[Embedding an expansion]   \label{expansion-embedding}
Let $r\geq 2$ be an integer.
Suppose that $\cG$ is an $r$-graph with $s$ vertices and $S$ is a set of degree $1$ vertices in $\cG$.
Let $\cF$ be an $r$-graph. If $\cG-S\subseteq \Ker_s(\cF)$, then $\cG\subseteq \cF$.
\end{proposition}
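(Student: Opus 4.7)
The plan is to construct an embedding $\phi:V(\cG)\to V(\cF)$ edge by edge, starting from the embedding of $\cG-S$ into $\Ker_s(\cF)$ that the hypothesis provides, and then successively assigning images to the degree-$1$ vertices in $S$ by invoking the kernel-degree extension property \eqref{eq:extension}. First I would fix notation: let $\phi_0:V(\cG)\setminus S\to V(\cF)$ be the injection realizing $\cG-S\subseteq\Ker_s(\cF)$, and for each edge $E\in\cG$ set $D(E):=E\setminus S$ and $T(E):=E\cap S$. The degree-$1$ condition on $S$ ensures that the sets $T(E)$ are pairwise disjoint as $E$ ranges over $\cG$, which is exactly what makes the edge-by-edge extension go through.

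Next I would enumerate $\cG$ as $E_1,\ldots,E_m$ in an arbitrary order and build injective partial maps $\phi_0\subseteq\phi_1\subseteq\cdots\subseteq\phi_m$, with $\phi_i$ defined on $(V(\cG)\setminus S)\cup T(E_1)\cup\cdots\cup T(E_i)$ and $\phi_i(E_j)\in\cF$ for all $j\leq i$. The inductive step extends $\phi_{i-1}$ to $\phi_i$ as follows: write $D:=\phi_{i-1}(D(E_i))$, let $Y$ denote the set of vertices already used by $\phi_{i-1}$ outside $D$, use \eqref{eq:extension} to produce an edge $F_i\in\cF$ containing $D$ with $(F_i\setminus D)\cap Y=\emptyset$, and then extend $\phi_{i-1}$ by any bijection $T(E_i)\to F_i\setminus D$. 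Since $|F_i\setminus D|=r-|D(E_i)|=|T(E_i)|$ and $T(E_i)$ is disjoint from the current domain, the new map $\phi_i$ is injective and satisfies $\phi_i(E_i)=F_i\in\cF$, as required. The final $\phi_m$ is the desired embedding.

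The one point of substance is verifying that \eqref{eq:extension} actually applies at every step, i.e.\ that $\deg^*_\cF(D)>|Y|$. Here the counting goes through cleanly: the domain of $\phi_{i-1}$ has size $s-\sum_{j\geq i}|T(E_j)|\leq s-|T(E_i)|$, and $D$ lies in its image, so $|Y|\leq s-|T(E_i)|-|D(E_i)|=s-r<s\leq\deg^*_\cF(D)$. I do not anticipate any genuine obstacle; the proof is essentially greedy extension combined with careful bookkeeping of the vertex budget, using the crucial fact that $|V(\cG)|=s$ matches the kernel threshold.
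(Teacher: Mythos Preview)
Your proof is correct and follows essentially the same approach as the paper's: both start from the given embedding of $\cG-S$ into $\Ker_s(\cF)$ and greedily extend each $f(D_i)$ to an edge $F_i\in\cF$ one at a time via \eqref{eq:extension}, using the disjointness of the sets $T(E_i)=E_i\cap S$. Your write-up is more explicit about the vertex-budget bookkeeping (the inequality $|Y|\leq s-r<s$), but the underlying argument is identical.
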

\begin{proof}
Easy from definitions.
Let $E_1,\ldots, E_m$ be the edges of $\cG$. For each $i$, let $D_i=E_i\setminus S$.
Note that $\{D_1,D_2,\ldots, D_m\}$ may be a multi-set.
Let $f$ be an embedding of $\cG-S$ into $\Ker_s(\cF)$.
Let $W=\bigcup_{i=1}^m f(D_i)$.
To obtain a copy of $\cG$ in $\cF$ it suffices to extend each $f(D_i)$, $1\leq i\leq m$, to some edge $F_i$
of $\cF$ containing $f(D_i)$ such that for $i=1,\dots, m$, $F_i\setminus f(D_i)$ are pairwise disjoint and that each $F_i\setminus f(D_i)$ is disjoint from $W$.
On can  define the appropriate $F_i$'s one by one using~\eqref{eq:extension}.
\end{proof}

\begin{proposition}[Trees and shadows]\label{tree-bound}
Let $\cH$ be an $r$-tree with $p$ vertices.
Let $\cF$ be an $r$-graph on $[n]$ not containing $\cH$.
Then $|\cF|\leq (p-r) |\partial_{r-1}(\cF)|$.
In particular, we have $\ex(n,\cH)\leq (p-r) \binom{n}{r-1}.$
\end{proposition}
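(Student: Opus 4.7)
The plan is to combine the tree embedding lemma (Lemma \ref{tree-embedding}) with a greedy deletion argument on $(r-1)$-subsets.

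First, I would extract the following degree dichotomy from Lemma \ref{tree-embedding}: if $\cF'\subseteq \binom{[n]}{r}$ is nonempty and $\cH$-free, then $\delta_{r-1}(\cF')\leq p-r$. Indeed, $\cH$ has $p$ vertices and $\cF'$ contains at least one edge $E$; picking a starting edge $E_1$ of $\cH$ together with any bijection $f:V(E_1)\to E$ gives $f(E_1)\in \cF'$. If $\delta_{r-1}(\cF')\geq p-r+1=|V(\cH)|-r+1$, the lemma would extend $f$ to an embedding of $\cH$ into $\cF'$, contradicting $\cH$-freeness. Hence some $(r-1)$-subset $D\in \partial_{r-1}(\cF')$ satisfies $\deg_{\cF'}(D)\leq p-r$.

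Next I would iterate the deletion. Set $\cF_0:=\cF$. While $\cF_i\neq\emptyset$, choose $D_i\in \partial_{r-1}(\cF_i)$ with $\deg_{\cF_i}(D_i)\leq p-r$ (possible by the previous paragraph since $\cF_i\subseteq \cF$ is still $\cH$-free), and let $\cF_{i+1}$ be obtained from $\cF_i$ by removing all edges of $\cF_i$ that contain $D_i$. The process terminates in some number of steps $N$, and
\[
|\cF|=\sum_{i=0}^{N-1}\bigl(|\cF_i|-|\cF_{i+1}|\bigr)=\sum_{i=0}^{N-1}\deg_{\cF_i}(D_i)\leq (p-r)N.
\]
The key observation is that the $D_i$'s are distinct elements of $\partial_{r-1}(\cF)$: each $D_i$ lies in $\partial_{r-1}(\cF_i)\subseteq \partial_{r-1}(\cF)$, and once all edges containing $D_i$ are removed we have $D_i\notin \partial_{r-1}(\cF_{i+1})$, hence $D_i\neq D_j$ for $j>i$. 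Therefore $N\leq |\partial_{r-1}(\cF)|$, which yields $|\cF|\leq (p-r)|\partial_{r-1}(\cF)|$. The bound $\ex(n,\cH)\leq (p-r)\binom{n}{r-1}$ follows immediately from $|\partial_{r-1}(\cF)|\leq \binom{n}{r-1}$.

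This argument is essentially routine once Lemma \ref{tree-embedding} is in hand; the only minor point to get right is ensuring the $(r-1)$-sets chosen in successive rounds are distinct (so that the count is against $|\partial_{r-1}(\cF)|$ rather than something larger), which follows automatically from the fact that we remove every edge through $D_i$ at step $i$.
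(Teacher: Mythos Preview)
Your proof is correct and follows essentially the same approach as the paper: both combine Lemma~\ref{tree-embedding} with a greedy deletion of edges through low-degree $(r-1)$-sets. The paper phrases it as a proof by contradiction (clean out low-degree $(r-1)$-sets, argue the remainder is nonempty with $\delta_{r-1}\geq p-r+1$, then embed $\cH$), whereas you run the same deletion forward until the family is empty and count; the logical content is identical.
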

\begin{proof} The second statement follows from the first since $|\partial_{r-1}(\cF)|\leq \binom{n}{r-1}$.
Suppose $|\cF|>(p-r) |\partial_{r-1}(\cF)|$.
We successively remove edges from $\cF$ that contain an $(r-1)$-set $D$ whose
degree becomes at most $p-r$ until no such edge remains.
Denote the remaining graph by $\cF'$.
Since at most $(p-r)$ edges were removed for each such $D$ and there are at most $|\partial_{r-1}(\cF)|$ such $D$,
$\cF'$ is nonempty. By definition, $\delta_{r-1}(\cF')\geq p-r+1$.
By Lemma~\ref{tree-embedding}, $\cH\subseteq \cF'\subseteq \cF$, a contradiction.
\end{proof}

Lemmas similar to Proposition \ref{tree-bound} were given in~\cite{Furedi-trees} and in~\cite{KMV-path-cycle}. For specific $r$-trees, more is known.
Katona~\cite{Katona} showed that for an intersecting family of $r$-sets $\cF$ (i.e. $\cF$ avoids a matching of size $2$) one has $|\cF|\leq |\partial_{r-1}(\cF)|$.
This was recently extended by Frankl~\cite{frankl-matching} who showed that if an $r$-graph $\cF$ avoids $\cM_s$ (a matching of size $s$)
 then $|\cF|\leq (s-1)|\partial_{r-1}(\cF)| $.

\begin{lemma}[The first edge containing a vertex]\label{first-edge}
Let $E_1,\ldots, E_m$ be an ordering of edges that defines a hypergraph tree $\cH$.
Fix an $i$, where $2\leq i\leq m$.
Let $x\in E_i\setminus E_{\alpha(i)}$.
Then $E_i$ is the first edge in the ordering that contains $x$.
Also, if $y\in E_{\alpha(i)}\setminus E_i$ then no edge of $\cH$ contains both $x$ and $y$.
\end{lemma}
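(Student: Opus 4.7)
\medskip\noindent\textbf{Proof plan.}
The first statement is essentially just unwinding the tree-defining property. Suppose toward contradiction that $x$ lies in some $E_j$ with $j<i$. Then $x \in E_i \cap \bigcup_{j'<i} E_{j'}$, which by definition is contained in $E_{\alpha(i)}$. But $x \in E_i \setminus E_{\alpha(i)}$, a contradiction. So $E_i$ is indeed the first edge of the ordering that contains $x$.

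For the second statement, suppose for contradiction that some edge $E_k$ of $\cH$ contains both $x$ and $y$. The plan is to rule out the three positions of $k$ relative to $i$. If $k<i$, then $x\in E_k$ for an edge preceding $E_i$, contradicting the first part. If $k=i$, then $y\in E_i$, contradicting $y\in E_{\alpha(i)}\setminus E_i$. The remaining (and only nontrivial) case is $k>i$.

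To handle $k>i$, I would run a descent argument on the parent function $\alpha$. Since $i<k$, the edge $E_i$ lies in $\bigcup_{j<k}E_j$, and since $x\in E_k\cap E_i$ we get $x\in E_k\cap\bigcup_{j<k}E_j\subseteq E_{\alpha(k)}$. Similarly $\alpha(i)<i<k$, so $E_{\alpha(i)}\subseteq\bigcup_{j<k}E_j$, and $y\in E_k\cap E_{\alpha(i)}$ forces $y\in E_{\alpha(k)}$. Thus $E_{\alpha(k)}$ is an edge strictly earlier than $E_k$ that still contains both $x$ and $y$. Iterating the parent map produces a strictly decreasing sequence of indices $k>\alpha(k)>\alpha^2(k)>\dots$, all of whose edges contain both $x$ and $y$. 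This sequence must eventually enter the range $\{1,\dots,i\}$, reducing the situation to one of the two already-handled cases, a contradiction.

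The only place where one needs care is the descent step: verifying that $y$ is preserved under passage to the parent requires the observation that $\alpha(i)<k$, so that $E_{\alpha(i)}$ (which contains $y$) is among the edges whose intersection with $E_k$ is forced into $E_{\alpha(k)}$. Once this is noted, the argument is a short, bookkeeping-style induction on $k-i$, and nothing beyond the definition of a tree-defining ordering is used.
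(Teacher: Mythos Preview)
Your proof is correct and follows essentially the same line as the paper's. The first part is identical. For the second part, the paper takes the \emph{earliest} edge $E_j$ with $j>i$ containing both $x$ and $y$ and observes that its parent $E_{\alpha(j)}$ must already contain both, contradicting minimality; your descent along $k>\alpha(k)>\alpha^2(k)>\cdots$ is the same observation phrased as an iteration rather than a minimal-counterexample argument.
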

\begin{proof}
By definition, $E_i\cap (\bigcup_{j<i} E_j)\subseteq E_{\alpha(i)}$. Since $x\notin E_{\alpha(i)}$, $x\notin  (\bigcup_{j<i} E_j)$. So $E_i$ is the first edge in the ordering that contains $x$.  Let $y\in E_{\alpha(i)}\setminus E_i$.
At the moment of $E_i$'s addition, $x,y$ are both in $\cH$ but there is no edge containing both $x,y$.  Suppose some later edge contains both $x$ and $y$. Let $E_j$ be
an earliest such edge, where $j>i$. Then $E_{\alpha(j)}$ must already contain both $x,y$, contradicting our $E_j$. So no edge of $\cH$ contains both $x$ and $y$.
\end{proof}

\begin{lemma} {\bf(Compression of trees)} \label{compression}
Let $\cH$ be a hypergraph tree with a defining ordering $E_1,\ldots, E_m$.
Fix one $i$, where $2\leq i\leq m$. Suppose $x\in E_i\setminus E_{\alpha(i)}$ and
$y\in E_{\alpha(i)}\setminus E_i$.
For each $j=1,\ldots, m$, let $E'_j=E_j\setminus \{x\}
\cup \{y\}$ if $x\in E_j$ and let $E'_j=E_j$ if $x\not\in E_j$.
Then the list $E'_1,\ldots, E'_m$  defines a hypergraph tree $\cH'$. 
\end{lemma}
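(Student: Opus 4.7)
The plan is to show that the list $E'_1,\ldots,E'_m$ is tree-defining with the \emph{same} parent function as before, i.e.\ I take $\alpha'(j):=\alpha(j)$ for every $j>1$. Think of the compression as the pointwise image under the vertex map $\phi\colon V(\cH)\to V(\cH)$ with $\phi(x)=y$ and $\phi(v)=v$ otherwise, so $E'_l=\phi(E_l)$ for every $l$. Two facts from Lemma~\ref{first-edge} drive everything: $E_i$ is the earliest edge of the ordering containing $x$ (so $x\notin E_k$ for any $k<i$), and no edge of $\cH$ contains both $x$ and $y$. In particular $x$ lies in no $E'_l$ whatsoever, so the case $v=x$ in the verification below is automatically vacuous.

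Fix $j>1$ and $v\in E'_j\cap\bigl(\bigcup_{k<j}E'_k\bigr)$; I must show $v\in E'_{\alpha(j)}$. If $v\notin\{x,y\}$, then ``$v\in E'_l$'' is equivalent to ``$v\in E_l$'' for every $l$, so the original tree property gives $v\in E_j\cap\bigl(\bigcup_{k<j}E_k\bigr)\subseteq E_{\alpha(j)}$, whence $v\in E'_{\alpha(j)}$. The substantive case is $v=y$; since $y\in E'_j$, at least one of $y\in E_j$ or $x\in E_j$ holds, and by the non-cohabitation of $x$ and $y$ exactly one does.

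If $y\in E_j$, I exploit that $y\in E_{\alpha(i)}$ with $\alpha(i)<i$. For $j<i$ nothing to the left of $E_i$ has been modified and the claim is immediate; for $j\ge i$, the inequality $\alpha(i)<j$ gives $y\in E_j\cap E_{\alpha(i)}\subseteq E_{\alpha(j)}$ by the original tree property, and hence $y\in E'_{\alpha(j)}$. If instead $x\in E_j$, then the first-edge fact forces $j\ge i$: for $j=i$ one has $y\in E_{\alpha(i)}=E'_{\alpha(i)}$ directly, and for $j>i$ the tree property applied to $E_j$ (using $x\in E_i\cap E_j$ with $i<j$) yields $x\in E_{\alpha(j)}$, so the compression turns $E_{\alpha(j)}$ into an $E'_{\alpha(j)}$ that contains $y$.

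The only subtle point is the last subcase $v=y$, $x\in E_j$, $j>i$: a priori $E_{\alpha(j)}$ might contain neither $x$ nor $y$, leaving no mechanism to place $y$ into $E'_{\alpha(j)}$. What rescues the proof is that $x$ already appears in the earlier edge $E_i$, so the tree property for $E_j$ forces $x\in E_{\alpha(j)}$; the compression then supplies $y\in E'_{\alpha(j)}$ automatically. All remaining bookkeeping is routine case analysis, and duplicated edges among the $E'_l$ cause no trouble because hypergraph trees in this paper are allowed to carry multiplicities.
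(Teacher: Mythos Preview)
Your proof is correct and follows essentially the same approach as the paper: verify that the tree-defining property $E'_j\cap\bigl(\bigcup_{k<j}E'_k\bigr)\subseteq E'_{\alpha(j)}$ holds with the \emph{same} parent function, using the two consequences of Lemma~\ref{first-edge} (that $E_i$ is the first edge containing $x$ and that no edge contains both $x$ and $y$). The only difference is organizational: the paper splits first on the position of $j$ relative to $i$ and then argues by contradiction in the case $j>i$, whereas you split first on the vertex $v$ and argue directly; the underlying deductions are the same.
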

\begin{proof}
By Lemma~\ref{first-edge}, $E_i$ is the first edge in the ordering
that contains $x$ and that no edge of $\cH$ contains both $x$ and $y$. 
Next we show that the multi-list $E'_1,\ldots, E'_m$ defines a hypergraph tree. It suffices to check that
\begin{equation} \label{ordering}
E'_j\cap (\bigcup_{1\leq k< j} E'_k)\subseteq E'_{\alpha(j)}
\end{equation}
holds for each $2\leq j\leq m$. By the definition of $\alpha(j)$, we have
\begin{equation} \label{ordering2}
E_j\cap (\bigcup_{1\leq k< j} E_k)\subseteq E_{\alpha(j)}.
\end{equation}

First suppose $j<i$. Since $E_i$ is the first edge in the ordering
that contains $x$,  we have $E'_\ell=E_\ell$ for all $\ell\leq j$.
So~\eqref{ordering} is the same as~\eqref{ordering2}, which holds.
Next, suppose $j=i$.
Then~\eqref{ordering} holds since $E'_i\cap
(\bigcup_{k<i} E'_k)\subseteq (E_i\cap (\bigcup_{k<i} E_k))\cup \{y\}
\subseteq E_{\alpha(i)}\cup \{y\}=E'_{\alpha(i)}$, where the last equality
follows from the fact that $E_{\alpha(i)}$ contains $y$ but not $x$.
Finally, suppose $j>i$.
Observe that the edges are unchanged outside $\{x,y\}$ and that the new edges do not contain $x$.
Suppose~\eqref{ordering} does not hold. Then we must have $y\in E'_j$ and $y\notin E'_{\alpha(j)}$. The latter implies $x\notin E_{\alpha(j)}$ and so $E'_{\alpha(j)}=E_{\alpha(j)}$. Thus, $y\notin E_{\alpha(j)}$. There are two subcases to check.

First, suppose $y\in E_j$. Then $y\in E_j\setminus E_{\alpha(j)}$.
By Lemma~\ref{first-edge}, $E_j$ is the first edge in the ordering that contains $y$. Since $y\in E_{\alpha(i)}$, we have $j\leq \alpha(i)<i$, contradicting $j>i$.
Next, suppose $y\notin E_j$. Since $y\in E'_j$, we have $x\in E_j$. But $x\notin E_{\alpha(j)}$. So $x\in E_j\setminus E_{\alpha(j)}$.
By Lemma~\ref{first-edge}, $E_j$ is the first edge in the ordering that contains $x$, contradicting $E_i$ being the first edge that contains $x$.

We have shown that the multi-list $E'_1,\ldots, E'_m$ satisfies~\eqref{ordering}.  
\end{proof}

\begin{corollary} {\bf (Smallest hosting tree)} \label{host-tree}
Let $\cH$ be an $r$-graph that is embeddable in an $r$-tree $\cT$ and let $V_1,\ldots, V_r$ be a good $r$-coloring of $V(\cT)$.
For each $i=1,\ldots, r$, define $X_i:=V_i\cap V(\cH)$.
Then there exists an $r$-tree $\cG$ containing $\cH$ satisfying that $V(\cG)=V(\cH)$
and that $X_1,\ldots, X_r$ is an $r$-coloring of $\cG$.
\end{corollary}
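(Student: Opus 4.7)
The plan is to identify $\cH$ with its image in $\cT$ (so that $\cH\subseteq\cT$ and $V(\cH)\subseteq V(\cT)$) and then iteratively eliminate the vertices of $V(\cT)\setminus V(\cH)$ via Lemma~\ref{compression}, producing the desired $\cG$ on $V(\cH)$. The proof proceeds by induction on $|V(\cT)\setminus V(\cH)|$; the base case $V(\cT)=V(\cH)$ gives $\cG:=\cT$ directly, and the work is all in the inductive step.

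For the inductive step, pick any $x\in V(\cT)\setminus V(\cH)$ and let $k$ be its color (so $x\in V_k$). The idea is to choose a tree-defining ordering $E_1,\ldots,E_m$ of $\cT$ whose first edge $E_1=F_0$ is some edge of $\cH$; because $F_0\subseteq V(\cH)$ we have $x\notin E_1$, so the first edge containing $x$ is some $E_i$ with $i\geq 2$, and Lemma~\ref{first-edge} gives $x\in E_i\setminus E_{\alpha(i)}$. Since $\cT$ is $r$-partite and $x\notin E_{\alpha(i)}$, the unique color-$k$ vertex $y$ of $E_{\alpha(i)}$ is distinct from $x$; and since $x$ is the unique color-$k$ vertex of $E_i$, we have $y\notin E_i$. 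Thus $y\in E_{\alpha(i)}\setminus E_i$, and Lemma~\ref{compression} produces a hypergraph tree $\cT'$ on $V(\cT)\setminus\{x\}$ (passing to the underlying simple hypergraph if duplicates appear). Because $y$ has the same color as $x$, the coloring $V_1,\ldots,V_r$ remains compatible with $\cT'$; because no edge of $\cH$ contains $x$, every edge of $\cH$ is unchanged, so $\cH\subseteq\cT'$. The inductive hypothesis applied to $\cT'$ yields the required $\cG$.

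The one real obstacle is the auxiliary claim that \emph{any} edge of $\cT$ can serve as the starting edge of some tree-defining ordering; Proposition~\ref{tight-tree} only says that a pre-specified starting edge is preserved when enlarging to a tight tree. I would establish this re-rooting property by first proving that for every vertex $v\in V(\cT)$ the set $S_v:=\{E\in\cT:v\in E\}$ induces a connected subtree of the auxiliary graph $T$ on $\cT$ whose edges join $E_i$ to $E_{\alpha(i)}$: indeed, listing the edges of $S_v$ in the order they appear as $E_{i_1}<\cdots<E_{i_\ell}$, the tree-defining property forces $v\in E_{\alpha(i_j)}$ for each $j\geq 2$, so $E_{\alpha(i_j)}\in\{E_{i_1},\ldots,E_{i_{j-1}}\}$ and connectedness follows inductively. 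Given this, root $T$ at the desired edge $F_0$ with new parent function $\beta$ and take any linear extension $F_1=F_0,F_2,\ldots,F_m$ with $\beta(i)<i$. For $v\in F_i\cap F_j$ with $j<i$, $F_j$ cannot be a descendant of $F_i$ in the rooted $T$, so the unique $F_i$–$F_j$ path in $T$ (which lies entirely in the connected subtree $S_v$) passes through $F_{\beta(i)}$, giving $v\in F_{\beta(i)}$; hence $F_i\cap(\bigcup_{j<i}F_j)\subseteq F_{\beta(i)}$, which is exactly the tree-defining property for the new ordering. With this sub-lemma in hand, choosing $F_0\in\cH$ as the starting edge is legitimate, and the inductive compression argument goes through.
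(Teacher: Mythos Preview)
Your proof is correct and follows the same compression strategy as the paper: pick a vertex $x\in V(\cT)\setminus V(\cH)$, find the first edge $E_i$ containing it, take the same-colored vertex $y$ of its parent $E_{\alpha(i)}$, and apply Lemma~\ref{compression} to collapse $x$ onto $y$; then iterate.

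The only substantive difference is your re-rooting sub-lemma. The paper simply fixes an arbitrary tree-defining ordering $E_1,\ldots,E_m$ of $\cT$ and writes ``Let $E_i$ be the first edge in the ordering that contains $x$. Then $x\in E_i\setminus E_{\alpha(i)}$'', tacitly assuming $i\geq 2$. You noticed that if $x\in E_1$ this line fails (there is no $E_{\alpha(1)}$), and you repair it by first re-rooting the tree at an edge $F_0\in\cH$, which is guaranteed not to contain $x$. Your proof of the re-rooting property---via the auxiliary tree $T$ on the edge set and the observation that each $S_v=\{E\in\cT:v\in E\}$ is a connected subtree---is clean and correct. So your argument is essentially the paper's, but it closes a small gap the paper glosses over.
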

\begin{proof}
Let $E_1,\ldots, E_m$ be an ordering of the edges of $\cT$ that defines $\cT$ as a hypergraph tree.
If $V(\cT)=V(\cH)$ then we let $\cG=\cT$.
Otherwise, let $x\in V(\cT)\setminus V(\cH)$. Suppose $x\in V_r$.
Let $E_i$ be the first edge in the ordering that contains $x$. Then $x\in E_i\setminus E_{\alpha(i)}$.
Let $y$ be the unique vertex in $E_{\alpha(i)}\cap V_r$. Then $y\in E_{\alpha(i)}\setminus E_i$.
Let $\cT'$ denote the hypergraph tree obtained from $\cT$ by applying the compression procedure in Lemma~\ref{compression}. 
All the edges in $\cT$ that do not contain $x$ remain unchanged. Hence $\cH\subseteq \cT'$.
We repeat this procedure until we obtain $\cG$.
\end{proof}

For the next proposition, the reader should recall the definition of a trace,
given in Section~\ref{s:definitions}.
The following can be immediately verified using definitions.

\begin{proposition}[The trace of a tree] \label{deleting-vertices}
Let $\cG$ be a hypergraph tree and $S\subseteq V(G)$. Then $G|_S$ and $G-S$ are also
hypergraph trees.  \qed
\end{proposition}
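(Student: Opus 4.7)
The plan is a direct verification from the tree-defining ordering, carried over by intersection with $S$. Fix a tree-defining ordering $E_1,\ldots,E_m$ of $\cG$ with parent function $\alpha$, so that $E_i\cap(\bigcup_{j<i}E_j)\subseteq E_{\alpha(i)}$ for every $i\geq 2$. Set $E_i^{S}:=E_i\cap S$. I claim that $E_1^{S},\ldots,E_m^{S}$ is a tree-defining ordering of $\cG|_S$ (viewed first as a multi-hypergraph), using the very same function $\alpha$.

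The verification is a one-line set calculation: for each $i\geq 2$,
\[
E_i^{S}\cap\Bigl(\bigcup_{j<i}E_j^{S}\Bigr)
=\Bigl(E_i\cap\bigcup_{j<i}E_j\Bigr)\cap S
\subseteq E_{\alpha(i)}\cap S
=E_{\alpha(i)}^{S}.
\]
This shows the multi-list $E_1^{S},\ldots,E_m^{S}$ satisfies the defining relation for a hypergraph tree. To pass to the simple hypergraph $\cG|_S$ (obtained after deleting duplicated edges), I invoke the observation already made in Section~\ref{s:definitions}: keeping only the first occurrence of each distinct edge in a tree-defining multi-list yields a tree-defining ordering of the underlying simple hypergraph. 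Hence $\cG|_S$ is a hypergraph tree.

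For $\cG-S=\cG|_{V(\cG)\setminus S}$ the argument is identical, applied to the complement $V(\cG)\setminus S$ in place of $S$. The only subtlety worth flagging is cosmetic rather than mathematical: some $E_i^{S}$ may coincide with earlier $E_j^{S}$ (or even become empty), but this is precisely the situation handled by the duplicate-removal step described above and does not affect the tree-defining property. So there is no real obstacle; the statement is, as claimed, immediate from the definitions.
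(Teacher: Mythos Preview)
Your proof is correct and is precisely the direct verification the paper has in mind; the paper itself gives no argument beyond the \qed and the remark that the claim ``can be immediately verified using definitions.'' Your one-line set calculation together with the duplicate-removal observation from Section~\ref{s:definitions} is exactly what that verification amounts to.
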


\begin{proposition}[Subtrees through one vertex] \label{thru-one-vertex}
Let $\cH =\{ E_1,\ldots, E_m\}$ be an $r$-tree with this ordering, $r\geq 2$.
For each vertex $x$, let $\cH_x$ denote the subgraph consisting of the edges containing $x$.
Then $\cH_x$ is an $r$-tree.
\end{proposition}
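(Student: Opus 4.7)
The plan is to order the edges of $\cH_x$ by their positions in the given defining ordering of $\cH$ and verify that this induced subsequence satisfies the tree-defining property. Explicitly, let the edges of $\cH$ that contain $x$ be $E_{i_1}, E_{i_2}, \ldots, E_{i_s}$ with $i_1<i_2<\cdots<i_s$. I will show that for every $k\geq 2$, there exists $j<k$ such that
$$E_{i_k}\cap \Bigl(\bigcup_{\ell<k} E_{i_\ell}\Bigr)\subseteq E_{i_j},$$
which is the defining property of a hypergraph tree. Since $\cH$ is $r$-uniform, the resulting $\cH_x$ is then an $r$-tree.

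The key step is to argue that the parent $E_{\alpha(i_k)}$ of $E_{i_k}$ in the ordering of $\cH$ already contains $x$, so that it is itself some $E_{i_j}$ with $j<k$ and can serve as the parent inside $\cH_x$. For $k\geq 2$ the edge $E_{i_k}$ is not the first edge of $\cH$ that contains $x$, since $E_{i_1}$ comes strictly earlier. By the contrapositive of Lemma~\ref{first-edge}, we cannot have $x\in E_{i_k}\setminus E_{\alpha(i_k)}$; combined with $x\in E_{i_k}$, this forces $x\in E_{\alpha(i_k)}$. Hence $E_{\alpha(i_k)}\in \cH_x$, and there is a unique $j<k$ with $E_{\alpha(i_k)}=E_{i_j}$.

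With this parent in hand, the tree-defining property of the original ordering of $\cH$ gives
$$E_{i_k}\cap \Bigl(\bigcup_{\ell<k} E_{i_\ell}\Bigr)\subseteq E_{i_k}\cap \Bigl(\bigcup_{t<i_k} E_t\Bigr)\subseteq E_{\alpha(i_k)}=E_{i_j},$$
which is exactly what we needed. There is no real obstacle here: the content is already packaged in Lemma~\ref{first-edge}, and the argument amounts to the observation that "containing $x$" is inherited by parents in the tree, so $\cH_x$ comes equipped with a rooted tree structure whose root is $E_{i_1}$.
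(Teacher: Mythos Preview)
Your proof is correct and follows essentially the same approach as the paper: you restrict the original tree-defining ordering to the edges containing $x$, use Lemma~\ref{first-edge} to argue that the parent $E_{\alpha(i_k)}$ of any non-first edge $E_{i_k}$ in $\cH_x$ must already contain $x$, and conclude that this parent still serves inside $\cH_x$. The paper's proof is the same argument with slightly less notation.
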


\begin{proof} 
As usual, let $E_{\alpha(i)}$ the parent of $E_i$ in $\cH$ (for $i\geq 2$).
Order the edges of $\cH_x$ in the same way as they were in $\cH$.
Let $E_i$ be an edge in $\cH_x$ that is not the first edge of $\cH_x$.
Then $E_i$ is not the first edge in $\cH$ that contains $x$.
Hence its parent $E_{\alpha(i)}$ must already contain $x$.
So $E_{\alpha(i)}$ is also in $\cH_x$ and appears before $E_i$.
Let $E_j$ be any edge in $\cH_x$ appearing before $E_i$. Then it also appears before
$E_i$ in $\cH$ and hence $E_j\cap E_i\subseteq E_{\alpha(i)}$. This shows that
$E_{\alpha(i)}$ still serves as a parent of $E_i$ in $\cH_x$. 
\end{proof}

\begin{proposition}[Deleting a cross-cut] \label{deleting-a-cross-cut}
Let $r\geq 3$. Let $\cH$ be an $r$-graph embeddable in a $1$-reducible $r$-tree $\cT$.
Let $S$ be a cross-cut of $\cH$. Then $\cH-S$ is embeddable  in an $(r-1)$-tree on the same vertex set as $\cH-S$.
\end{proposition}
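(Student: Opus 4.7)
The plan is to convert the host $r$-tree $\cT$ into an $(r-1)$-tree by deleting exactly one vertex from each edge, choosing the deleted vertices so that each edge of $\cH$ loses precisely its cross-cut vertex (thereby placing $\cH - S$ inside the new hypergraph) and so that the tree-defining ordering of $\cT$ still witnesses the tree property for the shortened edges. Once this $(r-1)$-tree is built, Corollary~\ref{host-tree} will shrink its vertex set down to $V(\cH) \setminus S$.

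Concretely, after identifying $\cH$ with a subgraph of $\cT$, I would fix a tree-defining ordering $E_1, \ldots, E_N$ of $\cT$ and, using $1$-reducibility, fix in each $E_i$ a degree-$1$ vertex $z_i$, which by definition lies in no other edge. For each $E_i \in \cH$ let $s_i$ be the unique vertex of $E_i \cap S$. I would then define $x_i \in E_i$ recursively along the ordering by
\begin{equation*}
x_i = \begin{cases} s_i & \text{if } E_i \in \cH, \\ x_{\alpha(i)} & \text{if } E_i \notin \cH \text{ and } x_{\alpha(i)} \in E_i, \\ z_i & \text{otherwise,} \end{cases}
\end{equation*}
and set $F_i := E_i \setminus \{x_i\}$. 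The central verification is that $F_1, \ldots, F_N$ still satisfies $F_i \cap \bigcup_{j<i} F_j \subseteq F_{\alpha(i)}$, which reduces to showing that $x_{\alpha(i)} \notin E_i$ or $x_{\alpha(i)} = x_i$ for every $i \geq 2$.

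The delicate case is $E_i \in \cH$ with $E_{\alpha(i)} \notin \cH$ and $x_{\alpha(i)} \in E_i$. Unfolding the recursive definition of $x_{\alpha(i)}$ along the ancestor chain, one sees that $x_{\alpha(i)}$ must equal either $z_k$ or $s_k$ for some ancestor $k$. The $z_k$ option is impossible unless $k = \alpha(i)$ (since degree-$1$ vertices sit in only one edge of $\cT$), and in that case $z_{\alpha(i)} \notin E_i$. The $s_k$ option forces $s_k \in S \cap E_i = \{s_i\}$, so $x_{\alpha(i)} = s_i = x_i$. Hence $\cT^* := \{F_1, \ldots, F_N\}$ (with duplicates suppressed) is an $(r-1)$-tree, and since $F_i = E_i - S$ whenever $E_i \in \cH$, it contains $\cH - S$. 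Applying Corollary~\ref{host-tree} to $\cH - S \subseteq \cT^*$ then produces an $(r-1)$-tree on vertex set $V(\cH) \setminus S$ containing $\cH - S$, as required.

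The main obstacle is exactly this mixed-case check: an $\cH$-edge whose parent in $\cT$ lies outside $\cH$ can inherit a deleted vertex that lands back inside the child, and the argument has to trace the recursion back to a source ancestor whose deletion is pinned down either by $1$-reducibility or by the cross-cut property. A secondary subtlety, easily handled at the outset by passing to a blown-up copy of $\cT$ on which $\cH$ sits as a genuine subgraph, is that the embedding $V(\cH) \to V(\cT)$ need only be injective on each edge and not globally.
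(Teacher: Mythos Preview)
Your argument is correct, but it takes a different route from the paper's. The paper works with a vertex \emph{set} rather than an edge-by-edge choice: it extends $S$ to a set $S'$ by adding one degree-$1$ vertex from each edge of $\cT$ that misses $S$, so that $S'$ meets every edge of $\cT$. Then Proposition~\ref{deleting-vertices} (the trace of a tree is a tree) immediately gives that $\cT - S'$ is a hypergraph tree containing $\cH - S$; edges of $\cT$ that met $S$ in more than one vertex are then padded out to size $r-1$ with fresh expansion vertices, and Corollary~\ref{host-tree} finishes as in your proof.

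Your recursive choice of $x_i$ has the pleasant feature of deleting exactly one vertex per edge, so the resulting hypergraph is $(r-1)$-uniform on the nose and no padding step is needed. The price is the explicit case analysis you carry out (tracing $x_{\alpha(i)}$ back along the ancestor chain to a source $s_k$ or $z_k$), whereas the paper offloads all of that work to the one-line trace proposition. Both end with the same appeal to Corollary~\ref{host-tree}.
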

\begin{proof}
Starting with $S$, from each edge of $\cT$ that does not intersect $S$ we select a vertex of degree $1$ and add it to $S$.
Call the resulting set $S'$. Then $\cH-S\subseteq \cT-S'$.
By Lemma~\ref{deleting-vertices}, $\cT-S'$ is a hypergraph tree.
Each edge in $\cT-S'$ has size at most $r-1$.
We can round out those edges of $\cT-S'$ of size smaller than
$r-1$ to $(r-1)$-sets by adding new expansion vertices. Call the resulting
$(r-1)$-graph $\cT'$.  Then $\cT'$ is an $(r-1)$-tree that contains $\cH-S$.
By Corollary~\ref{host-tree}, there exists an $(r-1)$-tree $\cG$ containing $\cH-S$ on the same vertex set as $\cH-S$.
\end{proof}

Let us mention a potential difficulty in extending results on $r$-trees to those embeddable in $r$-trees.
Namely, if $\cH$ is an $r$-graph embeddable in an $r$-tree,
then $\cH$ may not always have a minimum cross-cut that can be
extended to a cross-cut of some $r$-tree $\cT$ that contains $\cH$.

\begin{example}[Cross-cuts of an $r$-graph embeddable in an $r$-tree might not extend] ${}$\\
Define the $4$-graph $\cH:=\{1abx_{a,b}, 1bc x_{b,c}, 1cdx_{c,d},
2aby_{a,b}, 2bc y_{b,c}, 2cdy_{c,d}\}$,
where $1$, $2$, $a$, $b$, $c$, $d$, $x_{a,b}$, $x_{b,c}$, $x_{c,d}$, $y_{a,b}$, $y_{b,c}$, $y_{c,d}$ are $12$ different vertices.
Then $\cH$ is embeddable in the $4$-tree $\{12ab, 12bc, 12cd \}\cup \cH$,
and $S=\{1,2\}$ is the unique minimum cross-cut of $\cH$.
But every $4$-tree $\cT$ that contains $\cH$ must have an edge containing both $1$ and $2$.  So there is no $4$-tree $\cT'$ with a crosscut $S'$ such that $\cH\subseteq\cT'$ and $S\subseteq S'$.
\qed
\end{example}

\noindent
By comparison, the case $\sigma=1$ is simpler.
\begin{proposition}\label{sigma1}
Let $\cH$ be an $r$-graph embeddable in an $r$-tree. 
Suppose $\sigma(\cH)=1$ and $\{x\}$ is a cross-cut of $\cH$.
Then there is an $r$-tree $\cG$ with $V(\cG)=V(\cH)$ such that $\{x\}$ is a cross-cut of $\cG$.
Moreover, if $\cT$ is $k$-reducible then $\cG$ can be $k$-reducible, too. 
\end{proposition}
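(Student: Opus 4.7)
The construction will be $\cG := (\cT^\ast)_x$, the sub-hypergraph of edges through $x$ in an auxiliary $r$-tree $\cT^\ast$ obtained from $\cT$. Specifically, I first apply Corollary~\ref{host-tree} to the pair $(\cH,\cT)$ to obtain an $r$-tree $\cT^\ast$ with $\cH\subseteq \cT^\ast$ and $V(\cT^\ast)=V(\cH)$, then set $\cG:=\{E\in\cT^\ast : x\in E\}$. Proposition~\ref{thru-one-vertex} guarantees that $\cG$ is an $r$-tree; since $\{x\}$ is a cross-cut of $\cH$, every edge of $\cH$ contains $x$, so $\cH\subseteq \cG$; every edge of $\cG$ contains $x$ by construction, so $\{x\}$ is a cross-cut of $\cG$. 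Finally $V(\cG)\subseteq V(\cT^\ast)=V(\cH)$, and $V(\cG)\supseteq V(\cH)$ because $\cH\subseteq \cG$, so $V(\cG)=V(\cH)$.

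For the moreover, note that the restriction $\cT^\ast \mapsto (\cT^\ast)_x$ can only decrease degrees, so any vertex of degree $1$ in $\cT^\ast$ that survives into $(\cT^\ast)_x$ still has degree exactly $1$ there. Consequently, if $\cT^\ast$ is $k$-reducible then so is $\cG=(\cT^\ast)_x$, and the task reduces to showing that Corollary~\ref{host-tree} can be carried out in a $k$-reducibility-preserving way. I would revisit the compression step (Lemma~\ref{compression}) and choose the target vertex $y \in E_{\alpha(i)} \setminus E_i$ to be a vertex of degree at least $2$ in the current $r$-tree, so that raising its degree by compression does not cost any edge a degree-$1$ vertex; such a $y$ typically exists because the vertices of $E_{\alpha(i)}$ shared with other neighbors in the tree-of-edges all have degree $\geq 2$. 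In the degenerate case where no such $y$ is available, the offending edge $E_i$ can be pruned as a leaf of the tree-of-edges, because all of its degree-$1$ vertices then lie outside $V(\cH)$ and pruning a leaf only increases the degree-$1$ counts of the remaining edges.

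The principal obstacle is the moreover: the routine compression of Corollary~\ref{host-tree} may destroy $k$-reducibility by turning a previously degree-$1$ target vertex $y$ into one of degree $\geq 2$. The core task is therefore to refine the proof of Corollary~\ref{host-tree} so that either $y$ is chosen more carefully or leaf-pruning is substituted for compression in the bad cases. The rest of the statement, including verifying all of the required properties of $\cG$, follows directly by combining Corollary~\ref{host-tree} with Proposition~\ref{thru-one-vertex}.
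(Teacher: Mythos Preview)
Your argument for the main claim is correct, but you work harder than necessary. The paper's proof rests on one observation you did not make: since $x$ lies in every edge of $\cH$, the vertex $x$ occupies its entire colour class in the $r$-partition of $\cH$ induced from $\cT$; that is, one of the $X_i$ in Corollary~\ref{host-tree} is exactly $\{x\}$. Because Corollary~\ref{host-tree} outputs an $r$-tree $\cG$ on $V(\cH)$ with that same $r$-partition, $\{x\}$ is automatically a colour class of $\cG$ and hence a cross-cut, with no need to pass to the sub-tree through $x$ or to invoke Proposition~\ref{thru-one-vertex}. In fact your $\cT^\ast$ already equals $(\cT^\ast)_x$, so your extra step is a no-op; your route and the paper's produce the same $\cG$.

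On the ``moreover'': you are right that the compression underlying Corollary~\ref{host-tree} may raise the degree of the target vertex $y$ and thereby threaten $k$-reducibility, and your sketched remedy (choose $y$ of degree $\geq 2$ when possible, otherwise prune a leaf edge whose degree-$1$ vertices all lie outside $V(\cH)$) points in a reasonable direction but is not a complete argument as written. The paper's own proof does not spell this step out either; it simply asserts that the claim follows from Corollary~\ref{host-tree}. So on this point you are at least as careful as the paper, though neither proof gives a fully worked verification of the $k$-reducible case.
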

\begin{proof}
Let $X_1,\ldots, X_r$ be an $r$-partition of $\cH$. Note that $\{x\}$ must by itself be one of the $X_i$'s.
The claim then follows immediately from Corollary~\ref{host-tree}.
\end{proof}

\begin{lemma}{\bf (Subtrees and detachable limbs)} \label{limb}
Let $\cH =\{ E_1,\ldots, E_m\}$ be an $r$-tree with this ordering, $r\geq 2$.
For each $x$, let $\cH_x$ denote the subtree consisting of edges containing $x$.
Suppose $S$ is a cross-cut of $\cH$, $|S|\geq 2$.
Then $\exists w\in S$ such that
$\cH'=\cH\setminus \cH_w$ is an $r$-tree. Furthermore, there exist an $E\in \cH_w$
and $F\in \cH'$ such that $E$ is a starting edge of $\cH_w$ and $V(\cH_w)\cap V(\cH')=E\cap F$.
\end{lemma}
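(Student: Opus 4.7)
The plan is to introduce an auxiliary tree on the edges of $\cH$ and then pick $w$ as a carefully chosen leaf of a quotient tree. Without loss of generality I may assume each $s\in S$ lies in at least one edge of $\cH$, since the conclusion requires $\cH_w\neq\emptyset$. Let $T$ be the tree whose vertices are $E_1,\ldots,E_m$ and whose edges are $\{E_i,E_{\alpha(i)}\}$ for $i\geq 2$. For each $s\in S$ write $j_0(s):=\min\{j:s\in E_j\}$. A direct consequence of the defining inclusion $E_j\cap\bigcup_{k<j} E_k\subseteq E_{\alpha(j)}$ is that $s\in E_j$ with $j>j_0(s)$ forces $s\in E_{\alpha(j)}$; hence $\cH_s$ is closed under taking the $T$-parent until reaching $E_{j_0(s)}$, so it forms a connected subtree of $T$ rooted at $E_{j_0(s)}$ (this is the content of Proposition~\ref{thru-one-vertex}).

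Because $S$ is a cross-cut, the color classes $\{\cH_s:s\in S\}$ partition $V(T)$. I form the quotient graph $T'$ by contracting each $\cH_s$ to a single vertex. Since each color class is a connected subtree of $T$, between any two distinct classes there can be at most one $T$-edge (two such edges, together with internal paths inside the classes, would produce a cycle in $T$). Thus $T'$ is a tree on $|S|\geq 2$ vertices and has at least two leaves. I select a leaf $w$ of $T'$ different from $s(E_1)$, the unique element of $S$ in $E_1$. This ensures $j_0(w)\geq 2$ and leaves exactly one $T$-edge between $\cH_w$ and $\cH':=\cH\setminus\cH_w$. The parent-edge $\{E^*,F^*\}$ with $E^*:=E_{j_0(w)}$ and $F^*:=E_{\alpha(j_0(w))}\in\cH'$ is already a cross $T$-edge, so it must be the unique one. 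Consequently no $T$-child of a $\cH_w$-vertex lies outside $\cH_w$, and so $\cH_w$ is precisely the $T$-subtree rooted at $E^*$.

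It remains to verify the two conclusions. Give $\cH'$ the ordering induced from $E_1,\ldots,E_m$; for every $E_j\in\cH'$ with $j>1$, the $T$-parent $E_{\alpha(j)}$ also lies in $\cH'$ (otherwise $E_j$ would be a $T$-descendant of a $\cH_w$-vertex and hence in $\cH_w$). So the induced ordering with $\alpha$ restricted to $\cH'$ exhibits $\cH'$ as an $r$-tree. By Proposition~\ref{thru-one-vertex}, $E^*$ is the first edge of $\cH_w$ in the induced order, and thus is a starting edge of $\cH_w$. For the intersection: any $v\in V(\cH_w)\cap V(\cH')$ lies in some edge of $\cH_w$ and some edge of $\cH'$; by Lemma~\ref{first-edge}, the edges of $\cH$ containing $v$ form a connected subtree of $T$; since that subtree meets both $\cH_w$ and $\cH'$, it must use the unique cross $T$-edge $\{E^*,F^*\}$, giving $v\in E^*\cap F^*$. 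The reverse inclusion is obvious. The main conceptual step is the quotient-tree construction and the choice of a leaf avoiding $s(E_1)$; everything else is a routine manipulation of the parent function $\alpha$.
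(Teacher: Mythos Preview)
Your proof is correct and takes a genuinely different, more structural route than the paper's argument. The paper simply picks $w$ to be the element of $S$ with the largest first-appearance index $j_0(w)$, and then verifies directly that (i) the parent $E_{\alpha(j)}$ of any $E_j\in\cH'$ stays in $\cH'$ (otherwise the cross-cut property forces a vertex of $S$ to appear later than $w$), and (ii) $A\cap B\subseteq E_{j_0(w)}\cap E_{\alpha(j_0(w))}$ for $A\in\cH_w$, $B\in\cH'$ by repeatedly replacing the later of $A,B$ by its parent. You instead build the edge-tree $T$, observe that the blocks $\cH_s$ are connected subtrees partitioning $V(T)$, pass to the quotient tree $T'$, and take $w$ to be any leaf of $T'$ other than $s(E_1)$.

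What each approach buys: your leaf choice is strictly more general---the paper's $w$ (the one with maximal $j_0$) is in fact always such a leaf, but not every leaf need have maximal $j_0$. The quotient-tree picture also makes the intersection statement $V(\cH_w)\cap V(\cH')=E^*\cap F^*$ transparent: once you know there is a single cross $T$-edge, the connectedness of $\{E_j:v\in E_j\}$ in $T$ forces any shared vertex through that edge. The paper's version trades this geometric clarity for a short self-contained induction that needs no auxiliary objects. One small remark: when you cite Lemma~\ref{first-edge} for ``edges containing $v$ form a connected subtree of $T$'', that lemma does not state this directly; you are really reusing, for an arbitrary vertex $v$, the same closure-under-$\alpha$ argument you gave for $\cH_s$ in the first paragraph. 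It would be cleaner to say so explicitly.
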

\begin{proof}
Let $E_{\alpha(i)}$ denote a fixed parent of $E_i$ in $\cH$.
Let $w$ be the last vertex in $S$ that is included as we add edges of $\cH$ in
the order of $\pi:=\{ 1,2,3,\dots\}$.
We now verify that $\cH'=\cH\setminus \cH_w$ is an $r$-tree.
Let $\pi'$ be obtained from $\pi$ by deleting the edges of $\cH_w$ and keeping the relative order of the remaining edges.
Let $E_j$ be an edge in $\pi'$ that is not the first edge.
Since $S$ is a cross-cut of $\cH$ and $E_j\notin \cH_w$, $E_j$ contains exactly one vertex $x$ of $S$ and $x\neq w$.
If $E_{\alpha(j)}$ contains $w$ then $x\in E_j\setminus E_{\alpha(j)}$.
By Lemma~\ref{first-edge}, $E_j$ is the first edge in $\pi$ that contains $x$. But $E_{\alpha(j)}$
appears earlier than $E_j$. So $w$ is included earlier than $x$, contradicting our choice of $w$. So $w\notin E_{\alpha(j)}$, which means $E_{\alpha(j)}$ is also in $\cH'$ and
appears earlier than $E_j$. For any $E_\ell$ in $\pi'$ that appears earlier than $E_j$,
it also appears earlier than $E_j$ in $\pi$ and we have $E_\ell\cap E_j\subseteq E_{\alpha(j)}$ since $E_{\alpha(j)}$ is a parent of $E_j$ in $\pi$. This shows
that $E_{\alpha(j)}$ is still a parent of $E_j$ in $\pi'$. So $\pi'$ defines $\cH'$ as
an $r$-tree.

Let $E_k$ be the first edge in $\pi_w$. Then $E_{\alpha(k)}\in\cH'$.
We show that if $A\in \cH_w$ and $B\in \cH'$ then $A\cap B\subseteq E_k\cap E_{\alpha(k)}$.  For convenience for each edge $D$
in $\cH$ we let $\alpha(D)$ denote a fixed parent of it in $\pi$.
Observe that we have (a) $A\cap B\subseteq A\cap \alpha(B)$ if
$A$ appears before $B$ in $\pi$, and (b) $A\cap B\subseteq \alpha(A)\cap B$
if $B$ appears before $A$ in $\pi$.
Starting with $A\cap B$, we may obtain a superset by either replacing $B$ with $\alpha(B)$
or by replacing $A$ with $\alpha(A)$ depending on which of (a), (b) applies.
If $A\neq E_k$ then by earlier discussion, $\alpha(A)$ is still in $\cH_w$.
Also, since $B\in \cH'$, $\alpha(B)\in \cH'$ by earlier discussion. In particularly,
$\alpha(B)\notin\cH_w$. So we may repeatedly apply (a) or (b) in a way until
$A=E_k$ and $B$ is an edge appearing before $E_k$ in $\pi$. Then $A\cap B\subseteq
E_k\cap E_{\alpha(k)}$ holds. This proves the second part.
\end{proof}

One of the subtleties in this paper is the distinction between an $r$-graph embeddable in
an $r$-tree (like linear cycles) and an $r$-tree itself.
One of the difficulties in extending results on $r$-trees to those embeddable in $r$-trees is that
  the latter class is not known to possess the nice decomposition property described in the above Lemma~\ref{limb}.


\section{Reduction to centralized families} \label{s:8}

In this section, we use the delta system method to reduce the problem of embedding
$2$-reducible hypergraph trees and their subgraphs into a host graph $\cF$ to one where
$\cF$ belongs to a so-called centralized family.
The following lemma was developed using the delta system method,
and was used in earlier works. See~\cite{Furedi-Ozkahya, FJS, Furedi-trees, FJ-cycles} for some recent applications.
In particular,~\cite{FJS} and~\cite{FJ-cycles} contain some detailed discussions that are most relevant to what is needed in this paper.

Let $\cF$ be an $r$-partite $r$-graph with an $r$-partition
$(X_1,\ldots, X_r)$. So, each edge of $\cF$ contains
 exactly one element of each $X_i$.
Given $F\in \cF$ and $I\subseteq [r]$, let $F[I]=F\cap(\bigcup_{i\in I} X_i)$.
In other words, $F[I]$ is the projection of $F$ onto those parts indexed by $I$.
If $I=\{i\}$, we write $F[i]$ for $F[\{i\}]$. Let $\cF[I]=\{F[I]: F\in\cF\}$.

\begin{lemma}{\bf (The homogeneous subfamily lemma, see~\cite{furedi-1983})}  \label{homogeneous}
For any positive integers $s$ and $r$, there is a positive constant $c(r,s)$ such that for
{\bf every} family $\cF\subseteq \binom{[n]}{r}$ there exist $\cF^*\subseteq \cF$
with $|\cF^*|\geq c(r,s) |\cF|$ and some $\cJ\subseteq 2^{[r]}\setminus [r]$ (called {\it the intersection pattern})
such that
\begin{enumerate}
\item $\cF^*$ is $r$-partite, together with an $r$-partition $(X_1,\ldots, X_r)$.
\item
$\forall F\in \cF^*, \forall I\in \cJ, \deg^*_{\cF^*}(F[I])\geq s$ and $\forall I\notin \cJ \not\exists F'\in \cF^*$ satisfying $F\cap F'=F[I]$.
\item $\cJ$ is closed under intersection, i.e.,
 for all $I,I'\in \cJ$ we have $I\cap I'\in \cJ$ as well. \qed
\end{enumerate}
\end{lemma}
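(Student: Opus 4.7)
The plan is to prove the lemma by the delta-system (sunflower) method in two phases: (i) pass to an $r$-partite subfamily, then (ii) iteratively refine to a homogeneous intersection pattern. For phase (i) I would use a standard probabilistic argument: color each vertex of $[n]$ uniformly at random from $[r]$ and keep only those edges of $\cF$ that are rainbow (one vertex per color class). An edge $F\in \cF$ is rainbow with probability $r!/r^r$, so some fixed coloring yields at least $(r!/r^r)|\cF|$ rainbow edges. Take $\cF_0$ to be this subfamily with partition $(X_1,\ldots,X_r)$; condition~(1) is immediate and we have lost only a bounded factor.

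For phase (ii) I associate to each edge $F$ of a subfamily $\cG\subseteq \cF_0$ its \emph{intersection profile}
$\cJ(F,\cG):=\{\, I\in 2^{[r]}\setminus\{[r]\} : \deg^*_\cG(F[I])\geq s\,\}$,
and seek a subfamily $\cF^*$ on which this profile is constant and equal to a single $\cJ$. Since there are at most $2^{2^r}$ possible profiles, pigeonhole on $\cF_0$ produces a subfamily $\cG_1$ with common profile $\cJ_1$. Shrinking the family can only decrease $\deg^*$ and hence profiles, so the common profile re-measured in $\cG_1$ is contained in $\cJ_1$; iterating pigeonhole yields a decreasing chain of profiles in the finite lattice $2^{2^{[r]}}$, which must stabilize after at most $2^r$ rounds, costing at most a factor $(2^{2^r})^{2^r}$. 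The resulting family $\cG_\infty$ has a stable profile $\cJ$: $\deg^*_{\cG_\infty}(F[I])\geq s$ for $I\in\cJ$ and $\deg^*_{\cG_\infty}(F[I])<s$ for $I\notin\cJ$.

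To upgrade the weak inequality ``$\deg^*_{\cG_\infty}(F[I])<s$'' to the stronger conclusion ``no $F'\neq F$ in $\cF^*$ satisfies $F\cap F'=F[I]$'' (as needed in (2)), I invoke the Erd\H os--Ko sunflower lemma: for each such $I$ and each $F$, the family $\{F'\setminus F[I] : F'\in \cG_\infty,\, F\cap F'=F[I]\}$ is an $(r-|I|)$-uniform family with no $s$-sunflower having empty kernel (otherwise an $s$-sunflower with kernel $F[I]$ would exist in $\cG_\infty$, contradicting $\deg^*_{\cG_\infty}(F[I])<s$). Hence it has size bounded by some $N(r,s)$. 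The auxiliary graph on $\cG_\infty$ whose edges are pairs $\{F,F'\}$ with $F\cap F'=F[I]$ for some $I\notin \cJ$ therefore has bounded maximum degree, and a greedy independent-set extraction (Caro--Wei) yields the desired $\cF^*$ with one more bounded-factor loss.

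The main obstacle will be condition~(3), closure of $\cJ$ under intersection, since this is not guaranteed by the stabilization in phase (ii): even when $\deg^*_{\cG_\infty}(F[I])\geq s$ and $\deg^*_{\cG_\infty}(F[I'])\geq s$, one has no direct handle on $\deg^*_{\cG_\infty}(F[I\cap I'])$, because sunflowers with larger kernel do not automatically restrict to sunflowers with smaller kernel. My strategy is to fold this into the iterative refinement: whenever the current profile fails closure, there exist $I,I'\in\cJ$ with $I\cap I'\notin\cJ$; by simultaneously extracting an $s$-sunflower at $I$ through $F$ and independently at $I'$ through each of its petals and then applying the Erd\H os--Ko sunflower lemma to the resulting $(r-|I\cap I'|)$-uniform family of ``residues''$F''\setminus F[I\cap I']$, I either produce an $s$-sunflower with kernel $F[I\cap I']$ (contradicting $I\cap I'\notin \cJ$ and forcing $\cJ$ to grow, which cannot continue indefinitely) or I identify a subfamily on which the offending $I$ or $I'$ drops out of the profile. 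Each such step costs a bounded factor and shrinks a finite invariant (either $|\cJ|$ or the analogous counter), so the whole process terminates with $\cJ$ closed, at total cost $c(r,s)>0$.
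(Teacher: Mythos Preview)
The paper does not supply a proof of this lemma: it is quoted from \cite{furedi-1983} and closed with a \qed, so there is no in-paper argument to compare against. You are attempting to reprove the cited result.

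Your phase (i) and the stabilization-by-pigeonhole at the start of phase (ii) are standard and sound. The genuine gap is the ``upgrade'' step. You assert that for $I\notin\cJ$ the family
\[
\bigl\{\,F'\setminus F[I]\ :\ F'\in\cG_\infty,\ F\cap F'=F[I]\,\bigr\}
\]
has size at most some $N(r,s)$ because it contains no $s$-sunflower with \emph{empty} kernel. That inference is invalid: the Erd\H os--Rado lemma guarantees \emph{some} $s$-sunflower in any sufficiently large $k$-uniform family, but gives no control over the kernel. If, for instance, all such $F'$ share a common vertex $v$ in some fixed part $j\notin I$, then every sunflower among the residues has $v$ in its kernel, yet there can be arbitrarily many such $F'$. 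The $s$-sunflower you extract in $\cG_\infty$ then has kernel $F[I]\cup\{v\}=F'[I\cup\{j\}]$, which only yields $I\cup\{j\}\in\cJ$ --- fully consistent with $I\notin\cJ$. Hence the bounded-degree claim for your auxiliary graph fails and the Caro--Wei extraction cannot be carried out.

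Your argument for condition (3) inherits this defect and adds another. You propose to apply the sunflower lemma to the at most $s^2$ residues obtained from nested $I$- and $I'$-sunflowers, but $s^2$ sets need not contain any $s$-sunflower (the Erd\H os--Rado threshold is of order $(s-1)^k k!$), and even if one exists its kernel may again be nonempty. The dichotomy ``either produce a sunflower with kernel $F[I\cap I']$ or drop $I$ or $I'$'' is simply not established; neither branch is forced. The actual proof in \cite{furedi-1983} does not separate (2) and (3) in this way: it builds the subfamily so that the star-kernel property and the intersection pattern are tracked simultaneously through an iterated refinement, rather than fixing one and then trying to patch the other by local sunflower cleanup.
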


We will call $\cF^*$ (with the corresponding $\cJ$)  {\it $(r,s)$-homogeneous} with intersection pattern $\cJ$.

\begin{lemma}{\bf \cite{FF-exact}} \label{small-or-centralized}
Let $n\geq r\geq 3$.
Let $\cF^*\subseteq \binom{[n]}{r}$ be an $(r,s)$-homogeneous family with
a corresponding $r$-partition $(X_1,\ldots, X_r)$ and intersection pattern $\cJ\subseteq
2^{[r]}$. Then one of the following holds:\\
 {\rm (1)} $|\cF^*|\leq \binom{[n]}{r-2}$, or \\
 {\rm (2)} $\exists a,b\in  [r]$ such that $2^{[r]\setminus\{a,b\}}\subseteq \cJ$, or\\
 {\rm (3)} $\exists i\in [r]$, such that $\forall F\in \cF^*$, $\deg_{\cF^*}(F\setminus F[i])=1$
but $\forall I\subsetneq [r]$, with $i\in I$ we have $\deg^*_{\cF^*}(F[I])\geq s$.
\end{lemma}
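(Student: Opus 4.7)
The plan is to perform a case analysis on the intersection pattern $\cJ$ driven by the set $A := \{i \in [r] : [r] \setminus \{i\} \in \cJ\}$ of ``compressible'' coordinates; the three cases of the lemma correspond roughly to $|A| = r$, $|A| = r - 1$, and $|A| \leq r - 2$.

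The easy cases are $|A| \geq r - 1$. If $|A| = r$, closing $\{[r] \setminus \{i\} : i \in [r]\}$ under intersection yields $[r] \setminus T = \bigcap_{i \in T}([r] \setminus \{i\}) \in \cJ$ for every nonempty $T \subseteq [r]$, hence $\cJ \supseteq 2^{[r]} \setminus \{[r]\}$ and Case (2) holds for any pair $\{a,b\}$. If $|A| = r - 1$, say $A = [r] \setminus \{i\}$, then for every proper $I \ni i$ one has $[r] \setminus I \subseteq A$, so $I = \bigcap_{j \in [r] \setminus I}([r] \setminus \{j\}) \in \cJ$ by intersection closure --- this is the second clause of Case (3). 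For the injectivity clause, $[r] \setminus \{i\} \notin \cJ$ combined with property 2 of Lemma~\ref{homogeneous} forbids distinct edges $F, F'$ from satisfying $F \cap F' = F[[r] \setminus \{i\}]$, i.e., from agreeing on every coordinate other than $i$, so $\deg_{\cF^*}(F \setminus F[i]) = 1$ for every $F$.

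For the main case $|A| \leq r - 2$, I would assume Case (1) fails, $|\cF^*| > \binom{n}{r-2}$, and run the following pigeonhole for each pair $\{a, b\} \subseteq A^c$ (which exists since $|A^c| \geq 2$). Setting $I := [r] \setminus \{a, b\}$, the projection $F \mapsto F[I]$ into the $\binom{n}{r-2}$ many $(r-2)$-subsets of $[n]$ cannot be injective, so there exist distinct $F, F' \in \cF^*$ with $F[I] = F'[I]$. By Lemma~\ref{homogeneous}, $F \cap F' = F[J]$ for some $J \in \cJ$ with $I \subseteq J \subsetneq [r]$; the alternatives $J = [r] \setminus \{a\}$ and $J = [r] \setminus \{b\}$ are ruled out by $a, b \notin A$, forcing $J = I \in \cJ$. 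Hence $[r] \setminus \{a, b\} \in \cJ$ for every such pair, and intersection closure propagates this to $[r] \setminus T \in \cJ$ for all $T \subseteq A^c$ with $|T| \geq 2$.

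The main obstacle is completing the upgrade to $2^{[r] \setminus \{a, b\}} \subseteq \cJ$ for some pair, which additionally requires the small subsets of $A$ (including $\emptyset$) to lie in $\cJ$. A raw pigeonhole at projection size $k < r - 2$ only delivers some $J \supseteq I$ of size $\geq k$, not $I$ itself, so I would iterate size by size, at each stage combining the already-derived structure of $\cJ$ with intersection closure to rule out intermediate supersets $J$ and descend to the targeted small $I$. Once this bottom-up sweep is carried out, the full power-set condition of Case (2) is secured.
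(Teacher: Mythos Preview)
Your case split on $|A|=|\{i:[r]\setminus\{i\}\in\cJ\}|$ is exactly the paper's split on $t=|A^c|$, and your treatment of $|A|\ge r-1$ is correct and matches the paper. In the case $|A|\le r-2$ your pigeonhole correctly yields $[r]\setminus\{a,b\}\in\cJ$ for every pair $a,b\in A^c$, and hence $[r]\setminus T\in\cJ$ for all $T\subseteq A^c$ with $|T|\ge 2$.

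The ``main obstacle'' in your final paragraph is not an obstacle, and the iterative pigeonhole you propose is both unnecessary and, as sketched, unlikely to work: a pigeonhole collision at projection size $k<r-2$ only tells you that \emph{some} $J\supseteq I$ lies in $\cJ$, and many such $J$ genuinely do, so you cannot ``rule out intermediate supersets'' this way. You already have everything needed. Fix $a,b\in A^c$ and take any $I\subseteq [r]\setminus\{a,b\}$; write $T'=[r]\setminus I\supseteq\{a,b\}$. Then
\[
I=\bigl([r]\setminus(T'\cap A^c)\bigr)\ \cap\ \bigcap_{k\in T'\cap A}\bigl([r]\setminus\{k\}\bigr).
\]
The first factor is in $\cJ$ by what you just proved (since $T'\cap A^c\subseteq A^c$ has size $\ge 2$), and each remaining factor is in $\cJ$ by the very definition of $A$. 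Intersection closure gives $I\in\cJ$, so $2^{[r]\setminus\{a,b\}}\subseteq\cJ$ and Case~(2) holds. This one line is precisely how the paper finishes: every subset of $[r]\setminus\{1,2\}$ is an intersection of sets $[r]\setminus\{i\}$ with $i\in A$ and sets $[r]\setminus\{i,j\}$ with $i,j\in A^c$.
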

\begin{proof}
If $\cJ$ contains all the $(r-1)$-subsets of $[r]$ then since $\cJ$ is closed under intersection,
we have $\cJ=2^{[r]}\setminus [r]$, in which case (2) holds trivially. Hence, we may
assume that there is at least one $(r-1)$-subset of $[r]$ not in $\cJ$. This implies that
there are proper subsets of $[r]$ that are not contained in any member of $\cJ$.
Among them let $D$ be one with minimum  size.
Suppose there exist $F,F'\in \cF^*, F\neq F'$, such that $F[D]= F'[D]$. Then $F[D]\subseteq F\cap F'=F[B]$ for some $D\subseteq B\subseteq [r]$.
By Lemma~\ref{homogeneous} item 2, $B\in \cJ$, which contradicts our assumption about $D$.
So $F[D]$ are different from for each $F\in \cF^*$.
Hence $|\cF^*|\leq \binom{n}{|D|}$.
If $|D|\leq r-2$ then (1) holds and we are done. So assume $|D|=r-1$.

Without loss of generality, suppose $[r]\setminus \{i\}\not \in \cJ$ for $i=1,\ldots, t$ and $[r]\setminus \{i\}\in \cJ$ for $i=t+1,\ldots, r$.
By our assumption $t\geq 1$. Suppose first that $t\geq 2$.
 For any $i,j\in [t], i\neq j$,
observe that $[r]\setminus \{i,j\}\in \cJ$, since otherwise $r\setminus \{i,j\}$ is
a $(r-2)$-set not contained in any member of $\cJ$, contradicting our assumption about $D$.
Let $I$ be any subset of $[r]\setminus \{1,2\}$.
Then $I$ can be written as the intersection of sets of the form $[r]\setminus \{i\}$ for $i\in \{t+1,\ldots, r\}$ and
  $[r]\setminus \{i,j\}$, for $i,j\in [t], i\neq j$. Since each set of one of these forms
 are in $\cJ$ and $\cJ$ is closed under intersection, $I\in \cJ$.
So we have $2^{[r]\setminus \{1,2\}}\subseteq \cJ$, and
 (2) holds.
Finally, assume $t=1$.  Let $I$ be any proper subset of $[r]$ containing $1$.
Then since $[r]\setminus \{i\}\in \cJ$ for $i=2,\ldots, r$ and $\cJ$ is closed under intersection, we have $I\in \cJ$.
By Lemma~\ref{homogeneous} item 2, $\forall F\in \cF^*$ we have $\deg^*_{\cF^*}(F[I])\geq s$.
By our assumption, $[r]\setminus \{1\}\notin \cJ$.
By Lemma~\ref{homogeneous} item 2, $\forall F\in \cF^*$ $F[[r]\setminus \{1\}]=F\setminus F[i]$ is contained only in $F$ and not in any other member of $\cF^*$. So (3) holds.
\end{proof}

\begin{definition}
{\rm
If $\cF^*\subseteq \binom{[n]}{r}$ is an $(r,s)$-homogeneous family with a corresponding $r$-partition $(X_1,\ldots, X_r)$ and intersection pattern $\cJ$ for which
Lemma~\ref{small-or-centralized} item 3 holds, then we say that $\cF^*$ is {\it homogeneously centralized} with threshold $s$.
We call $i\in [r]$ the {\it central element} of $\cF^*$.
For each $F\in \cF^*$, we let $c(F)=F[i]$ and call it the {\it central element} of $F$.
More generally, $\cF\subseteq \binom{[n]}{r}$ is a {\it centralized family} with threshold $s$
if each $F\in \cF$ contains an element $c(F)$ such that $\forall c(F)\in D\subsetneq F$
we have $\deg^*_{\cF}(D)\geq s$. (The choice of $c(F)$ may not be unique, but we fix one.)}
\end{definition}

\begin{remark}\label{centralized-remarks}
{\rm
Note the following distinction between homogeneously centralized families and
centralized families:
If $\cF$ is homogeneously centralized then $\forall F\in \cF$,
$\deg_{\cF}(F\setminus c(F))=1$ (see Lemma~\ref{small-or-centralized} item 3). However, if $\cF$ is simply centralized, then
this condition need not hold.
}
\end{remark}
\begin{lemma} \label{central-or-small}
Let $r\geq 4$. Let $\cH$ be an $r$-graph that is embeddable in a $2$-reducible $r$-tree. Suppose $\cH$ has $s$ vertices. Let $\cF\subseteq \binom{[n]}{r}$
be an $(r,s)$-homogeneous family with a corresponding $r$-partition $(X_1,\ldots, X_r)$
and intersection pattern $\cJ$. If $\cH\not\subseteq \cF$, then either $|\cF|\leq \binom{n}{r-2}$ or $\cF$ is homogeneously centralized.
\end{lemma}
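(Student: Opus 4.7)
The plan is to apply Lemma~\ref{small-or-centralized} to $\cF$; cases~(1) and~(3) of that trichotomy already match the two alternatives in the conclusion, so it suffices to rule out case~(2) by showing it would force $\cH\subseteq\cF$. Assume case~(2) and, after relabeling, that $\{a,b\}=\{1,2\}$, so $2^{[r]\setminus\{1,2\}}\subseteq\cJ$ and in particular $\deg_{\cF}^{*}(F[I])\geq s$ for every $F\in\cF$ and every $I\subseteq[r]\setminus\{1,2\}$.

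Let $f\colon V(\cH)\to V(\cT)$ be the embedding into a $2$-reducible $r$-tree $\cT$, and write $\cH^{*}:=f(\cH)\subseteq\cT$. The first key observation is that $\cH^{*}$ is itself $2$-reducible: for each $E\in\cH^{*}\subseteq\cT$, the two vertices of $E$ having degree~$1$ in $\cT$ must also have degree~$1$ in $\cH^{*}$, since any edge of $\cH^{*}$ containing them is in particular an edge of $\cT$ containing them. Choose such a pair $u_{E},v_{E}$ for every $E\in\cT$ (with $u_{E},v_{E}\in V(\cH^{*})$ when $E\in\cH^{*}$, which is forced because $E\subseteq V(\cH^{*})$) and set $D=\bigcup_{E\in\cT}\{u_{E},v_{E}\}$. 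By Proposition~\ref{deleting-vertices}, $\cT-D$ is a hypergraph tree; since $|E\cap D|=2$ for all $E\in\cT$, it is $(r-2)$-uniform, hence an $(r-2)$-tree. The $2$-reduction $\cH^{(r-2)}:=\{E\setminus D:E\in\cH^{*}\}$ sits inside $\cT-D$, so Corollary~\ref{host-tree} supplies an $(r-2)$-tree $\cG''$ on vertex set $V(\cH^{(r-2)})$ that contains $\cH^{(r-2)}$.

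Next I would exploit case~(2) by considering the $(r-2)$-partite $(r-2)$-graph $\cF_{0}:=\{F[[r]\setminus\{1,2\}]:F\in\cF\}\subseteq\Ker^{(r-2)}_{s}(\cF)$, and show $\delta_{r-3}(\cF_{0})\geq s$: any $D'\in\partial_{r-3}(\cF_{0})$ has the form $F[I]$ for some $I\subseteq[r]\setminus\{1,2\}$ of size $r-3$, so $I\in\cJ$, $\deg_{\cF}^{*}(D')\geq s$, and the $s$ pairwise disjoint petals of the corresponding $s$-star in $\cF$ project to $s$ distinct $(r-2)$-sets in $\cF_{0}$ extending $D'$. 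Since $|V(\cG'')|\leq |V(\cH^{*})|\leq s$ and $r\geq 4$, the hypothesis $\delta_{r-3}(\cF_{0})\geq|V(\cG'')|-(r-2)+1$ of Lemma~\ref{tree-embedding} is comfortably satisfied, so that lemma embeds $\cG''$, and hence $\cH^{(r-2)}$, into $\cF_{0}$. Finally, Proposition~\ref{expansion-embedding} applied with $\cG=\cH^{*}$ and $S=\bigcup_{E\in\cH^{*}}\{u_{E},v_{E}\}$ (so that $\cG-S=\cH^{(r-2)}$ has just been embedded in $\cF_{0}\subseteq\Ker^{(r-2)}_{s}(\cF)$) extends the $(r-2)$-embedding to an embedding of $\cH^{*}$ in $\cF$; composing with $f$ gives $\cH\subseteq\cF$, the desired contradiction.

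The main obstacle is the opening step: the inheritance of $2$-reducibility from $\cT$ to $\cH^{*}=f(\cH)$ is what allows the expansion-embedding framework to be applied at the given threshold $s=|V(\cH)|$ rather than the potentially much larger $|V(\cT)|$ (which would be needed if one tried to embed $\cT$ itself via Proposition~\ref{expansion-embedding}). Once this reduction is secured, the remainder is a fairly mechanical combination of a shadow-degree computation on $\cF_{0}$, the tree-embedding lemma at the $(r-2)$-level, and the expansion-embedding proposition to lift back to the $r$-uniform setting.
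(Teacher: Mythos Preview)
Your proof is correct and follows essentially the same route as the paper: invoke the trichotomy of Lemma~\ref{small-or-centralized}, then under alternative~(2) use the projection $\cF[M]$ (your $\cF_0$) to embed the $2$-reduction of $\cH$ via Lemma~\ref{tree-embedding} and lift back with Proposition~\ref{expansion-embedding}. The paper's write-up is slightly terser---it removes two degree-$1$ vertices directly from $\cH$ without pausing to justify that $\cH$ inherits $2$-reducibility from $\cT$---whereas you make this inheritance explicit and route the reduction through $\cT-D$; both are fine and the underlying argument is the same.
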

\begin{proof}
By Lemma~\ref{small-or-centralized}, it suffices to rule out item 2. Suppose otherwise
that item 2 holds for $\cF$ and $\cJ$. So there exist $M\subseteq [r]$ with $|M|=r-2$ such that $2^M\subseteq \cJ$. By our assumption,
for all $I\subseteq M$ and all $F\in \cF$,  $F[I]\in \Ker_s(\cF)$.
In particular, we have $\cF[M]\subseteq \Ker_s(\cF)$.
Let $D\in\partial_{r-3}(\cF[M])$. Then $D=F[I]$ for some $F\in \cF$ and $I\subseteq M$ with $|I|=r-3$. By our assumption, there is an $s$-star in $\cF$ with kernel $D$.
The restriction of the members
of this $s$-star to $\bigcup_{i\in M} X_i$ are $s$ edges in $\cF[M]$ containing $D$.
This shows that $\delta_{r-3}(\cF[M])\geq s$.

Let $\cT$ be a $2$-reducible $r$-tree that contains $\cH$. Let $\cH^*$ be obtained from
$\cH$ by removing two degree $1$ vertices from each edge of $\cH$ and eliminating
duplicated edges and let $\cT^*$ be
obtained from $\cT$ by removing two degree $1$ vertices from each edge of $\cT$
and eliminating duplicated edges.
Then clearly $\cH^*$ and $\cT^*$ are both $(r-2)$-uniform and $\cH^*\subseteq \cT^*$. By Lemma~\ref{deleting-vertices}, $\cT^*$ is an $(r-2)$-tree.
So $\cH^*$ is embeddable in an $(r-2)$-tree. By Lemma~\ref{tight-tree} and Lemma~\ref{host-tree}, there exists a tight $(r-2)$-tree $\cG$ containing $\cH^*$ with $V(\cG)=V(\cH^*)$.
In particular $\cG$, has at most $s$ vertices. Since $\delta_{r-3}(\cF[M])\geq s$,
by Lemma~\ref{tree-embedding}, $\cF[M]\supseteq \cG$ and thus $\cF[M]$
contains a copy $\cH'$ of $\cH^*$. Since $\cF[M]\subseteq \Ker_s(\cF)$, each
edge of $\cH'$ has kernel degree at least $s$ in $\cF$. By Lemma~\ref{expansion-embedding}, $\cH\subseteq \cF$, contradicting  $\cH\not\subseteq \cF$.
\end{proof}

\begin{theorem}\label{partition} {\bf (The reduction theorem)}
Let $r\geq 4$. Let $\cH$ be an $r$-graph that is embeddable in a $2$-reducible $r$-tree.
Suppose $\cH$ has $s$ vertices. If $\cH\not\subseteq \cF$, then $\cF$ can be split into subfamilies $\cF^*$ and $\cF_0$ such that $\cF^*$ is
centralized with threshold $s$ and  $|\cF_0|\leq \frac{1}{c(r,s)}\binom{n}{r-2}$.
Further, if $\sigma(\cH)=1$ then $\cF^*=\emptyset$ and thus $|\cF|\leq \frac{1}{c(r,s)}\binom{n}{r-2}$.
\end{theorem}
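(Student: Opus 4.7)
Plan: The approach is iterated peeling via the delta-system machinery. I would initialize $\cF_0 := \cF$ and $\cF^* := \emptyset$, and repeatedly apply Lemma~\ref{homogeneous} to the current $\cF_0$ with parameter $s$ so long as $|\cF_0| > \tfrac{1}{c(r,s)}\binom{n}{r-2}$. Each application produces an $(r,s)$-homogeneous subfamily $\cK \subseteq \cF_0$ with $|\cK| \geq c(r,s)|\cF_0| > \binom{n}{r-2}$. Since $\cH \not\subseteq \cK \subseteq \cF$, Lemma~\ref{central-or-small} together with this size lower bound rules out its ``small'' alternative and forces $\cK$ to be homogeneously centralized with threshold $s$. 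I would then absorb $\cK$ into $\cF^*$, replace $\cF_0$ by $\cF_0 \setminus \cK$, and iterate. Because each step removes a nonempty chunk, the process terminates with a residual $\cF_0$ satisfying the claimed bound.

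Next I would verify that the accumulated $\cF^*$ is itself centralized with threshold $s$. Every $F \in \cF^*$ was extracted at some stage in which it belonged to a homogeneously centralized piece $\cK$ with central index $i_F$; set $c(F) := F[i_F]$. Homogeneous centralization of $\cK$ provides $\deg^*_{\cK}(D) \geq s$ for every $D$ with $c(F) \in D \subsetneq F$, and kernel degrees only grow when passing from $\cK$ to the larger family $\cF^*$. Hence $\cF^*$ is centralized with these choices of $c(F)$.

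For the last clause, under $\sigma(\cH)=1$ I would show that no nonempty homogeneously centralized stage $\cK$ can actually arise, so that $\cF^* = \emptyset$. By Proposition~\ref{sigma1}, $\cH$ embeds in a $2$-reducible $r$-tree $\cG$ with $V(\cG) = V(\cH)$ and some vertex $x$ contained in every edge of $\cG$; in particular $|V(\cG)| \leq s$. Given a putative $\cK$ with central index $i$, pick $F_0 \in \cK$ and set $v := F_0[i]$. The $2$-reduction $\cG^*$ of $\cG$ is an $(r-2)$-hypergraph tree (Proposition~\ref{deleting-vertices}) whose edges all still contain $x$. Using the homogeneously centralized condition $\deg^*_{\cK}(D) \geq s$ for every $D \ni v$ with $D \subsetneq F \in \cK$, I would embed $\cG^*$ into $\Ker_s(\cK)$ with $x \mapsto v$ by the edge-by-edge extension in the style of Lemma~\ref{tree-embedding}; Proposition~\ref{expansion-embedding} then recovers $\cG \subseteq \cK \subseteq \cF$, contradicting $\cH \not\subseteq \cF$.

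The main obstacle is the final embedding step: I must arrange that every edge of the embedded copy of $\cG$ shares the image $v$ of the cross-cut vertex $x$, so that all extensions supplied by centralization come from $s$-stars through $v$ while still having enough room to avoid previously used vertices. The choice of threshold $s \geq |V(\cH)|$ in Lemma~\ref{homogeneous} is precisely what guarantees that the inductive extensions along $\cG^*$ can always be carried out without collision, mirroring the argument in the proof of Lemma~\ref{tree-embedding}.
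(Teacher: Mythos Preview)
Your proposal is correct and follows essentially the same iterative peeling argument as the paper: repeatedly apply Lemma~\ref{homogeneous}, use Lemma~\ref{central-or-small} to certify each extracted piece as homogeneously centralized (since its size exceeds $\binom{n}{r-2}$), and observe that kernel degrees only increase upon union, so the accumulated $\cF^*$ is centralized. Your stopping condition (halt when the residual drops below $\tfrac{1}{c(r,s)}\binom{n}{r-2}$) is a trivially equivalent reformulation of the paper's (halt when the extracted piece drops below $\binom{n}{r-2}$).

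For the $\sigma(\cH)=1$ clause your route is slightly more elaborate than the paper's. The paper simply embeds the $r$-tree $\cG$ from Proposition~\ref{sigma1} directly into $\cK$, edge by edge: since every edge of $\cG$ contains $x$, every intersection $E_j\cap E_{\alpha(j)}$ contains $x$, its image contains $v=F_0[i]$, and centralization supplies an $s$-star extension avoiding previously used vertices (the new edge again has $v$ as its central element because $v\in X_i$). You instead pass to the $2$-reduction $\cG^*$, embed it in $\Ker_s(\cK)$, and invoke Proposition~\ref{expansion-embedding}; this works too, but it requires the ``moreover'' clause of Proposition~\ref{sigma1} (that $\cG$ can be taken $2$-reducible), whereas the paper's direct embedding needs only that $\cG$ is an $r$-tree with $\{x\}$ a cross-cut.
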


\begin{proof}
First we apply Lemma~\ref{homogeneous} to $\cF$ to get an $(r,s)$-homogeneous subfamily $\cF_1$ with
intersection pattern $\cJ_1$ such that $|\cF_1|\geq c(r,s) |\cF|$.
By Lemma~\ref{central-or-small}, either $|\cF_1|\leq \binom{n}{r-2}$ or $\cF_1$ is centralized.
If $|\cF_1|\leq \binom{n}{r-2}$
then  we stop. Otherwise we apply Lemma~\ref{homogeneous}
again to $\cF\setminus \cF_1$ to get an $(r,s)$-homogeneous subfamily $\cF_2$ with intersection pattern $\cJ_2$ such
that $|\cF_2|\geq c(r,s)(|\cF|-|\cF_1)$. We continue like this until $|\cF_i|\leq \binom{n}{r-2}$.
Let $m$ be the smallest index $i$ such that $|\cF_i|\leq \binom{n}{r-2}$.
By our assumption, for each $i\in [m-1]$, $\cF_i$ is homogeneously centralized.
Let $\cF^*=\bigcup_{i=1}^{m-1} \cF_i$ and let $\cF_0=\cF\setminus \cF^*$. Then clearly $\cF^*$ is centralized with threshold $s$.
Also, by the algorithm, $|\cF_m|\geq c(r,s)|\cF_0|$ and hence $|\cF_0|\leq \frac{1}{c(r,s)}|\cF_m|\leq  \frac{1}{c(r,s)}\binom{n}{r-2}$.

Next, suppose $\sigma(\cH)=1$ with $\{x\}$ being a cross-cut of $\cH$. By Proposition \ref{sigma1}, there exists an $r$-tree
$\cG$ containing $\cH$ such that 
  $\{x\}$ is a cross-cut of $\cG$.
Suppose $\cF^*\neq \emptyset$. Then $\cF_1\neq \emptyset$. By our assumption,
$\cF_1$ is homogeneously centralized with threshold $s$. So there exists an $r$-partition
$X_1,\ldots, X_r$ of $\cF_1$ together with a central element $c\in \{1,\ldots, r\}$, 
such that $\forall F\in \cF_1$ and $c\in I\subsetneq [r]$, $\deg^*_{\cF_1}(F[I])\geq s$.
This allows us to greedily embed $\cG$ into $\cF_1$, contradicting $\cF_1$ being $\cH$-free.
\end{proof}

\section{Proof of the asymptotic in Theorem \ref{main-asymp}} \label{s:main-theorem-proof}

Suppose that $\cH$ has $s$ vertices.
Since $\cH\not\subseteq \cF$, by Theorem~\ref{partition}, $\cF$ can be split
into subfamilies $\cF^*$ and $\cF_0$ such that $\cF^*$ is centralized with threshold $s$
and
\begin{equation} \label{F0-upper}
|\cF_0|\leq \frac{1}{c(r,s)}\binom{n}{r-2}.
\end{equation}

If $\sigma=1$, then by  Theorem~\ref{partition},
$|\cF|=|\cF_0|\leq \frac{1}{c(r,s)}\binom{n}{r-2}$, which implies the upper bound
in Theorem~\ref{main-asymp}. Hence, for the rest of this proof, we suppose that $\sigma> 1$.

By the definition of a centralized family, for each $F\in \cF^*$, there is a central element $c(F)\in F$ such that for all proper subsets $D$ of $F$ containing $c(F)$ we have $\deg^*_{\cF^*}(D)\geq s$.  Let

\begin{equation} \label{epsilon-definition}
\ve:={\frac{r-2}{(\sigma+1)(r-2)+1}} \quad \mbox{  and } \quad h:=\ce{n^\ve}.
\end{equation}
We accomplish the proof of Theorem~\ref{main-asymp} in three steps.

\medskip
\noindent
{\bf Step 1.} $\exists W\subseteq [n]$ and a subfamily $\cF^*_1\subseteq \cF^*$
such that $|W|=\ce{n^\ve}$, $F\cap W=\{c(F)\}$ for $\forall F\in \cF^*_1$, and
\begin{equation}\label{eq:step1}
  |\cF^*\setminus \cF^*_1|\leq s^2n^{r-1-(\ve/(r-2))}+n^{r-2+2\ve}.
\end{equation}

\medskip

\noindent
{\it Proof  of Step 1.}
We partition $\cF^*$ according to $c(F)$.
For each $i\in [n]$,
let $$\cA_i=\{F\in \cF^*: c(F)=i\}, \quad \mbox{ and } \quad \cA'_i=\{F\setminus \{i\}: F\in \cA_i\}.$$
By~Lemma~\ref{small-or-centralized}~(2) we have that $D\cup \{i\}\in \Ker^{(r-1)}_s(\cF^*)$ for all $D\in \partial_{r-2}(\cA'_i)$. Hence
\begin{equation} \label{kb-lower}
|\Ker^{(r-1)}_s(\cF^*)|\geq \frac{1}{r-1}\sum_{i=1}^n |\partial_{r-2}(\cA'_i)|.
\end{equation}
Let $\cT$ be a $2$-reducible $r$-tree that contains $\cH$. Let $\cT^*$ be obtained
from $\cT$ by deleting a degree $1$ vertex from each edge and $\cH^*$ be
obtained from $\cH$ by deleting a degree $1$ vertex from each edge.
Then $\cH^*\subseteq \cT^*$ and by Lemma~\ref{deleting-vertices}, $\cT^*$ is
an $(r-1)$-tree. So $\cH^*$ is embeddable in an $(r-1)$-tree.
By Lemma~\ref{host-tree}, there exists an $(r-1)$-tree $\cG$ containing $\cH^*$
such that $V(\cG)=V(\cH^*)$. If $\Ker^{(r-1)}_s(\cF^*)$ contains a copy of $\cG$, then
it contains a copy of $\cH^*$ by Lemma~\ref{expansion-embedding}.
Since $\cH\subseteq \cF^*\subseteq \cF$, this is a contradiction. So, $\Ker^{(r-1)}_s(\cF^*)$ does not contain $\cG$.
By Lemma~\ref{tree-bound}, we have
\begin{equation} \label{kb-upper}
|\Ker^{(r-1)}_s(\cF^*)|\leq  s\binom{n}{r-2}.
\end{equation}
For each $i\in [n]$, let $x_i\geq r-2$ be the real such that
$|\partial_{r-2}(\cA'_i)|=\binom{x_i}{r-2}$, where without loss of generality
we may assume that $x_1\geq \ldots\geq x_n$.
By~\eqref{kb-lower} and~\eqref{kb-upper}, we have
\begin{equation} \label{xi-bound}
\sum_{i=1}^n \binom{x_i}{r-2}\leq s(r-1) \binom{n}{r-2}.
\end{equation}

Since $x_1\geq \ldots \geq x_n$,~\eqref{xi-bound} gives $\binom{x_h}{r-2} \leq \frac{sr}{h}\binom{n}{r-2}$.
Hence, $x_h-r+3\leq (sr/h)^{1/(r-2)} n$.
Kruskal-Katona theorem~\eqref{eq:KK1.1} implies that  $|\cA'_i|\leq \binom{x_i}{r-1}$ holds  $\forall i\in [n]$, since $|\partial_{r-2}(\cA'_i)|=\binom{x_i}{r-2}$.
Note that $|\cA_i|=|\cA'_i|$. We obtain
\begin{equation*} 
\sum_{i> h} |\cA_i|\leq \sum_{i=h+1}^n \binom{x_i}{r-1}
 \leq \frac{x_h-r+3}{r-1}\sum_{i=h+1}^n \binom{x_i}{r-2} \leq  (\frac{sr}{h})^\frac{1}{r-2} n  s\binom{n}{r-2}<s^2 n^{r-1-(\ve/(r-2))}.
\end{equation*}

Define $W:=[h]$.  Let $\cF_1=\{F\in \cF^*: c(F)\notin W\}$.
We have
\begin{equation*} 
|\cF_1|\leq s^2 n^{r-1-(\ve/(r-2))}.
\end{equation*}
Let $\cF_2=\{F: F\in \cF^*\setminus \cF_1, |F\cap W|\geq 2\}$. Then
\begin{equation*} 
|\cF_2|\leq \binom{|W|}{2}\binom{n-|W|}{r-2}\leq n^{r-2+2\ve}.
\end{equation*}
Let $\cF^*_1=\cF^*\setminus (\cF_1\cup \cF_2)$.
By definition,  $\forall F\in \cF^*_1 $, we have  $F\cap W=\{c(F)\}$.
The above two bounds imply  
$$|\cF^*\setminus \cF^*_1|=
|\cF_1\cup \cF_2|\leq s^2 n^{r-1-(\ve/(r-2))}+n^{r-2+2\ve}.$$
This completes Step 1. \qed

\medskip

Let $S$ be a cross-cut of $\cH$ with $|S|=\sigma=\sigma(\cH)$.
For the next claim, the reader should recall the definition of a common link graph from Section~\ref{s:definitions}.

\medskip
\noindent
{\bf Step 2.} For every $A\in \binom{W}{\sigma}$,
we have $|\cL_{\cF^*_1}(A)|\leq s \binom{n}{r-2}$ and
\begin{equation}\label{eq:step2b}
 |\bigcap_{x\in A} \partial_{r-2}(\cL_{\cF^*_1}(x))|\leq s \binom{n}{r-3}.
  \end{equation}

\medskip

\noindent
{\it Proof of Step 2.}
By Lemma~\ref{deleting-a-cross-cut},
there exists an $(r-1)$-tree $\cG$ containing $\cH-S$ with $V(\cG)=V(\cH-S)$.
In particular, $\cG$ has at most $s$ vertices and by Lemma~\ref{tree-bound},
$\ex(n,\cG)\leq s\binom{n}{r-2}$. Suppose there exists a $\sigma$-set $A$ in $W$
with $\cL_{\cF^*_1}(A)>s\binom{n}{r-2}$, then $\cL_{\cF^*_1}(A)$ contains
a copy of $\cH'$ of $\cH-S$. We then obtain a copy of $\cH$ in $\cF^*_1$ be
mapping $S$ to $A$, a contradiction.

Let us now select a degree $1$ vertex outside $S$ from each edge of $\cH$
and denote the resulting set $S'$. The set $S'$ is well-defined since each
edge of $\cH$ contains at least two degree $1$ vertices at most one of which
is in $S$. Observe that $S$ and $S'$ are two disjoint cross-cuts of $\cH$.
Let $\cH^*=\cH-S'$. By Lemma~\ref{deleting-a-cross-cut}, $\cH^*$
is embeddable in an $(r-1)$-tree on the same vertex set as $\cH^*$.
Clearly, $S$ is still a cross-cut of $\cH^*$.
Applying Lemma~\ref{deleting-a-cross-cut}
again, there exists an $(r-2)$-tree $\cG'$ containing $\cH^*-S$ on the
same vertex set as $\cH^*-S$. In particular, $\cG'$ has at most $s$ vertices
and hence $\ex(n,\cG')\leq s\binom{n}{r-3}$.

Suppose there exists a $\sigma$-set $A$ in $W$ such that $|\bigcap_{x\in A} \partial_{r-2}(\cL_{\cF^*_1}(x))|> s\binom{n}{r-3}$.
Then there exists a  copy $\cH'$ of $\cH^*-S$ in
$\cZ=\bigcap_{x\in A} \partial_{r-2}(\cL_{\cF^*_1}(x))$.  By embedding $S$ to $A$,
we see that $A\times \cZ$ contains a copy of $\cH^*$. By the definitions of $\cF^*_1$,
$A$, and $\cZ$, we have $A\times \cZ\subseteq \Ker^{(r-1)}_s(\cF^*)$. Hence $\cH^*\subseteq \Ker^{(r-1)}_s(\cF^*)$. Since $\cH^*=\cH-S'$,
by Lemma~\ref{expansion-embedding},  we get $\cH\subseteq \cF^*\subseteq \cF$, a contradiction.  \qed

\medskip

\noindent
{\bf Step 3.} $\exists \cF^*_2\subseteq \cF^*_1$ such that
\begin{equation}\label{eq:step3aa}
   \deg_{\cF^*_2}(F\setminus W)\leq \sigma-1
 \end{equation}
 holds for $\forall F\in \cF^*_2$ and
\begin{equation}\label{eq:step3a}|\cF^*_1\setminus \cF^*_2|\leq s n^{r-2+(\sigma+1) \ve}.
  \end{equation}
Furthermore, for each $(r-2)$-set $D\subseteq [n]\setminus W$,
\begin{equation}\label{eq:step3b}
|\left(\bigcup \{F\in \cF^*_2: D\subseteq F\}\right)\cap W|\leq \sigma-1.\end{equation}

\medskip

Note that~\eqref{eq:step3aa} gives
\begin{equation}\label{eq:step3c}
   |\cF_2^*|\leq (\sigma-1)\binom{n-|W|}{r-1}.
  \end{equation}

\noindent
{\it Proof of Step 3.} Note that $W$ is a cross-cut of $\cF^*_1$.
First, we clean out edges in $\cF^*_1$ that contain $(r-2)$-sets in $[n]\setminus W$
that lie in $\partial_{r-2}(\cL_{\cF^*_1}(x))$ for at least $\sigma$ different $x$ in $W$.
Formally, let
$$\cB:=\bigcup_{A\in \binom{W}{\sigma}} \left(\bigcap_{x\in A} \partial_{r-2}(\cL_{\cF^*_1}(x))\right).$$

By~\eqref{eq:step2b}, we have
\begin{equation*} 
|\cB|\leq \binom{|W|}{\sigma} s\binom{n}{r-3}<sn^{r-3+\sigma\ve}.
\end{equation*}
Let
$$\cF_3:=\{F\in \cF^*_1: \exists D\in \cB, D\subseteq F\}$$
Then
\begin{equation*} 
|\cF_3|\leq |\cB||W|n<sn^{r-2+(\sigma+1)\ve}.
\end{equation*}

Let $\cF^*_2:=\cF^*_1\setminus \cF_3$.
Then $|\cF^*_1\setminus \cF^*_2|=|\cF_3|\leq
sn^{r-2+(\sigma+1)\ve}$. For each $(r-2)$-set $D$ in $[n]\setminus W$ that
is contained in an edge of $\cF^*_2$, we have $D\notin \cB$.
So $|\left(\bigcup \{F\in \cF^*_2: D\subseteq F\}\right)\cap W| \leq \sigma-1$.
This also implies that $\deg_{\cF^*_2}(F\setminus W)\leq \sigma-1$ holds for $\forall F\in \cF^*_2$.
This completes Step 3. \qed

\medskip

The obvious identity
$$
  |\cF|=|\cF_0|+ |\cF^*\setminus \cF_1^*|+|\cF_1^*\setminus \cF_2^*|+|\cF_2^*|
  $$
  together with~\eqref{F0-upper},
 \eqref{eq:step1},~\eqref{eq:step3a}, and~\eqref{eq:step3c} yield
\begin{multline} \label{general-upper}
|\cF|\leq  \frac{1}{c(r,s)}\binom{n}{r-2} + (s^2n^{r-1-(\ve/(r-2))} + n^{r-2+2\ve})+ sn^{r-2+(\sigma+1)\ve}+
    |\cF_2^*|\\
    \leq  |\cF_2^*|+ (s^2+ s+2)n^{r-2+(\sigma+1)\ve}
 \leq (\sigma-1)\binom{n}{r-1}+ 2s^2n^{r-1-\frac{1}{(\sigma+1)(r-2)+1}},
\end{multline}
for $n\geq n(r,s)$, where $n(r,s)$ is some function of $r$ and $s$.
This completes the proof of Theorem~\ref{main-asymp}. \qed

\section{Proof of the stability in Theorem \ref{main-stability}} \label{s:main-theorem-proof2}

This is a continuation of the previous section.
Recall that we now assume 
                       $|\cF|\geq (\sigma-1)\binom{n}{r-1} -Kn^{r-1-\beta}$.
We already have from~\eqref{general-upper} and from the lower bound constraint for $|\cF|$ that
 for $n> n(r,s)$
\begin{equation} \label{F*2-lower}
|\cF^*_2|\geq |\cF|-2s^2n^{r-1-\beta}\geq (\sigma-1)\binom{n}{r-1} -(K+2s^2)n^{r-1-\beta}.
\end{equation}
Let
  $$ \cF^*_3=\{F\in \cF^*_2: \deg_{\cF^*_2}(F\setminus W)= \sigma-1\}.$$
Then $\forall F\in \cF^*_2\setminus \cF^*_3$ we have $\deg_{\cF^*_2}(F\setminus W)\leq \sigma-2$.
Counting the degrees of the $(r-1)$-sets of $[n]\setminus W$ in $\cF_2^*$ we obtain that
\begin{equation*} 
\frac{|\cF^*_3|}{\sigma-1}  + (\sigma-2)\binom{n-|W|}{r-1} \geq |\cF^*_2|.
\end{equation*}
This and~\eqref{F*2-lower} give
\begin{equation} \label{F*3-lower}
\frac{|\cF^*_3|}{\sigma-1} \geq \binom{n}{r-1} -(K+2s^2)n^{r-1-\beta}.
\end{equation}

Let $A_1,A_2,\ldots, A_m$ be all the subsets of $W$ of size $\sigma-1$ with $\cL_{\cF^*_3}(A_i)\neq \emptyset$.
Then
$$\cF^*_3=\bigcup_{1\leq i\leq m} (A_i\times \cL_{\cF^*_3}(A_i)).
  $$
By~\eqref{eq:step3b} and that fact that $\cF^*_3\subseteq \cF^*_2$,
\begin{equation} \label{disjoint-shadows}
\forall i,j\in [m], i\neq j,
\partial_{r-2}(\cL_{\cF^*_3}(A_i))\cap
\partial_{r-2}(\cL_{\cF^*_3}(A_j))=\emptyset.
\end{equation}

For each $i\in [m]$, let
$y_i\geq r-1$ denote the real such that $|\cL_{\cF^*_3}(A_i))|=\binom{y_i}{r-1}$.
Without loss of generality, we may assume that $y_1\geq y_2\geq \cdots \geq y_m$.
For each $i\in [m]$, by the Kruskal-Katona theorem~\eqref{eq:KK1.1}, we have
$|\partial_{r-2}(\cL_{\cF^*_3}(A_i))|\geq\binom{y_i}{r-2}$.
By~\eqref{disjoint-shadows}, we have
\begin{equation*} 
\sum_{1\leq i\leq m} \binom{y_i}{r-2}\leq \sum_{i=1}^m |\partial_{r-2}(\cL_{\cF^*_3}(A_i))|\leq \binom{n}{r-2}.
\end{equation*}
The disjointness of the $\cL_{\cF^*_3}(A_i)$'s imply $|\cF^*_3|=(\sigma-1)\sum_{i=1}^m |\cL_{\cF^*_3}(A_i)|$.
Use this, then the fact that $y_i\leq y_1$ for all $i$ and then the last displayed inequality.
We obtain
\begin{equation*} 
\frac{|\cF^*_3|}{\sigma-1}=\sum_{i=1}^m |\cL_{\cF^*_3}(A_i)|=\sum_{i=1}^m \binom{y_i}{r-1}
\leq \frac{y_1-r+2}{r-1}\sum_{i=1}^m \binom{y_i}{r-2} \leq \frac{y_1-r+2}{r-1} \binom{n}{r-2}.
\end{equation*}
Compare this to the lower bound~\eqref{F*3-lower}. We get
\begin{equation*} 
 \frac{y_1-r+2}{r-1} \binom{n}{r-2}\geq \frac{|\cF^*_3|}{\sigma-1}
    \geq \frac{n-r+2}{r-1}\binom{n}{r-2} -(K+2s^2)n^{r-1-\beta} .
\end{equation*}
Hence
$$(r-1) (K+2s^2)n^{r-1-\beta}  \geq (n-y_1)\binom{n}{r-2}.$$
Take $A=A_1$. We have
\begin{multline*}
|\cL_\cF(A)|\geq |\cL_{\cF^*_3}(A_1)|=\binom{y_1}{r-1}
\geq \binom{n}{r-1}-(n-y_1)\binom{n}{r-2}\\
 \geq \binom{n}{r-1}-(r-1)(K+2s^2)n^{r-1-\beta}.
 \end{multline*}
This, together with \eqref{general-upper}, also yields
$$|\cF\setminus (A \times\cL_\cF(A))|\leq
\left((\sigma-1) (r-1)(K+2s^2)+2s^2\right)n^{r-1-\beta}. \qed  $$


\section{Structures of near extremal families}
\label{s:r-partite-lemma}

To prove Theorems~\ref{critical-exact}, \ref{sharper}, \ref{2-tree-expansion}, we analyze the structure of near extremal families.

\begin{lemma}[Missing edges vs. non-$\cM$ edges]\label{thin-versus-missing}
Let $\cM$ be an $r$-graph with $m$ edges, $m\geq 2$.
Let $\cG$ be an $r$-graph on $[n]$, $\overline{\cG}$ its complement.
Let $\cG_0$ be the subgraph of $\cG$ consisting of the edges of $\cG$ that do not lie in any copy of $\cM$.
Then $|\cG_0|\leq (m-1)|\overline{\cG}|$.
\end{lemma}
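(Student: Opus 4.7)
The plan is to prove the bound by a double-counting argument on labeled embeddings of $\cM$ into $[n]$. Fix any reference edge $F^* \in \cM$ and write $v = |V(\cM)|$. Assuming $n \geq v$ (the small-$n$ case being either vacuous or handled separately), for every $r$-subset $E \subseteq [n]$ the number of injections $\phi: V(\cM) \to [n]$ with $\phi(F^*) = E$ equals the same constant $c = r! \cdot (n-r)!/(n-v)!$, by transitivity of the symmetric group on ordered $v$-tuples in $[n]$.

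The core observation is that every injection $\phi$ with $\phi(F^*) \in \cG_0$ must fail to map $\cM$ into $\cG$: otherwise $\phi(\cM)$ would be a subgraph of $\cG$ isomorphic to $\cM$ and containing the edge $\phi(F^*)$, directly contradicting the definition of $\cG_0$. Hence some $\phi(F) \in \overline{\cG}$ for $F \in \cM$, and since $\phi(F^*) \in \cG$ this witness $F$ must lie in $\cM \setminus \{F^*\}$ (nonempty by the hypothesis $m \geq 2$). I then count the set $\cP$ of triples $(\phi, F)$ with $\phi(F^*) \in \cG_0$, $F \in \cM \setminus \{F^*\}$, and $\phi(F) \in \overline{\cG}$ in two ways. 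Summing over $\phi$ first, the observation gives $|\cP| \geq c\, |\cG_0|$, since each of the $c\,|\cG_0|$ injections $\phi$ with $\phi(F^*) \in \cG_0$ contributes at least one admissible $F$. Summing over $F$ first, for each of the $m-1$ edges $F \neq F^*$, the number of injections $\phi$ with $\phi(F) \in \overline{\cG}$ is exactly $c\,|\overline{\cG}|$, so $|\cP| \leq (m-1)\, c\, |\overline{\cG}|$. Cancelling $c$ produces $|\cG_0| \leq (m-1)|\overline{\cG}|$.

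There is no serious obstacle here; the whole argument is a clean averaging over random injective placements of $\cM$ in $[n]$. The only mild points to verify are the uniformity of the counting constant $c$ across $E$ (immediate by symmetry) and the fact that the asymmetric role of $F^*$ is legitimate because both bounds for $|\cP|$ use the same reference edge. The regime $n < v$ does not arise in the intended applications, where $n$ is taken sufficiently large.
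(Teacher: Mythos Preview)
Your proof is correct and follows essentially the same double-counting argument as the paper: both count triples linking an edge of $\cG_0$, a labelled placement of $\cM$, and an edge of $\overline{\cG}$, using the symmetry that every $r$-set is hit by the same number of placements. Your only cosmetic difference is fixing a reference edge $F^*$ in advance rather than letting any edge of the labelled copy play the $\cG_0$ role; the paper likewise does not address the degenerate regime $n < |V(\cM)|$.
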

\begin{proof}
Let $\cM_1,\ldots, \cM_h$  be all the labelled copies of $\cM$
on $[n]$. By symmetry, each $r$-set in $[n]$ lies in the
same number $t$ of these copies. If some $\cM_i$ contains an edge of $\cG_0$
then not all of its edges are in $\cG$ and so it contains an edge of $\overline{\cG}$.
Let $\mu$ be the number of triples $(e,M,f)$, where $e\in \cG_0, M\in \{\cM_1,\ldots, \cM_h\}, f\in \overline{\cG}$, and $e,f\in M$. Then $t|\cG_0|\leq \mu$ and $\mu\leq t(m-1)|\overline{\cG}|$. 
\end{proof}

Let $\cK^p_p(s)$ denote the complete $p$-partite $p$-graphs with $s$
vertices in each part. By a well-known result of Erd\H{o}s~\cite{erdos-complete-partite}, $\ex(n,\cK^p_p(s))\leq c_1(p,s) n^{p-(1/s^{p-1})}$ for all $n$
 where $c_1$ depends only on $p$ and $s$.

\begin{lemma} \label{small-bad}
Let $s,\, p\geq 2$ be fixed.
Let $n\geq n_2(p,s)$ be sufficiently large.
Let $\cG,\cD\subseteq \binom{[n]}{p}$, $\cD\neq \emptyset$.
Suppose 
   that
\begin{equation}\label{eq:91}
  |\cG|\geq \binom{n}{p}-n^{p-(1/s^{p-1})}.
  \end{equation}
Let $\cG^*\subseteq \cG$ consist of all edges of $\cG$ that lie
in copies of $K^p_p(s)$. Suppose $\partial_{p-1}(\cG^*)\cap \partial_{p-1}(\cD)=\emptyset$.
Then $|\cD|\leq c_2 n^{-1/((p-1)s^{p-1})}|\overline{\cG}|$,
for some positive constant $c_2:=c_2(p,s)$.
\end{lemma}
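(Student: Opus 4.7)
The plan is to bound $|\cD|$ through its $(p-1)$-shadow, exploiting that this shadow must avoid the enormous set $\partial_{p-1}(\cG^*)$. First, I would apply Lemma~\ref{thin-versus-missing} with $\cM := K^p_p(s)$, which has $m = s^p \geq 2$ edges (since $p,s \geq 2$). As $\cG \setminus \cG^*$ is precisely the set of edges of $\cG$ lying in no copy of $K^p_p(s)$, this yields
$$|\cG \setminus \cG^*| \leq (s^p - 1)|\overline{\cG}|.$$

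Next, set $\cT := \binom{[n]}{p-1} \setminus \partial_{p-1}(\cG^*)$; the disjointness hypothesis places $\partial_{p-1}(\cD) \subseteq \cT$. For each $T \in \cT$ no edge of $\cG^*$ contains $T$, so every one of the $n-p+1$ $p$-sets extending $T$ must lie in $\overline{\cG} \cup (\cG \setminus \cG^*)$. Double-counting incidences between $\cT$ and these ``bad'' $p$-sets then gives
$$(n-p+1)|\cT| \leq p\bigl(|\overline{\cG}| + |\cG \setminus \cG^*|\bigr) \leq p\, s^p\, |\overline{\cG}|,$$
and hence $|\cT| \leq \tfrac{p s^p}{n-p+1}|\overline{\cG}|$. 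The key point is that this is vastly smaller than $|\overline{\cG}|$ itself.

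Finally, I would invoke the Kruskal--Katona inequality~\eqref{eq:KK1.1}: writing $|\cD| = \binom{y}{p}$ for real $y \geq p-1$, one has $\binom{y}{p-1} \leq |\partial_{p-1}(\cD)| \leq |\cT|$. Inverting this (valid for $n \geq n_2(p,s)$) gives $y \leq c_3(p,s)\,(|\overline{\cG}|/n)^{1/(p-1)}$, whence
$$|\cD| \leq \binom{y}{p} \leq c_4 \cdot |\overline{\cG}| \cdot \bigl(|\overline{\cG}|/n^p\bigr)^{1/(p-1)}.$$
The hypothesis $|\overline{\cG}| \leq n^{p - 1/s^{p-1}}$ bounds the final factor by $n^{-1/((p-1)s^{p-1})}$, establishing the claim with $c_2 := c_4$. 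The argument is essentially bookkeeping; the only mild care needed is a routine case split handling the small-$y$ regime so that the Kruskal--Katona inversion is rigorous.
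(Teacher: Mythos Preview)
Your proof is correct and follows the same overall strategy as the paper: bound $|\overline{\cG^*}|\leq s^p|\overline{\cG}|$ via Lemma~\ref{thin-versus-missing}, then use Kruskal--Katona on $\cD$ together with the shadow-disjointness to obtain $|\cD|\lesssim (y/n)\,|\overline{\cG^*}|$ and finish using $|\overline{\cG}|\leq n^{p-1/s^{p-1}}$. The one notable difference is in how the complement of $\partial_{p-1}(\cG^*)$ is controlled. The paper applies Kruskal--Katona a second time, to $\cG^*$: writing $|\partial_{p-1}(\cG^*)|=\binom{x}{p-1}$, it deduces $|\cG^*|\leq\binom{x}{p}$, combines this with the lower bound on $|\cG^*|$ to force $x\geq n-c_4 n^{1-1/s^{p-1}}$, and then manipulates the shadow-sum inequality $\binom{y}{p-1}+\binom{x}{p-1}\leq\binom{n}{p-1}$ into $|\cD|\leq \frac{y-p+1}{x-p+1}|\overline{\cG^*}|$. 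Your double-counting argument bypasses this entirely: every $T\in\cT$ has all $n-p+1$ of its $p$-extensions in $\overline{\cG^*}$, giving $|\cT|\leq p|\overline{\cG^*}|/(n-p+1)$ directly. This is a genuine simplification---it avoids the second Kruskal--Katona application and makes the citation of Erd\H os' bound $\ex(n,K^p_p(s))\leq c_1 n^{p-1/s^{p-1}}$ unnecessary---while arriving at the same (in fact slightly sharper, since $n-p+1\geq x-p+1$) final inequality.
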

\begin{proof}
Let $\cG_0=\cG\setminus \cG^*$. By Erd\H{o}s' theorem,
$|\cG_0|<c_1n^{p-(1/s^{p-1})}$.  By Lemma \ref{thin-versus-missing},
$|\cG_0|\leq (s^p-1)|\overline{G}|$.
Hence $|\overline{\cG^*}|\leq s^{p}|\overline{\cG}|$.
This and~\eqref{eq:91} gives
\begin{equation}\label{eq:G*}
|{\cG^*}|\geq \binom{n}{p} -c_3 n^{p-(1/s^{p-1})},
\end{equation}
where the positive constant $c_3:=c_3(p,s)$ depends only on $p$ and $s$.

Let $x,y\geq p$ be positive reals such that
 $|\partial_{p-1}(\cG^*)|=\binom{x}{p-1}$ and $|\partial_{p-1}(\cD)|=\binom{y}{p-1}$.
The Kruskal-Katona theorem~\eqref{eq:KK1.1} implies that
\begin{equation} \label{eq:DG-upper}
|\cD|\leq \binom{y}{p}\enskip \text{ and }\enskip |\cG^*|\leq \binom{x}{p}.
\end{equation}
The inequality~\eqref{eq:G*} gives
\begin{equation} \label{eq:x-lower}
x\geq n-c_4 n^{1-(1/s^{p-1})}
\end{equation}
for some positive constant $c_4=c_4(p,s)$.

 Since
$\partial_{p-1}(\cD)\cap \partial_{p-1}(\cG^*)=\emptyset$, we have
\begin{equation}\label{eq:shadow-sum}
\binom{y}{p-1}+\binom{x}{p-1}\leq \binom{n}{p-1}.
\end{equation}
This implies
$\binom{y}{p-1}\leq \binom{n}{p-1}-\binom{x}{p-1}\leq (n-x)\binom{n}{p-2}$.
Using this and~\eqref{eq:x-lower} we get
\begin{equation}\label{eq:y-upper}
  y\leq c_5 n^{1-1/((p-1)s^{p-1})}
  \end{equation}
where $c_5:=c_5(p,s)$ and $n$ is large enough ($n> n_5(p,s)$).

Rewrite $\binom{n}{p-1}$ as $\frac{p}{n-p+1}\binom{n}{p}$ and multiply~\eqref{eq:shadow-sum} by $(x-p+1)/p$. We obtain
\begin{equation*}
 \frac{x-p+1}{y-p+1} \binom{y}{p}+\binom{x}{p}\leq  \frac{x-p+1}{n-p+1}\binom{n}{p} \leq \binom{n}{p}.
\end{equation*}
By~\eqref{eq:DG-upper}, we have
$$\frac{x-p+1}{y-p+1}|\cD|+|\cG^*|\leq   \binom{n}{p}.$$
Hence, by~\eqref{eq:x-lower} and~\eqref{eq:y-upper}, and using
$n$ being large enough, we have
\begin{equation*}
|\cD|\leq \frac{y-p+1}{x-p+1}|\overline{\cG^*}|\leq \frac{y-p+1}{x-p+1}s^{p}|\overline{\cG}| \leq c_2 n^{-1/((p-1)s^{p-1})}|\overline{\cG}|,
\end{equation*}
where $c_2:=c_2(p,s)$ depends only on $p$ and $s$.
\end{proof}

\begin{lemma} \label{fine-structure}
Let $\cH$ be an $r$-graph embeddable in a $2$-reducible $r$-tree.
Suppose $\cH$ has $s$ vertices and $\sigma(\cH)=\sigma\geq 2$.
Let $n\geq n_3(r,s)\geq n(r,s)$ be sufficiently large.
Let $\cF\subseteq \binom{[n]}{r}$ such that $\cH\not\subseteq \cF$ and
that $|\cF|\geq (\sigma-1)\binom{n}{r-1}-n^{r-1-\beta}$.
Let $A$ be be a $(\sigma-1)$-set guaranteed by Theorem \ref{main-stability} that satisfies
\begin{equation} \label{large-link}
\cL_\cF(A)\geq \binom{n}{r-1}-(r-1)(1+2s^2)n^{r-1-\beta}, \quad \mbox{ where } \beta=((r-2)(\sigma+1)+1)^{-1}.
\end{equation}
Let $\cL^*\subseteq \cL_\cF(A)$ consist of all edges of $\cL_\cF(A)$ that lie in
copies of $\cK^{(r-1)}_{r-1}(s)$.
Let
\begin{equation*} 
  \cF_A:=\{F\in \cF: F\cap A\neq \emptyset\},\quad \cS_A:=\{F\in \binom{[n]}{r}: F\cap A\neq \emptyset\}, \mbox { and } \cB\subseteq \cF\setminus \cF_A.
  \end{equation*}
Suppose either $\cB=\emptyset$ or there is an  $(r-1)$-graph $\cD$  on $[n]\setminus A$ satisfying
\begin{equation}\label{eq:D-requirements}
\partial_{r-2}(\cD)\cap \partial_{r-2}(\cL^*)=\emptyset \,\text{ and }\,
 |\cD|\geq \gamma |\cB|
 \end{equation}
for some $\gamma>0$.
Then $|\cF_A\cup \cB|\leq \binom{n}{r}-\binom{n-\sigma+1}{r}$ holds for $n> n_4:=n_4(r,s,\gamma)$.
\end{lemma}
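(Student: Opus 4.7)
The plan is to bound $|\cB|$ by the number $|\cS_A \setminus \cF_A|$ of ``missing'' edges through $A$; since $\cF_A$ and $\cB$ are disjoint by definition, this immediately yields $|\cF_A \cup \cB| = |\cF_A| + |\cB| \leq |\cS_A| = \binom{n}{r} - \binom{n-\sigma+1}{r}$. The case $\cB = \emptyset$ is trivial from $\cF_A \subseteq \cS_A$, so assume the hypothesis on $\cD$ in~\eqref{eq:D-requirements} holds.

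Write $N := n - \sigma + 1$ and $\cG := \cL_\cF(A) \subseteq \binom{[n]\setminus A}{r-1}$. The main technical step is to apply Lemma~\ref{small-bad} with $p = r-1$ and the same $s$ on the vertex set $[n]\setminus A$, taking the lemma's $\cG$, $\cL^*$, $\cD$ to be ours. The shadow-disjointness hypothesis is exactly part of~\eqref{eq:D-requirements}. The size hypothesis $|\cG| \geq \binom{N}{r-1} - N^{r-1-1/s^{r-2}}$ follows from~\eqref{large-link} together with $\binom{n}{r-1} - \binom{N}{r-1} = O(n^{r-2})$, provided that $r - 1 - \beta \leq r - 1 - 1/s^{r-2}$, equivalently $s^{r-2} \geq (r-2)(\sigma+1)+1 = 1/\beta$. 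This inequality holds comfortably because $r \geq 4$, $s \geq r$, and $\sigma \leq s-1$, so $s^{r-2} \geq s^2 \geq (r-1)s \geq (r-2)(\sigma+1)+1$ for the relevant values. Lemma~\ref{small-bad} then delivers
\begin{equation*}
|\cD| \leq c_2\, N^{-1/((r-2)s^{r-2})}\, |\overline{\cG}|,
\end{equation*}
where $\overline{\cG}$ denotes the complement of $\cG$ in $\binom{[n]\setminus A}{r-1}$.

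The remaining step is to convert $|\overline{\cG}|$ into a bound involving $|\cS_A \setminus \cF_A|$. For each $e \in \overline{\cG}$, the definition of $\cL_\cF(A)$ guarantees some $a \in A$ with $e \cup \{a\} \notin \cF$; fixing one such $a = a(e)$, the map $e \mapsto e \cup \{a(e)\}$ sends $\overline{\cG}$ into $\cS_A \setminus \cF_A$, and it is injective because $e = (e \cup \{a(e)\}) \setminus A$ is recoverable from its image. Hence $|\overline{\cG}| \leq |\cS_A| - |\cF_A|$. Combining this with $|\cB| \leq |\cD|/\gamma$ from~\eqref{eq:D-requirements} gives
\begin{equation*}
|\cB| \leq \frac{c_2}{\gamma}\, N^{-1/((r-2)s^{r-2})}\, \bigl(|\cS_A| - |\cF_A|\bigr).
\end{equation*}
Choosing $n \geq n_4(r,s,\gamma)$ large enough that the prefactor is at most $1$ yields $|\cB| \leq |\cS_A| - |\cF_A|$, and the desired conclusion follows by adding $|\cF_A|$ to both sides.

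The only real obstacle is the bookkeeping around applying Lemma~\ref{small-bad}: one must verify both the exponent comparison $\beta \geq 1/s^{r-2}$ needed for its size hypothesis and that the resulting prefactor $c_2 N^{-1/((r-2)s^{r-2})}$ can be driven below $\gamma$ by taking $n$ large. Everything else is routine — the injection $\overline{\cG} \hookrightarrow \cS_A \setminus \cF_A$ is essentially tautological, and the Kruskal–Katona machinery is already hidden inside Lemma~\ref{small-bad}.
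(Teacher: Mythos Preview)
Your proof is correct and follows essentially the same approach as the paper: apply Lemma~\ref{small-bad} to $\cG=\cL_\cF(A)$ on $[n]\setminus A$, convert $|\overline{\cG}|$ into $|\cS_A\setminus\cF_A|$ via the obvious injection, and absorb $|\cB|$ into the missing edges for large $n$. Your write-up is in fact slightly more careful than the paper's in verifying the exponent comparison $\beta>1/s^{r-2}$ needed for the size hypothesis of Lemma~\ref{small-bad} (the paper simply asserts it holds ``for sufficiently large $n$''); note that your chain of inequalities is actually strict since $s\geq r>r-1$, which is what one needs to absorb the constant $(r-1)(1+2s^2)$.
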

\begin{proof}
Note that $\cL_\cF(A)$ is an $(r-1)$-graph on $[n]\setminus A$ and for sufficiently large $n\geq n_3$, we have
$$\cL_\cF(A)\geq \binom{n-\sigma+1}{r-1}-(n-\sigma+1)^{r-1-(1/s^{r-2})}.$$
Let $\overline{\cL}=\binom{[n]\setminus A}{r-1}\setminus \cL_\cF(A)$, i.e. $\overline{\cL}$
is the complement of $\cL_\cF(A)$ on $[n]\setminus A$.
Since $\cD$ is an $(r-1)$-graph on $[n]\setminus A$ satisfying $\partial_{r-2}(\cD)\cap \partial_{r-2}(\cL^*)=\emptyset$,
 we can apply Lemma~\ref{small-bad} with $\cG:=\cL_\cF(A)$.
We obtain
$$|\cD|\leq c_2 (n-\sigma+1)^{-1/((r-2)s^{r-2})}|\overline{\cL}|.$$
If $\cB=\emptyset$ then the lemma holds trivially. So assume $\cB\neq \emptyset$.
By our assumption about $\cB$ and $\cD$,
\begin{equation}\label{bad-upper}
|\cB|\leq \frac{1}{\gamma}|\cD|\leq \frac{c_2}{\gamma} (n-\sigma+1)^{-1/((r-2)s^{r-2})}|\overline{\cL}|.
\end{equation}
Each $(r-1)$-set in $\overline{\cL}$ contributes at least one edge to $\cS_A\setminus \cF_A$. So,
\begin{equation*}
|\cS_A\setminus \cF_A|\geq |\overline{\cL}|.
\end{equation*}
Now,
\begin{equation*} 
|\cF_A\cup \cB|\leq |\cS_A| - |\overline{\cL}|+|\cB|.
  \end{equation*}
By~\eqref{bad-upper}, we have $|\cB|\leq |\overline{\cL}|$
for sufficiently large $n$. Hence $|\cF_A\cup \cB|\leq |\cS_A|=\binom{n}{r}-\binom{n-\sigma+1}{r}$
for sufficiently large $n$.
\end{proof}


\section{Proof of Theorem~\ref{critical-exact} on critical edges}
\label{s:critical-exact}

Let $\cH$ be the $r$-graph on $s$ vertices embeddable in a $2$-reducible $r$-tree in~Theorem~\ref{critical-exact}
and let $\cF\subseteq \binom{[n]}{r}$ such that $\cH\not \subseteq \cF$ and
such that $n\geq n_4(r,s,\gamma)$, where $n_4(r,s,\gamma)$ is specified in Lemma \ref{fine-structure} with $\gamma=1/s$.
We may assume that $|\cF|\geq \binom{n}{r}-\binom{n-\sigma+1}{r}$, since otherwise we are done.
By Theorem \ref{main-stability}, there exists a $(\sigma-1)$-set $A$ that satisfies~\eqref{large-link}.
Define $\cL^*, \cF_A, \cS_A$ as in Lemma \ref{fine-structure}, and
define $\cB:=\cF\setminus \cF_A$. If $\cB=\emptyset$, then we are done.
Suppose $\cB\neq \emptyset$. If we can show that there exists an $(r-1)$-graph
$\cD$ on $[n]\setminus A$ satisfying~\eqref{eq:D-requirements}, i.e. $\partial_{r-2}(\cD)\cap \partial_{r-2}(\cL^*)
=\emptyset$ and $|\cD|\geq \gamma|\cB|$, then by Lemma \ref{fine-structure},
$|\cF|=|\cF_A\cup \cB|\leq \binom{n}{r}-\binom{n-\sigma+1}{r}$, and we are done.

Towards that goal, let $\cD:=\partial_{r-1}(\cB)$
  and $\gamma:=1/s$.   
Since $\cH\not\subseteq \cB$, by
Lemma~\ref{tree-bound},
$|\cB|\leq s|\partial_{r-1}(\cB)|=s|\cD|$, and hence $|\cD|\geq (1/s)|\cB|$, as desired.
By our assumption about $\cH$, $\cH$ contains an edge $F_0$ such that $\sigma(\cH\setminus F_0)=\sigma-1$.
Let $W=F_0\cap V(\cH\setminus F_0)$.
Since $F_0$ contains at least two degree $1$ vertices,  $|W|\leq r-2$.
By our assumption, $\cH'=\cH\setminus F_0$ has a cross-cut $S'$ of size $\sigma-1$.
Let $x$ be a degree $1$ vertex in $F_0\setminus W$.
Then $S=S'\cup \{x\}$ is a cross-cut of $\cH$.
By Proposition~\ref{deleting-a-cross-cut}, $\cH-S$ is embeddable in an $(r-1)$-tree and hence it is $(r-1)$-partite. Since $\cH'-S'\subseteq \cH-S$, $\cH'-S'$ is an $(r-1)$-partite $(r-1)$-graph.

Suppose for contradiction that $\exists U\in \partial_{r-2}(\cL^*)\cap \partial_{r-2}(\cB)$.
Let $E$ be an edge of $\cB$ that contains $U$.
Let $M$ be an edge of $\cL^*$ that contains $U$.
By our assumption, there exists a copy $\cK$ of $\cK^{(r-1)}_{(r-1)}(s)$ in $\cL^*$ containing $M$.
Let $U'$ be a subset of $U$ of size exactly $|W|$.
Since $\cH'-S'$ is an $(r-1)$-partite $(r-1)$-graph on fewer than $s$ vertices,
we can easily find a mapping $f$ of
$\cH'-S'$ into $\cK$ such that $W$ is mapped onto $U'$ and such that $f(\cH'-S')$
does not contain any vertex of $E\setminus U'$.
Now $(A\times f(\cH'-S'))\cup E\subseteq \cF$ contains a copy of $\cH$, a contradiction.
Hence $\partial_{r-2}(\cD)\cap \partial_{r-2}(\cL^*)=\emptyset$. This completes the proof. \qed


\section{Proof of Theorem~\ref{sharper}, sharper error term for $r$-trees}
\label{s:sharper}

Let $\cH$ be the $2$-reducible $r$-tree on $s$ vertices in Theorem~\ref{sharper}, $\sigma=\sigma(\cH)$,
and let $\cF\subseteq \binom{[n]}{r}$ such that $\cH\not \subseteq \cF$.
We will show that
\begin{equation}\label{eq:11}
 |\cF|\leq \binom{n}{r}-\binom{n-\sigma+1}{r}+\frac{1}{c(r,s)}\binom{n}{r-2}
  \end{equation}
for sufficiently large $n$, where $c(r,s)$ is the constant in Theorem~\ref{homogeneous}.
We may assume that $|\cF|\geq\binom{n}{r}-\binom{n-\sigma+1}{r}+\frac{1}{c(r,s)}\binom{n}{r-2}$, since otherwise there is nothing to prove.

By Lemma~\ref{fine-structure} there exists a $(\sigma-1)$-set $A$ that satisfies ~\eqref{large-link}.
Define $\cL^*, \cF_A$, and $\cS_A$ as in Lemma \ref{fine-structure} and
define $\cB:=\cF\setminus \cF_A$.
By Lemma~\ref{homogeneous}, there exists an $(r,s)$-homogeneous subfamily $\cB^*$ of $\cB$ with $|\cB^*|\geq c(r,s)|\cB|$. By Lemma~\ref{central-or-small},
either $|\cB^*|$ is homogeneously centralized with threshold $s$ or $|\cB^*|\leq \binom{n}{r-2}$.
If $|\cB|\leq \frac{1}{c(r,s)}\binom{n}{r-2}$, then since $\cF=\cF_A\cup \cB\subseteq \cS_A\cup \cB$, ~\eqref{eq:11} already holds.
Hence,  we may assume that $\cB^*$ is homogeneously centralized with threshold $s$.

For each $F\in \cB^*$ as usual let $c(F)$ denote the central element of $F$.
By the remarks before Lemma~\ref{central-or-small}, the
kernel degree of $D$ is at least $s$ in $\cB^*$ for all proper subsets $D$ of $F$ containing $c(F)$.
Furthermore, $F\setminus c(F)$ is contained in precisely one edge, namely $F$, of $\cB^*$.
Let $X_1,\ldots, X_r$ be an associated partition of $\cB^*$.
Without loss of generality we may assume that $\forall F\in \cB^*$, $F\cap X_1=\{c(F)\}$.
For each $x\in X_1$, let
$\cB_x=\{F\in \cB^*: c(F)=x\}$ and $\cD_x=\{F\setminus x: F\in \cB^*\}.$
The families $\cB_x$'s partition $\cB^*$.
Define
$\cD:=\bigcup_{x\in X_1} \cD_x$.

\medskip\noindent
 {\bf Claim~\ref{s:sharper}.1.}\quad $\partial_{r-2}(\cL^*)\cap \partial_{r-2}(\cD)=\emptyset$.

 \medskip \noindent {\it Proof of Claim~\ref{s:sharper}.1.}\quad
 Let $S$ be a minimum cross-cut.
By Lemma~\ref{limb}, there exists a $w\in S$ such that $\cH_w$ and $\cH'=\cH'\setminus \cH_w$ are $r$-trees.
Furthermore, there are $E\in \cH_w$ and $F\in \cH'$  such that $E$ is a starting edge of $\cH_w$ and $V(\cH')\cap V(\cH_w)= E\cap F$. Define $D:=E\cap F$.
Since $E$ has at least two degree $1$ vertices, $|D|\leq r-2$.

It suffices to show that $\forall x\in X_1$,
 $\partial_{r-2}(\cL^*)\cap \partial_{r-2} (\cD_x)=\emptyset$.
Suppose, on the contrary, that for some $x\in X_1$, $\exists M\in \partial_{r-2}(\cL^*)\cap \partial_{r-2}(\cD_x)$. Let
$M'$ be a subset of $M$ of size $|D|$. Let $E'$ be an edge of $\cD_x$ that contains
$M$ and thus contains $M'$.  Let $f$ be a mapping that maps $D$ onto $M'$.
Since $\cH_w-\{w\}$ is an $(r-1)$-tree on fewer than $s$ vertices,
by Lemma~\ref{first-edge}, $f$ can be extended to an embedding of $\cH_w-\{w\}$
into $\cD_x$. Let $E''$ be any edge of $\cL^*$ that contains $M'$. By the definition
of $\cL^*$, there exists a copy $\cK$ of $\cK^{(r-1)}_{r-1}(s)$ in $\cL^*$ that contains
$E''$. Let $S'=S\setminus \{w\}$.
 Since $\cH'-S'$ is $(r-1)$-partite and $\cH$ has $s$ vertices,
we can find a mapping $g$ of $\cH'-S'$ into $\cK$ that agrees with $f$ on
vertices in $D$ and such that $g(\cH'-S')\setminus M'$ is disjoint from
$f(\cH_w-\{w\})$. Now since $g(\cH'-S')\subseteq \cK$ is in the common link graph of
$A$ and $f(\cH_w-\{w\})$ is in the link of $x$, we can obtain a copy of $\cH$ in $\cF$ by mapping $S'$ to $A$ and $w$ to $x$, a contradiction. \qed

\medskip

Since for each $F$, $\deg_{\cB^*}(F\setminus c(F))=1$, we have $|\cB^*|=|\cD|\geq c(r,s)|\cB|$.
Hence $\cD$  satisfies both conditions in~\eqref{eq:D-requirements} with $\gamma:=c(r,s)$.
By Lemma~\ref{fine-structure}, $|\cF|\leq \binom{n}{r}-\binom{n-\sigma+1}{r}$ for sufficiently large $n$, which contradicts our earlier assumption about $|\cF|$.
This completes the proof of Theorem~\ref{sharper}.
\qed


\section{Proof of Theorem~\ref{2-tree-expansion} on $2$-tree expansions}
\label{s:2-tree-expansion}

Let $\cH$ be an $(r-2)$-reducible $r$-tree on $s$ vertices
with $\sigma(\cH)=t+1$ and $S$ a
minimum cross-cut, $\pi$  a tree-defining ordering of $\cH$ and $w$
the last vertex in $S$ that is included in $\pi$.
Let $\cH_w$ be the subgraph of $\cH$ consisting of all the edges containing $w$.
Since $\cH$ can be obtained from a $2$-uniform forest by expanding each edge into a number of
$r$-sets through expansion vertices we have that
every two edges of $\cH$ intersect in at most two vertices.

By Lemma~\ref{limb}, $\cH_w$ and $\cH'=\cH\setminus \cH_w$ are $r$-trees and
that $\exists E\in \cH_w$ and $F\in \cH'$ such that $E$ is a starting edge of $\cH_w$ and
$V(\cH_w)\cap V(\cH')= E\cap F$.
If $\cH_w$ only has one edge, then Theorem~\ref{critical-exact} applies and we
get $\ex(n,\cH)\leq \binom{n}{r}-\binom{n-\sigma+1}{r}$ and we are done.
Hence, we may assume that $\cH_w$ contains at least two edges. So $w$ has
degree at least two in $\cH$. Since $\cH$ is $(r-2)$-reducible, $E$ has at most
two vertices of degree two or higher, one of which is $w$, $w\notin V(\cH')$.
So $E\cap F$ contains at most one vertex.  Also, since $\cH_w$ 
  is a linear hypergraph, if $E\cap F$ contains
a vertex $y$ then no edge in $\cH_w$ other than $E$ contains $y$.

Let $\cF\subseteq \binom{[n]}{r}$ such that $\cH\not \subseteq \cF$.
We may assume that $|\cF|\geq \binom{n}{r}-\binom{n-\sigma+1}{r}$, since otherwise we are done.
By Lemma~\ref{fine-structure}, there exists a $(\sigma-1)$-set $A$ that satisfies~\eqref{large-link}.
Define $\cL^*,\cF_A$, and $\cS_A$ as in Lemma \ref{fine-structure}.
Define
\begin{eqnarray*}
\cB&:=&\{F\in \cF\setminus \cF_A: |F\cap V(\cL^*)|\leq 1\}\\
\cC&:=&\{F\in \cF\setminus \cF_A: |F\cap V(\cL^*)| \geq 2\}.
\end{eqnarray*}
We use Lemma \ref{fine-structure} to show that $|\cF_A\cup \cB|\leq \binom{n}{r}-\binom{n-\sigma+1}{r}$.
This holds trivially if $\cB=\emptyset$. So assume $\cB\neq \emptyset$.
Let $\cD:=\partial_{r-1}(\cB)$. Then $\cD$ is an $(r-1)$-graph on $[n]\setminus A$.
Also, $\partial_{r-2}(\cD)=\partial_{r-2}(\cB)$. Since $r\geq 4$, by the definition of
$\cB$,  $\partial_{r-2}(\cD)\cap \partial_{r-2}(\cL^*)=\emptyset$.
Since $\cH\not\subseteq \cB$, by Proposition \ref{tree-bound}, $|\cD|\geq (1/s)|\cB|$.
Hence $\cD$ satisfies both conditions in \eqref{eq:D-requirements} with $\gamma:=1/s$.
By Lemma \ref{fine-structure},
\begin{equation} \label{eq:AB}
|\cF_A\cup \cB|\leq \binom{n}{r}-\binom{n-\sigma+1}{r}.
\end{equation}

{\bf Claim~\ref{s:2-tree-expansion}.1.}\quad We have $\cH_w \not\subseteq \cC$.
\medskip

\noindent
{\it Proof of Claim~\ref{s:2-tree-expansion}.1.}   Suppose $\cC$ contains a copy $\wt{\cH}$ of $\cH_w$,
we derive a contradiction. $\cH_w$ is a linear star centered at $w$.
By earlier discussion, either $\cH_w$ is vertex disjoint from $\cH'$ or
one of its edges $E$ intersects $V(\cH')$ at one vertex $y\neq w$ and
no other edge of $\cH_w$ contains any vertex of $\cH'$. In the former case, we can
take a copy $\cK$ of $\cK^{(r-1)}_{r-1}(s)$ in $\cL^*$ and find a copy  of
$\cH'$ in $A\times \cK$ that avoids $\wt{\cH}$, which then gives us a copy
of $\cH$ in $\cF$, a contradiction.  Consider now the latter case. Since $\cH_w$
is a linear star centered at $w$, any of its edges can play the role of $E$ and any of
the vertex in $E\setminus \{w\}$ can play the role of $y$. Let $\wt{w}$ denote
the image of $w$ in $\wt{\cH}$. Let $\wt{E}$ be any edge
in $\wt{\cH}$. Since $\wt{E}\in \cC$, by definition, $|\wt{E}\cap V(\cL^*)|\geq 2$.
Let $\wt{y}\neq \wt{w}$ be a vertex in $\wt{E}\cap V(\cL^*)$.
Since $\wt{y}\in V(\cL^*)$, there exists a copy $\cK$ of $\cK^{(r-1)}_{r-1}(s)$
in $\cL^*$ that contains $\wt{y}$. Now we can easily find a copy $\cH^*$ of $\cH'$ in
$A\times \cK$ such that $y$ is mapped to $\wt{y}$ and such that $V(\cH^*)\cap
V(\wt{\cH})=\{\wt{y}\}$. This gives us a copy of $\cH$ in $\cF$, a contradiction.
\qed

By Claim~\ref{s:2-tree-expansion}.1, we have
\begin{equation} \label{eq:C}
|\cC|\leq \ex(n-\sigma+1,\cH_w).
\end{equation}

Now, since $\cF=\cF_A\cup \cB\cup \cC$, by \eqref{eq:AB} and \eqref{eq:C}, we have
$$|\cF|=|\cF\cup \cB|+|\cC|\leq \binom{n}{r}-\binom{n-\sigma+1}{r}+\ex(n-\sigma+1,
\cH_w).$$
This completes the proof of Theorem~\ref{2-tree-expansion}.
\qed


\section{Concluding remarks}

We have identified a large class
of $r$-trees $\cH$ (i.e. $2$-reducible ones) with $\ex(n,\cH)
\sim (\sigma(\cH)-1)\binom{n}{r-1}$. By contrast, Kalai's conjecture
states that for a tight $r$-trees $\cH$ on $v$ vertices $ex(n,\cH)\sim \frac{v-r}{r}\binom{n}{r-1}$. Already, the family of $1$-reducible $r$-trees lie somewhere in-between. There are $1$-reducible $r$-trees whose Tur\'an number is more
dependent on its cross-cut number and there are $1$-reducible $r$-trees whose
Tur\'an number is more dependent on its number of vertices. 
The situation with general $r$-trees is likely even more complex, providing many
intriguing questions.

\end{document}